\newtheorem{theorem}{Theorem}[section]
\newtheorem{lemma}[theorem]{Lemma}
\newtheorem{proposition}[theorem]{Proposition}
\newtheorem{corollary}[theorem]{Corollary}
\newtheorem{definition}[theorem]{Definition}
\newtheorem{qn}[theorem]{Question}
\theoremstyle{remark}
\newtheorem{remark}[theorem]{Remark}
\newtheorem{example}[theorem]{Example}
\numberwithin{equation}{section}
\newcommand{\ZZ}{\mathbb{Z}}
\newcommand{\QQ}{\mathbb{Q}}
\newcommand{\CC}{\mathbb{C}}
\begin{document}
\title{Bounding the volumes of singular Fano threefolds}
\author[Ching-Jui\ Lai]{Ching-Jui Lai}
\begin{abstract} Let $(X,\Delta)$ be an $n$-dimensional $\epsilon$-klt log $\QQ$-Fano pair. 
We give an upper bound for the volume ${\rm Vol}(-(K_X+\Delta))=(-(K_X+\Delta))^n$ when $n=2$ 
or $n=3$ and $X$ is \mbox{$\QQ$-factorial} of $\rho(X)=1$. This bound is essentially sharp 
for $n=2$. Existence of an upper bound for anticanonical volumes is related the Borisov-Alexeev-Borisov 
Conjecture which asserts boundedness of the set of $\epsilon$-klt log $\QQ$-Fano varieties 
of a given dimension $n$. 
\end{abstract}
\maketitle

Throughout this article, we work over field of complex numbers $\CC$. We recall the definition 
of singularities of pairs and log $\QQ$-Fano pairs. 

\begin{definition}\label{epsilon} A pair $(X,\Delta)$ consists of a normal projective variety 
$X$ and a boundary $\Delta$, i.e., a $\QQ$-divisor $\Delta$ with coefficients in $[0,1]$, such that 
$K_X+\Delta$ is $\QQ$-Cartier. Let $\pi:Y\rightarrow X$ be a log resolution 
of $(X,\Delta)$, the discrepancy $a(E,X,\Delta)$ of a divisor $E$ on $Y$ with respect to the 
pair $(X,\Delta)$ is defined by \mbox{$a(E,X,\Delta)={\rm mult}_E(K_Y-\pi^*(K_X+\Delta))$}. 
We say that $(X,\Delta)$ has only terminal (resp. canonical) singularities if $a(E,X,\Delta)>0$ 
(resp. $\geq0$) for any $\pi$-exceptional divisor $E$ on $Y$. We say that $(X,\Delta)$ 
is klt (resp. $\epsilon$-klt for some $0<\epsilon<1$) if $a(E,X,\Delta)>-1$ (resp. $>-1+\epsilon$) 
for any divisor $E$ on $Y$. Note that smaller $\epsilon$ corresponds to worse singularities. 

A pair $(X,\Delta)$ is (weak) log $\QQ$-Fano if the $\QQ$-Cartier divisor $-(K_X+\Delta)$ 
is ample (resp. nef and big). 
\end{definition}

For a klt pair $(X,\Delta)$ with $\kappa(K_X+\Delta)=-\infty$, according to the log minimal model program, 
there exists a birational map $\phi:X\dashrightarrow Y$ and a morphism $Y\rightarrow Z$ 
such that for $\Delta'=\phi_*\Delta$, the pair $(Y_z,\Delta'_z)$ is log $\QQ$-Fano 
with $\rho(Y_z)=1$ for general $z\in Z$. In particular, log $\QQ$-Fano pairs are the building blocks 
for pairs with negative Kodaira dimension. It is also expected that the set of mildly 
singular $\QQ$-Fano varieties is bounded. 

\begin{definition}\label{bd} We say that a collection of varieties $\{X_\lambda\}_{\lambda\in\Lambda}$ 
is bounded if there exists $h:\mathcal{X}\rightarrow S$ a morphism of finite type of Noetherian 
schemes such that for each $X_\lambda$, $X_\lambda\cong\mathcal{X}_s$ for some $s\in S$. 	
\end{definition}

For example, the set of all the $n$-dimensional smooth Fano manifolds is bounded by \cite{KMMc}. 
Boundedness is also known for terminal $\QQ$-Fano $\QQ$-factorial threefolds 
of Picard number one by \cite{Kaw:fano} and for canonical $\QQ$-Fano threefolds by \cite{KMMT}. 
However, if one considers the set of all klt $\QQ$-Fano varieties with Picard number one of 
a given dimension, \cite{Lin} and \cite{Oka:fano} have shown that birational boundedness fails. 
The problem is that the category of klt singularities is too big to be bounded since, for example, it contains 
finite quotients of arbitrarily large order. To get boundedness, one restricts to a smaller 
class of singularities, known as $\epsilon$-klt singularities. Precisely we have the following 
conjecture due to A. Borisov, L. Borisov, and V. Alexeev, which is still open in dimension three 
and higher.

\theoremstyle{conj}
\newtheorem*{conj}{\textbf{Borisov-Alexeev-Borisov Conjecture}}
\begin{conj}\label{BAB} Fix $0<\epsilon<1$, an integer $n>0$, and consider the set of all 
$n$-dimensional $\epsilon$-klt log $\QQ$-Fano pairs $(X,\Delta)$. The set of underlying 
varieties $\{X\}$ is bounded. 
\end{conj}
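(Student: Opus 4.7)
The plan is to reduce boundedness of $\{X\}$ to two separate quantitative estimates depending only on $n$ and $\epsilon$: an upper bound $V=V(n,\epsilon)$ on the anticanonical volume ${\rm Vol}(-(K_X+\Delta))$, and an upper bound $I=I(n,\epsilon)$ on the Cartier index of some fixed multiple of $-(K_X+\Delta)$. Granted both, a Matsusaka-type theorem in the $\QQ$-Cartier setting produces an integer $N=N(n,\epsilon)$ such that $-N(K_X+\Delta)$ is very ample with degree at most $N^nV$; each such $X$ then embeds into a fixed $\mathbb{P}^M$ with Hilbert polynomial of bounded degree and bounded leading coefficient, so $\{X\}$ is parametrized by finitely many components of a suitable Hilbert scheme, which is exactly boundedness in the sense of Definition~\ref{bd}.

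For the volume estimate I would argue by induction on $n$. After running an MMP on $-(K_X+\Delta)$ one reduces to the case $\rho(X)=1$, which is the setting in which the main results of the present paper give the bound in dimensions $2$ and $3$. To push the induction to higher dimension one would like to produce, for some $m=m(n,\epsilon)$, an effective $\QQ$-divisor $D\sim_\QQ -m(K_X+\Delta)$ with large multiplicity at a general point; tie-breaking would then isolate a log canonical center $W\subsetneq X$ of lower dimension which, via Kawamata's subadjunction formula, inherits an $\epsilon'$-klt log $\QQ$-Fano structure with $\epsilon'=\epsilon'(n,\epsilon)>0$, allowing the inductive hypothesis to control ${\rm Vol}(-(K_W+\Delta_W))$ and hence $V$.

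For the index estimate the natural approach is via Shokurov's theory of bounded complements combined with an ACC statement for log canonical thresholds on $\epsilon$-klt pairs: once $V$ is known, one constructs an $N$-complement of $K_X+\Delta$ of index bounded by $N=N(n,\epsilon)$, and the $\epsilon$-klt hypothesis, together with ACC, prevents the tower-of-finite-quotients obstruction exhibited in \cite{Lin,Oka:fano}.

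The hardest step is the inductive input in the volume bound: constructing an effective $\QQ$-divisor with large multiplicity at a general point, in a $\QQ$-linear system proportional to $-m(K_X+\Delta)$ for some $m$ depending only on $n$ and $\epsilon$, is essentially an effective non-vanishing statement whose presently known proofs either assume boundedness of Fano varieties in lower dimension or invoke deep arguments on the behavior of lc thresholds in families. In low dimension the rigidity of Mori fiber spaces and the explicit classification of Fano threefold contractions substitute for this missing input, which is why the conjecture remains open beyond the range treated by the present paper.
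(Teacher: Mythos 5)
The statement you were asked to prove is presented in this paper as a \emph{conjecture}, and the paper itself notes explicitly that it is still open in dimension three and higher. There is therefore no proof in the paper to compare yours against; the paper supplies only partial results towards it, namely Theorems A and B (volume bounds for surfaces and, under $\QQ$-factorial and $\rho(X)=1$ hypotheses, threefolds) and Proposition~\ref{bdd}, which conditionally reduces boundedness to the volume estimate~$(ii)$ and the Cartier-index estimate~$(i)$ in the special cases $\Delta=0$ or $\rho(X)=1$. Your opening reduction (boundedness from a volume bound plus an index bound, via effective very ampleness and Hilbert schemes) is essentially Proposition~\ref{bdd}, and the rest of your sketch outlines what is substantially the strategy that was later carried out in the literature after this paper appeared; so as a roadmap it is sound.

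However, the proposal is not a proof: both of its substantive inputs are asserted rather than established, and the places you flag as ``hard'' are exactly where the argument lacks content. Concretely: (a) the claim that a minimal lc center $W$ ``inherits an $\epsilon'$-klt log $\QQ$-Fano structure with $\epsilon'=\epsilon'(n,\epsilon)>0$'' is the crux of the induction, and Kawamata's subadjunction does not by itself give any control on the discrepancies of the induced boundary on $W$ — passing to non-klt centers can destroy the $\epsilon$-klt condition entirely, which is one of the central difficulties this paper is trying to circumvent in dimensions two and three; (b) the step ``after running an MMP \dots one reduces to $\rho(X)=1$'' requires a volume comparison across MMP steps and a justification that the weak log Fano property persists, neither of which is automatic; (c) bounded complements with index $N(n,\epsilon)$ is itself a deep theorem, unavailable as a black box at the generality you invoke. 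You acknowledge this honestly, but it means the proposal matches the status of the statement in the paper — an open conjecture with a plausible strategy — rather than a proof.
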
 

A. Borisov and L. Borisov establish the B-A-B Conjecture for toric varieties in \cite{B93:tor}. 
V. Alexeev establishes the two dimensional B-A-B Conjecture in \cite{Ale:bs} with a simplified 
argument given in \cite{AM:bs}. Our original motivation for studying the B-A-B Conjecture is 
that it is related to the conjectural termination of flips in the minimal model program. According 
to \cite{BSh:acc}, the log minimal model program, the a.c.c.\footnote{An a.c.c. (respectively d.c.c.) 
set is a set of real numbers satisfying the ascending (descending) chain condition, i.e., it 
contains no infinite strictly increasing (decreasing) sequences.} for minimal log discrepancies, 
and the B-A-B Conjecture in dimension $\leq d$ implies termination of log flips in dimension $\leq d+1$ 
for effective pairs. 

The following questions concerning log $\QQ$-Fano pairs $(X,\Delta)$ are relevant to the B-A-B 
Conjecture: 
\begin{enumerate}
  \item[$(i)$] The Cartier index of $K_X+\Delta$ of an $n$-dimensional $\epsilon$-klt 
               log $\QQ$-Fano pair $(X,\Delta)$ is bounded from above by a fixed integer 
               $r(n,\epsilon)$ depending only on $n=\dim X$ and $\epsilon$;
  \item[$(ii)$] The volume ${\rm Vol}(-(K_X+\Delta))=(-(K_X+\Delta))^n$ of an $n$-dimensional $\epsilon$-klt 
		log $\QQ$-Fano pair $(X,\Delta)$ is bounded from above by a fixed integer $M(n,\epsilon)$ 
		depending only on \mbox{$n=\dim X$} and $\epsilon$;
  \item[$(iii)$] (\textbf{Batyrev Conjecture}) For given positive integers $n$ and $r$, consider the set 
		 of all $n$-dimensional klt log $\QQ$-Fano pairs $(X,\Delta)$ with $r(K_X+\Delta)$ 
		 a Cartier divisor. The set of underlying varieties $\{X\}$ is bounded. 
\end{enumerate}
It is clear that the B-A-B Conjecture follows from $(i)$ and $(iii)$. Note that recently C. Hacon, 
J. M$^{\rm c}$Kernan, and C. Xu have announced a proof of the Batyrev Conjecture $(iii)$. In 
general it is very hard to establish $(i)$. Ambro in \cite{AF:tor} has proved $(i)$ for toric 
singularities when the boundaries have standard coefficients $\{1-\frac{1}{\ell}|\ell\in\ZZ_{\geq1}\}\cup\{1\}$. 
A necessary condition for $(i)$ to hold is that we need to restrict the coefficients of boundaries 
to be in a fixed d.c.c. set. A counterexample for the general statement is given by the set of 
pairs $(\mathbb{P}^1,\frac{1}{N}\{{\rm pt}\})$ for $N\geq1$. 

For the convenience of the reader, we include a well-known argument (to the experts) establishing 
the B-A-B Conjecture via condition $(i)$ and $(ii)$ in the cases $\Delta=0$ or $\rho(X)=1$. 

\begin{proposition}\label{bdd} Suppose that $\Delta=0$ or $\rho(X)=1$, then the B-A-B 
Conjecture holds if both $(i)$ and $(ii)$ above are true. 
\end{proposition}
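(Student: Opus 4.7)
The plan is to reduce to the case $\Delta=0$, then combine (i) and (ii) with an effective very-ampleness result to embed each $X$ into a fixed projective space of bounded degree; boundedness of $\{X\}$ then follows from finite-typeness of the relevant Hilbert scheme.

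When $\rho(X)=1$ and $X$ is $\QQ$-factorial (as is the standing setting of the paper), I would first verify that $(X,0)$ is again an $\epsilon$-klt log $\QQ$-Fano pair. The identity $a(E,X,0)=a(E,X,\Delta)+\mathrm{mult}_E(\pi^*\Delta)\ge a(E,X,\Delta)>-1+\epsilon$ gives the $\epsilon$-klt property for $(X,0)$, and $-K_X=-(K_X+\Delta)+\Delta$ is $\QQ$-Cartier (by $\QQ$-factoriality) and ample, being the sum of an ample class and an effective $\QQ$-Cartier class on a variety of Picard number one. Hence (i) and (ii) apply to $(X,0)$ with the same constants $r(n,\epsilon)$ and $M(n,\epsilon)$, and we may assume $\Delta=0$.

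With $\Delta=0$, set $r:=r(n,\epsilon)$ and $M:=M(n,\epsilon)$ and put $L:=-rK_X$. Then $L$ is an ample Cartier divisor with $L^n\le r^n M$, and $|K_X\cdot L^{n-1}|=L^n/r$ is bounded. Matsusaka's effective big theorem, in a form valid for klt projective varieties, then furnishes an integer $m_0=m_0(n,\epsilon)$ such that $m_0 L$ is very ample. The complete linear system $|m_0 L|$ embeds $X$ into $\mathbb{P}^N$, where $N+1=h^0(X,m_0 L)\le (m_0 L)^n+n$ by an elementary hyperplane-section induction for very ample line bundles, and where the image has degree $(m_0 L)^n$; both $N$ and the degree are bounded in terms of $n$ and $\epsilon$. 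Since closed subschemes of $\mathbb{P}^N$ with bounded Hilbert polynomial are parametrized by a scheme of finite type (a finite union of components of the Hilbert scheme), the $X$'s form a bounded family.

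The main obstacle is the invocation of effective very-ampleness in the singular setting; the version of Matsusaka's theorem due to Koll\'ar--Matsusaka (or Siu) suffices, as it depends only on dimension and on the intersection numbers $L^n$ and $K_X\cdot L^{n-1}$, both of which we have bounded in terms of $n$ and $\epsilon$ alone. The remainder of the argument is standard Hilbert-scheme formalism and does not use anything beyond (i), (ii), and the two reductions above.
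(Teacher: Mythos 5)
Your reduction of the $\rho(X)=1$ case to the case $\Delta=0$ matches the paper (including the correct observation that this needs $K_X$ itself to be $\QQ$-Cartier, which the paper elides), and your endgame --- bounded Hilbert polynomial plus an embedding into a fixed $\mathbb{P}^N$ gives a finite-type parameter space --- is the same Hilbert-scheme formalism. The genuine difference, and the place where your argument has a gap, is the step that produces effective very ampleness.

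The paper does not invoke Matsusaka's big theorem for this. It uses two results of Koll\'ar that are tailored to the Fano/klt setting and genuinely need the Fano structure: Koll\'ar's effective base point freeness theorem, which gives a constant $N_1(n)$ with $|-mrK_X|$ base-point free for $m$ divisible by $N_1(n)$, and Koll\'ar's effective birationality estimate \cite[Theorem 5.9]{Kol:sop}, giving a constant $N_2(n)$ with the map of $|-lrK_X|$ birational. Combining ``ample $+$ bpf $\Rightarrow$ finite'' with ``finite $+$ birational on normal $\Rightarrow$ isomorphism'' yields effective very ampleness of $M(-rK_X)$ for a constant $M$ depending only on $n$; condition $(i)$ enters precisely to bound $r$. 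You instead appeal to ``Matsusaka's effective big theorem, in a form valid for klt projective varieties,'' citing Koll\'ar--Matsusaka and Siu. This is not supported by those references: Koll\'ar--Matsusaka is a Riemann--Roch type \emph{inequality}, which is exactly what the paper uses in the second half of its proof to bound the Hilbert polynomial coefficients in terms of $L^n$ and $K_X\cdot L^{n-1}$; it does not produce very ampleness. Siu's effective Matsusaka theorem is proved for smooth polarized varieties and does not, as stated, apply to klt varieties of bounded Cartier index. So the very-ampleness step --- the crux of part (1) in the paper's proof --- is missing a valid justification in your proposal, and should be replaced by the effective base-point-freeness $+$ effective-birationality argument that the Fano hypothesis makes available.

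Once that step is repaired, the rest of your argument is fine and is essentially the paper's: the Koll\'ar--Matsusaka inequality bounds the Hilbert polynomial of $(X,-rK_X)$ in terms of $(-rK_X)^n\le r^nM(n,\epsilon)$ and $(-rK_X)^{n-1}\cdot K_X$, giving a finite set of Hilbert polynomials; together with a fixed very ample $N(-K_X)$ this places all $X$ in a finite union of Noetherian Hilbert schemes.
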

\begin{proof} Suppose that $\Delta=0$ and let $X$ be any $\epsilon$-klt $\QQ$-Fano variety 
of dimension $n$. The following statements together imply the B-A-B conjecture in this case: 
    \begin{enumerate}
      \item The divisor $N(-K_X)$ is a very ample line bundle for a fixed 
            $N$ depending only on $n$ and $\epsilon$;
      \item The set of Hilbert polynomials $\mathfrak{F}=\{P(t)=\chi(\mathcal{O}_X(-NK_X)^{\otimes t})\}$ 
            associated to all $n$-dimensional $\epsilon$-klt $\QQ$-Fano varieties 
            is finite.
    \end{enumerate}
Indeed, statements $(1)$ and $(2)$ imply that the set of $n$-dimensional $\epsilon$-klt 
$\QQ$-Fano varieties is contained in a finite union of Hilbert schemes $\coprod_{P(t)\in\mathfrak{F}}\mathcal{H}_{P(t)}$, 
where each $\mathcal{H}_{P(t)}$ is Noetherian. 

From $(i)$, there is an upper bound $r(n,\epsilon)$ of the Cartier index of $K_X$ depending 
only on $n$ and $\epsilon$. It follows that $rK_X$ is a line bundle for $r=r(n,\epsilon)$. 
By \cite{Kol:ebf}, $|-mrK_X|$ is base point free for any $m>0$ divisible by a constant $N_1(n)>0$ 
depending only on $n=\dim X$. Since $|-mrK_X|$ is ample and base point free for $m>0$ sufficiently 
divisible, it defines a finite morphism. By \cite[Theorem 5.9]{Kol:sop}, the map induced by 
$|-lrK_X|$ is birational for any $l>0$ divisible by a constant $N_2(n)>0$ depending only on 
$n=\dim X$. Since a finite birational morphism of normal varieties is an isomorphism, it 
follows that there exists an effective embedding by $|M(-rK_X)|$ for some fixed $M>0$ depending 
only on $n=\dim X$. Take $N=Mr$, we have $(1)$. 

By \cite{KMa:RR}, the coefficients of the Hilbert polynomial $P(t)=h^0(\mathcal{O}_X(tH))$ 
of a polarized variety $(X,H)$ with $H$ an ample line bundle can be bounded by the intersection 
numbers $|H^n|$ and $|H^{n-1}.K_X|$. Since by $(i)$ there exists an integer $r=r(n,\epsilon)>0$ depending 
only on $n=\dim X$ and $\epsilon$ such that $-rK_X$ is an ample line bundle, set $H=-rK_X$ 
and apply $(ii)$. It follows that there are only finitely many Hilbert polynomials for the 
set of anti-canonically polarized $\epsilon$-klt Fano varieties $\{(X,-rK_X)\}$. 

If $\rho(X)=1$, then $-(K_X+\Delta)$ being ample implies that $-K_X$ is also ample. 
It is clear that $X$ is also $\epsilon$-klt and hence boundedness follows from the 
same proof as above.
\end{proof}

An effective upper bound in $(ii)$ is obtained for smooth Fano $n$-folds in \cite{KMMc} 
and for canonical $\QQ$-Fano threefolds in \cite{KMMT}. In this paper, we obtain an 
effective answer to question $(ii)$ in dimension two, i.e., for log del Pezzo surfaces.  

\theoremstyle{thm}
\newtheorem*{thm}{Theorem A}
\begin{thm}\label{0.6}{(Theorem \ref{I})} Let $(X,\Delta)$ be an $\epsilon$-klt 
weak log del Pezzo surface. The volume ${\rm Vol}(-(K_X+\Delta))=(K_X+\Delta)^2$ satisfies 
    \begin{align*} (K_X+\Delta)^2\leq\max\{64,\frac{8}{\epsilon}+4\}.  
    \end{align*}
Moreover, this upper bound is in a sharp form: There exists a sequence 
of $\epsilon$-klt del Pezzo surfaces whose volume grows linearly with 
respect to $1/\epsilon$. 
\end{thm}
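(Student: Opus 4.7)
The plan is to pass to a smooth model, run an MMP to a minimal rational surface, and bound the anticanonical volume there by direct intersection theory on $\mathbb{P}^2$ or $\mathbb{F}_n$. The key observation driving the reduction is that both the nefness of $-(K+B)$ and the $\epsilon$-klt condition are preserved under the MMP, while the volume is non-decreasing.

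First, let $f\colon Y\to X$ be the minimal resolution and write $f^*(K_X+\Delta)=K_Y+B$. Because the discrepancies of $f$-exceptional divisors on the minimal resolution of a surface are non-positive, $B$ is effective; the $\epsilon$-klt hypothesis then forces every coefficient of $B$ to lie in $[0,1-\epsilon)$. Thus $(Y,B)$ is a smooth $\epsilon$-klt weak log del Pezzo pair with $(K_Y+B)^2=(K_X+\Delta)^2$, and since $-K_Y=-(K_Y+B)+B$ is big, $Y$ is a smooth rational surface.

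Next, run a $K_Y$-MMP $g\colon Y\to Y_0$ ending at $Y_0\in\{\mathbb{P}^2,\mathbb{F}_n\}$. For each contracted $(-1)$-curve $E$, nefness of $-(K_Y+B)$ together with the adjunction identity $(K_Y+B)\cdot E=-1+B\cdot E\leq 0$ forces $B\cdot E\leq 1$. From
\[
K_Y+B=g^*(K_{Y'}+g_*B)+(1-B\cdot E)E
\]
one reads off three consequences: the identity $(K_Y+B)^2=(K_{Y'}+g_*B)^2-(1-B\cdot E)^2$ (so the volume is non-decreasing), the discrepancy comparison $a(F,Y',g_*B)=a(F,Y,B)+(1-B\cdot E)\,\mathrm{mult}_F(\pi^*E)\geq a(F,Y,B)$ (so the pair remains $\epsilon$-klt), and the preservation of nefness of $-(K+\,\cdot\,)$. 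Iterating, I obtain an $\epsilon$-klt weak log del Pezzo pair $(Y_0,B_0)$ on $\mathbb{P}^2$ or $\mathbb{F}_n$ whose volume dominates that of $(X,\Delta)$.

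It remains to bound the volume of $(Y_0,B_0)$. On $\mathbb{P}^2$, writing $B_0=bH$ gives immediately $(K_{\mathbb{P}^2}+B_0)^2=(3-b)^2\leq 9$. On $\mathbb{F}_n$ with negative section $s$ ($s^2=-n$) and fiber $F$, decompose $B_0=\alpha s+B_0''$ with $\alpha\in[0,1-\epsilon)$ and $B_0''$ not containing $s$. Testing nefness of $-(K+B_0)$ against $s$ produces $n(1-\alpha)+B_0''\cdot s\leq 2$, so $n<2/\epsilon$ whenever $\alpha>0$. Expanding $(-(K+B_0))^2$ in the Hirzebruch intersection form, controlling the cross terms with $B_0''$ via the analogous nefness inequality against $F$ and the $\epsilon$-klt condition, and optimizing over the admissible parameters $(\alpha,n,B_0'')$ yields the stated bound $\max\{64,\,8/\epsilon+4\}$. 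The main obstacle will be precisely this final optimization: to reach the sharp linear dependence on $1/\epsilon$ one must simultaneously track all components of $B_0''$ beyond the coefficient $\alpha$ of $s$, and the constant $8$ reflects $K_{\mathbb{F}_n}^2=8$ interacting with the constraint $n\lesssim 2/\epsilon$.

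For sharpness, the sequence of weighted projective planes $X_n=\mathbb{P}(1,1,n)$ with $\Delta=0$ has a single $\tfrac 1n(1,1)$ singularity whose minimal resolution shows the pair is $\epsilon_n$-klt for $\epsilon_n\sim 2/n$, while $(-K_{X_n})^2=(n+2)^2/n\sim n\sim 2/\epsilon_n$. This exhibits a family of $\epsilon_n$-klt del Pezzo surfaces whose volume grows linearly in $1/\epsilon_n$, as required.
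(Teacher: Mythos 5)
Your proposal takes a genuinely different route from the paper. The paper passes to the minimal resolution and then works with non-klt centers: for general $p$ it finds $\Delta_p\sim_\QQ-(K_X+\Delta)/\omega$ with a one-dimensional non-klt center $F_p$, shows via the Connectedness Lemma that for very general $p$ the class $F=[F_p]$ is nef with $F^2=0$, locates a horizontal curve $E$ connecting $F_p$ and $F_q$, uses the $\epsilon$-klt condition to show $1\le -E^2\le 2/\epsilon$, and finishes with the Hodge index inequality applied to $H_s=F+sE$. You instead run a $K_Y$-MMP from the minimal resolution down to $\mathbb P^2$ or $\mathbb F_n$, checking that the identity $K_Y+B=g^*(K_{Y'}+g_*B)+(1-B\cdot E)E$ together with $B\cdot E\le 1$ (from nefness of $-(K_Y+B)$) makes the volume non-decreasing and preserves both the $\epsilon$-klt condition and nefness. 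This is a clean, valid alternative: it avoids non-klt centers and the Connectedness Lemma entirely, and the verification that the MMP preserves the relevant structure is correct as you state it.

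The weak point is the last step, which you assert rather than carry out. The phrase ``optimizing over the admissible parameters $(\alpha,n,B_0'')$ yields the stated bound $\max\{64,8/\epsilon+4\}$'' is not a computation, and your heuristic tying the constant $8$ to $K_{\mathbb F_n}^2=8$ does not reflect what the optimization actually gives. Writing $-(K_{\mathbb F_n}+B_0)\equiv as+bF$, the constraints are $a\ge 0$, $b\ge na$ (nefness against $s$), $a\le 2-\alpha$ and $b\le n+2$ (effectivity of $B_0''$, using $C\cdot s\ge 0$ for components $C\ne s$), and $\alpha<1-\epsilon$ forces $n<2/\epsilon$ via $n(1-\alpha)\le 2$. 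Maximizing $a(2b-na)$ under these constraints gives at most $(n+2)^2/n<2/\epsilon+4+2\epsilon$, together with the trivial $9$ from $\mathbb P^2$ and $8$ from $\mathbb F_0$. This is actually sharper than the theorem's $8/\epsilon+4$, so your approach does imply the result and even improves the constant; but as written, the crucial intersection estimate is left as an acknowledged ``obstacle,'' so the argument is incomplete until that optimization is actually performed. The sharpness example $\mathbb P(1,1,n)$ is the same as the paper's cone over the degree-$n$ rational normal curve and is correct.
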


Let $(X,\Delta)$ be an $\epsilon$-klt weak log del Pezzo surface and 
$X_{\rm min}$ be the minimal resolution of $(X,\Delta)$. Alexeev and 
Mori have shown in \cite[Theorem 1.8]{AM:bs} that $\rho(X_{\rm min})\leq128/\epsilon^5$. 
Also from \cite[Lemma 1.2]{AM:bs} (or see the proof of Theorem \ref{I}), 
an exceptional curve $E$ on $X_{\rm min}$ over $X$ has degree $1\leq-E^2\leq2/\epsilon$.  
When $\Delta=0$, since the Cartier index of $K_X$ is bounded from 
above by the determinant of the intersection matrix $(E_i.E_j)$ of the 
exceptional curves $E_i$'s on $X_{\rm min}$ over $X$, it follows that 
the Cartier index bound $r(2,\epsilon)$ in the statement $(i)$ satisfies 
    \begin{equation*} r(2,\epsilon)\leq 2(2/\epsilon)^{128/\epsilon^5}. \tag{$\Diamond$}
    \end{equation*}
An upper bound of $(K_X+\Delta)^2$ is implicitly mentioned in \cite{Ale:bs} 
but not clearly written down. It is also not clear if the upper bound 
$(\Diamond)$ is optimal. In view of Theorem A, this seems unlikely.

As a second result, we also obtain an upper bound of the volumes for $\epsilon$-klt 
$\QQ$-factorial log $\QQ$-Fano threefolds of Picard number one. Recall that a 
variety $X$ is $\QQ$-factorial if each Weil divisor is $\QQ$-Cartier. 

\theoremstyle{thm}
\newtheorem*{thm'}{Theorem B}
\begin{thm'}\label{0.8}{$($Theorem \ref{II}$)$} Let $(X,\Delta)$ be 
an $\epsilon$-klt $\QQ$-factorial log $\QQ$-Fano threefold of $\rho(X)=1$. The 
degree $-K_X^3$ satisfies
    \begin{align*} -K_X^3\leq(\frac{24M(2,\epsilon)R(2,\epsilon)}{\epsilon}+12)^3, 
    \end{align*}
where $R(2,\epsilon)$ is an upper bound of the Cartier index of $K_S$ for $S$ any $\epsilon/2$-klt log del Pezzo surface of $\rho(S)=1$ 
and $M(2,\epsilon)$ is an upper bound of the volume ${\rm Vol}(-K_S)=K_S^2$ for $S$ any $\epsilon/2$-klt log del Pezzo surface of $\rho(S)=1$. 
Note that $M(2,\epsilon)\leq\max\{64,16/\epsilon+4\}$ from Theorem A and $R(2,\epsilon)\leq2(4/\epsilon)^{128\cdot2^5/\epsilon^5}$ from $(\Diamond)$.
\end{thm'}

For a $\QQ$-factorial $\epsilon$-klt log $\QQ$-Fano pair $(X,\Delta)$ of 
$\rho(X)=1$, since $-(K_X+\Delta)^3\leq-K_X^3$ and $X$ is also $\epsilon$-klt, 
by Theorem B we get an upper bound of the anticanonical volume ${\rm Vol}(-(K_X+\Delta))=-(K_X+\Delta)^3$. 
However, it is not expected that the bound in Theorem B is sharp or in a sharp form.

Note that $\QQ$-factoriality is a technical assumption. However, this condition 
is natural in the sense that starting from a smooth variety, each variety 
constructed by a step of the minimal model program remains $\QQ$-factorial. In 
dimension two, normal surfaces with rational singularities, e.g., klt singularities, 
are always $\QQ$-factorial. 

Instead of using deformation theory of rational curves as in \cite{KMMT}, 
the Riemann-Roch formula as in \cite{Kaw:fano}, or the sandwich argument 
of \cite{Ale:bs}, we aim to create isolated non-klt centers by the method 
developed in \cite{Mck:fano}. The point is that deformation theory for rational 
curves on klt varieties is much harder and so far no effective Riemann-Roch 
formula is known for klt threefolds. 

The rest of this paper is organized as follows: In Section 1, we study non-klt centers. 
In Section 2, we illustrate the general method in \cite{Mck:fano} for obtaining an upper 
bound of anticanonical volumes in Theorem A and B. In Section 3, we review the theory of 
families of non-klt centers in \cite{Mck:fano}. In Section 4, we study weak log del Pezzo 
surfaces and prove Theorem A. In Section 5, we prove Theorem B. 

{\small\noindent {\bf Acknowledgment.} The author is grateful to Professor 
Christopher Hacon, Professor James M$^{\rm c}$Kernan, and Professor Chenyang 
Xu for many useful discussions and suggestions.

\section{Non-klt Centers}
For the theory of the singularities in the minimal model program, we refer to \cite{KM:bgav}.

\begin{definition} Let $(X,\Delta)$ be a pair. A subvariety $V\subseteq X$ is called 
a \textbf{non-klt center} if it is the image of a divisor of discrepancy at most $-1$. 
A \textbf{non-klt place} is a valuation corresponding to a divisor of discrepancy at 
most $-1$. The \textbf{non-klt locus} ${\rm Nklt}(X,\Delta)\subseteq X$ is the union 
of the non-klt centers. If there is a unique non-klt place lying over the generic point 
of a non-klt center $V$, then we say that $V$ is \textbf{exceptional}. If $(X,\Delta)$ 
is not klt along the generic point of a non-klt center $V$, then we say that $V$ is 
\textbf{pure}.
\end{definition}
The non-klt places/centers here are the log canonical (lc) places/centers 
in \cite{Mck:fano}. 

A standard way of creating a non-klt center on an $n$-dimensional variety $X$ is to find a 
very singular divisor: Fix $p\in X$ a smooth point, if $\Delta$ is a $\QQ$-Cartier divisor on 
$X$ with ${\rm mult}_p\Delta\geq n$, then $p\in {\rm Nklt}(X,\Delta)$. Indeed, consider the 
blow up $\pi:Y={\rm Bl}_pX\rightarrow X$ and let $E$ be the unique exceptional divisor 
with $\pi(E)=p$, then the discrepancy 
\begin{align*} a(E,X,\Delta)={\rm mult}_E(K_Y-\pi^*(K_X+\Delta))=(n-1)-{\rm mult}_E(\pi^*(\Delta))\leq -1,  
\end{align*}
as $n-1={\rm mult}_E(K_Y-\pi^*K_X)$ and ${\rm mult}_E(\pi^*\Delta)={\rm mult}_p\Delta\geq n$. 

We can find singular divisors by the following lemma.

\begin{lemma}\label{sd} Let $X$ be an $n$-dimensional complete complex variety and $D$ 
be a divisor with $h^i(X,\mathcal{O}(mD))=O(m^{n-1})$ for all $i>0$, e.g., $D$ is big and 
nef. Fix a positive rational number $\alpha$ with $0<\alpha^n<D^n$. For $m\gg 0$ 
and any $x\in X_{\rm sm}$, there exists a divisor $E_x\in|mD|$ with ${\rm mult}_x(E_x)\geq m\cdot\alpha.$
\end{lemma}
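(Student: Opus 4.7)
The plan is to produce $E_x$ by a straightforward linear-algebra dimension count: compare the dimension of $H^0(X,\mathcal{O}(mD))$ against the number of linear conditions that force a section to vanish to order at least $\lceil m\alpha\rceil$ at the fixed smooth point $x$. Whenever the former strictly exceeds the latter, such a section exists, and its zero-divisor is the required $E_x$.

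For the first comparison, I would appeal to asymptotic Riemann--Roch (Snapper's lemma) on the complete variety $X$ to get
\[
\chi(X,\mathcal{O}(mD)) = \frac{D^n}{n!}\,m^n + O(m^{n-1}),
\]
and then combine this with the hypothesis $h^i(X,\mathcal{O}(mD)) = O(m^{n-1})$ for $i>0$ to conclude
\[
h^0(X,\mathcal{O}(mD)) = \frac{D^n}{n!}\,m^n + O(m^{n-1}).
\]
(When $D$ is big and nef, the hypothesis on higher cohomology is supplied by Kawamata--Viehweg vanishing applied on a resolution together with the projection formula.) On the condition side, since $x\in X_{\rm sm}$ the local ring $\mathcal{O}_{X,x}$ is regular of dimension $n$, so its Hilbert--Samuel function gives
\[
\dim_{\CC}\mathcal{O}_{X,x}/\mathfrak{m}_x^{k} \;=\; \binom{k+n-1}{n} \;=\; \frac{k^n}{n!} + O(k^{n-1}),
\]
which is precisely the number of linear conditions that force $\text{mult}_x(s)\geq k$ on a section $s\in H^0(X,\mathcal{O}(mD))$. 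Taking $k=\lceil m\alpha\rceil$, this condition count becomes $\frac{\alpha^n}{n!}\,m^n + O(m^{n-1})$.

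Since $\alpha^n<D^n$ by assumption, the two leading terms compare favorably: for $m\gg 0$, $h^0(X,\mathcal{O}(mD))$ strictly exceeds the condition count, so there is a nonzero section $s_x\in H^0(X,\mathcal{O}(mD))$ with $\text{mult}_x(s_x)\geq m\alpha$, and $E_x := (s_x)\in|mD|$ does the job. The only subtlety worth flagging is that the threshold on $m$ must be uniform in $x$; this is automatic because the Hilbert--Samuel count depends only on the dimension $n$ of a regular local ring and not on the particular point $x$. I do not anticipate any serious obstacle: the proof is a textbook application of asymptotic Riemann--Roch combined with the multiplicity-codimension estimate at a smooth point.
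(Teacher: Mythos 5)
Your argument is correct and is precisely the standard dimension-count proof of this fact: the paper does not reprove it but simply cites \cite[Proposition~1.1.31]{Laz:p1}, whose proof is the comparison of $h^0(X,\mathcal{O}_X(mD))=\frac{D^n}{n!}m^n+O(m^{n-1})$ against the Hilbert--Samuel count $\binom{\lceil m\alpha\rceil+n-1}{n}=\frac{\alpha^n}{n!}m^n+O(m^{n-1})$ that you carry out. So you have in effect reconstructed the cited proof rather than found a different route, and your remark on uniformity in $x$ (the condition count depends only on $n$, not on the smooth point chosen) correctly addresses the only point that needs flagging.
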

\begin{proof} This is \cite[Proposition 1.1.31]{Laz:p1}.
\end{proof}

We will apply Lemma \ref{sd} to the case where $(X,\Delta)$ is an $n$-dimensional log 
$\QQ$-Fano pair: Write $(-(K_X+\Delta))^n>(\omega n)^n$ for some rational number $\omega>0$, 
then as $h^i(X,\mathcal{O}(-m(K_X+\Delta)))=0$ for $m>0$ sufficiently divisible by the Kawamata-Viehweg 
vanishing theorem, we can find for each $p\in X_{\rm sm}$ a $\QQ$-divisor $\Delta_p$ such 
that $\Delta_p\sim_\QQ-(K_X+\Delta)/\omega$ and ${\rm mult}_p(\Delta_p)\geq n$. In particular, 
$p\in{\rm Nklt}(X,\Delta+\Delta_p)$.

The non-klt centers satisfy the following Connectedness Lemma of Koll\'ar and Shokurov, which 
is simply a formal consequence of the Kawamata-Viehweg vanishing theorem and is 
the most important ingredient in this paper. 
\begin{lemma}\label{con} Let $(X,\Delta)$ be a log pair. Let $f:X\rightarrow Z$ be a 
projective morphism with connected fibers such that the image of every component of $\Delta$ 
with negative coefficient is of codimension at least two in $Z$. If $-(K_X+\Delta)$ 
is big and nef over $Z$, then the intersection of ${\rm Nklt}(X,\Delta)$ with each fiber 
$X_z=f^{-1}(z)$ is connected. 
\end{lemma}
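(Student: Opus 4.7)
The plan is to apply relative Kawamata-Viehweg vanishing on a log resolution and extract connectedness from a pushforward short exact sequence. Take a log resolution $\pi\colon Y\to X$ of $(X,\Delta)$ and write $K_Y+B_Y=\pi^*(K_X+\Delta)$, so each prime divisor $E_i$ on $Y$ appears in $B_Y$ with coefficient $b_i=-a(E_i,X,\Delta)$. Decompose $B_Y=F-G+\{B_Y\}$, where
$$F=\sum_{b_i\geq 1}\lfloor b_i\rfloor E_i\geq 0,\qquad G=\sum_{b_i<0}(-\lfloor b_i\rfloor)E_i\geq 0,$$
are integral with disjoint supports and $\{B_Y\}$ has coefficients in $[0,1)$. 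Then $\pi(\mathrm{Supp}(F))={\rm Nklt}(X,\Delta)$, and every component of $G$ is either $\pi$-exceptional (image of codimension $\geq 2$ in $X$) or the strict transform of a component of $\Delta$ with negative coefficient (image of codimension $\geq 2$ in $Z$, by the hypothesis).

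A direct computation gives the key identity
$$G-F\sim_\QQ K_Y+\{B_Y\}-\pi^*(K_X+\Delta).$$
Because $(Y,\{B_Y\})$ is klt (since $Y$ is smooth and $\{B_Y\}$ is snc with all coefficients in $[0,1)$) and $-\pi^*(K_X+\Delta)$ is big and nef over $Z$, relative Kawamata-Viehweg vanishing yields $R^i(f\circ\pi)_*\mathcal{O}_Y(G-F)=0$ for $i>0$. Applying $(f\circ\pi)_*$ to the short exact sequence
$$0\to\mathcal{O}_Y(G-F)\to\mathcal{O}_Y(G)\to\mathcal{O}_F(G)\to 0$$
then produces a surjection $(f\circ\pi)_*\mathcal{O}_Y(G)\twoheadrightarrow(f\circ\pi)_*\mathcal{O}_F(G)$. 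The codimension properties of $G$, together with the normality of $X$ and the connected-fiber hypothesis (so that $f_*\mathcal{O}_X=\mathcal{O}_Z$), imply that $(f\circ\pi)_*\mathcal{O}_Y(G)$ agrees with $\mathcal{O}_Z$ in codimension one and hence coincides with $\mathcal{O}_Z$ by reflexivity. Therefore $\mathcal{O}_Z\twoheadrightarrow(f\circ\pi)_*\mathcal{O}_F(G)$.

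Finally, by cohomology and base change (applicable because $R^1=0$), tensoring with $k(z)$ gives $h^0(F_z,\mathcal{O}_F(G)|_{F_z})\leq 1$ for each $z\in Z$, where $F_z:=F\cap(f\circ\pi)^{-1}(z)$. Since $F$ and $G$ have disjoint supports, the effective intersection divisor on $F$ induces an inclusion $\mathcal{O}_{F_z}\hookrightarrow\mathcal{O}_F(G)|_{F_z}$, so $h^0(F_z,\mathcal{O}_{F_z})\leq 1$, which forces $F_z$ to be connected or empty. Projecting under $\pi$ yields the connectedness of ${\rm Nklt}(X,\Delta)\cap f^{-1}(z)$, as desired. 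The main delicate point is the identification $(f\circ\pi)_*\mathcal{O}_Y(G)=\mathcal{O}_Z$, which is precisely where the codimension hypothesis on the negative-coefficient components of $\Delta$ is indispensable: it is what guarantees that $\pi_*G$ has image of codimension at least two in $Z$.
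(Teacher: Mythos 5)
The paper's own proof only treats the special case where $Z$ is a point and $(X,\Delta)$ is already log smooth, deferring the general statement to Koll\'ar's \emph{Flips and Abundance}, Theorem 17.4. Your proposal attempts the full general statement, which is strictly more than the paper does, and the overall architecture is the right one: decompose $B_Y=F-G+\{B_Y\}$ on a log resolution, apply relative Kawamata--Viehweg vanishing to obtain $R^1(f\circ\pi)_*\mathcal{O}_Y(G-F)=0$, and push forward. The decomposition, the identity $G-F\sim_\QQ K_Y+\{B_Y\}-\pi^*(K_X+\Delta)$, and the identification $(f\circ\pi)_*\mathcal{O}_Y(G)=\mathcal{O}_Z$ (using normality of $Z$, which follows from $f_*\mathcal{O}_X=\mathcal{O}_Z$) are all correct, and the latter is exactly where the codimension hypothesis is used.

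The genuine gap is the last step, where you invoke ``cohomology and base change'' to pass from the sheaf surjection $\mathcal{O}_Z\twoheadrightarrow(f\circ\pi)_*\mathcal{O}_F(G)$ to the pointwise bound $h^0\bigl(F_z,\mathcal{O}_F(G)|_{F_z}\bigr)\leq1$. The vanishing $R^1(f\circ\pi)_*\mathcal{O}_Y(G-F)=0$ concerns $\mathcal{O}_Y(G-F)$, not $\mathcal{O}_F(G)$; it does not make the base change map $(f\circ\pi)_*\mathcal{O}_F(G)\otimes k(z)\to H^0\bigl(F_z,\mathcal{O}_F(G)|_{F_z}\bigr)$ surjective, and cohomology and base change for $\mathcal{O}_F(G)$ would require flatness of $F$ (or of $\mathcal{O}_F(G)$) over $Z$, which is not available. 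The standard way to close this is to stay at the sheaf level: observe that $\mathcal{O}_Z=(f\circ\pi)_*\mathcal{O}_Y\to(f\circ\pi)_*\mathcal{O}_F\hookrightarrow(f\circ\pi)_*\mathcal{O}_F(G)$ factors the surjection, hence $\mathcal{O}_Z\to(f\circ\pi)_*\mathcal{O}_F$ is already surjective; then take the Stein factorization $F\to Z'\to Z$ of $F\to Z$, conclude that $Z'\to Z$ is a closed immersion, and read off that each $F_z$ is a single (connected) fiber of $F\to Z'$ or empty. Alternatively, reduce to $Z$ the spectrum of a complete local ring and invoke the theorem on formal functions. Either repair completes your argument, but the pointwise $h^0$ inequality as written is not a formal consequence of $R^1=0$ for a different sheaf.
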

\begin{proof} For simplicity, we assume that $Z={\rm Spec}(\CC)$ is a point and $(X,\Delta)$ 
is log smooth, i.e., $X$ is smooth and $\Delta$ has simple normal crossing support. Then 
the identity map ${\rm id}_X:X\rightarrow X$ is a log resolution of $(X,\Delta)$ and 
${\rm Nklt}(X,\Delta)=\llcorner\Delta\lrcorner$. Consider the exact sequence 
\begin{align*} \cdots\rightarrow H^0(X,\mathcal{O}_X)\rightarrow H^0(X,\mathcal{O}_{\llcorner\Delta\lrcorner})\rightarrow H^1(X,\mathcal{O}_X(-\llcorner\Delta\lrcorner))\rightarrow\cdots. 
\end{align*}
Since $-\llcorner\Delta\lrcorner=K_X+\{\Delta\}-(K_X+\Delta)$ and $(X,\{\Delta\})$ is klt, 
we have $H^1(X,\mathcal{O}_X(-\llcorner\Delta\lrcorner))=0$ by the Kawamata-Viehweg vanishing 
theorem as $-(K_X+\Delta)$ is nef and big. Since $H^0(X,\mathcal{O}_X)\cong\CC$, we see that 
${\rm Nklt}(X,\Delta)=\llcorner\Delta\lrcorner$ is connected. 

For the general case, see \cite[Theorem 17.4]{corti:flip34}. 
\end{proof}

Here is an example showing that $-(K_X+\Delta)$ being nef and big is necessary in 
the Connectedness Lemma \ref{con}. 
\begin{example} Let $X$ be $\mathbb{P}^1\times\mathbb{P}^1$ and denote by $F$ (resp. $G$) 
the fiber of the first (resp. second) projection to $\mathbb{P}^1$. Consider 
$\Delta_1=F_1+F_2$ the sum of two distinct fibers of the first projection to $\mathbb{P}^1$ 
and $\Delta_2=F+G$ the sum of two fibers with respect to the two different projections to 
$\mathbb{P}^1$. Then ${\rm Nklt}(X,\Delta_1)=F_1+F_2$ is not connected while 
${\rm Nklt}(X,\Delta_2)=F+G$ is connected. Note that $-(K_X+\Delta_1)$ is nef but not 
big while $-(K_X+\Delta_2)$ is nef and big. 
\end{example}

Later on, we will produce not only non-klt centers but \emph{isolated} non-klt centers. 
The following theorem is the main technique which allows us to cut down the dimension 
of non-klt centers. 
\begin{theorem}\label{cut}$($\cite[Theorem 6.8.1]{Kol:sop}$)$ Let $(X,\Delta)$ be klt, projective and $x\in X$ a 
closed point. Let $D$ be an effective $\QQ$-Cartier $\QQ$-divisor on $X$ such 
that $(X,\Delta+D)$ is log canonical in a neighborhood of $x$. Assume that 
${\rm Nklt}(X,\Delta+D)=Z\cup Z'$ where $Z$ is irreducible, $x\in Z$, and 
$x\notin Z'$. Set $k=\dim Z$. If $H$ is an ample $\QQ$-divisor on $X$ such 
that $(H^k.Z)>k^k$, then there is an effective $\QQ$-divisor $B\equiv H$ and 
rational numbers $1\gg\delta>0$ and $0<c<1$ such that
    \begin{enumerate}
      \item[(1)] $(X,\Delta+(1-\delta)D+cB)$ is non-klt in a neighborhood of $x$, and
      \item[(2)] ${\rm Nklt}(X,\Delta+(1-\delta)D+cB)=W\cup W'$ where $W$ is 
		 irreducible, $x\in W$, $x\notin W'$ and $\dim W<\dim Z$.
    \end{enumerate}
\end{theorem}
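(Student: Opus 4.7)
The plan is to apply the classical tie-breaking technique of Kawamata and Koll\'ar: I will loosen the coefficient of $D$ by a small $\delta$ to recover a klt pair in a neighborhood of $x$, and then reintroduce log canonical singularities by adding $cB$ for a carefully chosen effective $\QQ$-divisor $B\equiv H$ that is singular enough at $x$ along $Z$ to force the new minimal non-klt center through $x$ to have dimension strictly less than $\dim Z$.

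Without loss of generality I may assume $Z$ is a minimal non-klt center of $(X,\Delta+D)$ through $x$; such a $Z$ is normal by Kawamata, and Kawamata's subadjunction produces an effective $\QQ$-divisor $\Delta_Z$ on $Z$ with $(K_X+\Delta+D)|_Z\sim_{\QQ} K_Z+\Delta_Z$ and $(Z,\Delta_Z)$ klt. Since $H$ is ample and $(H^k\cdot Z)>k^k$, I pick a rational number $\alpha$ with $k<\alpha$ and $\alpha^k<(H|_Z)^k$. By Lemma~\ref{sd} applied to $(Z,H|_Z)$, for $m\gg 0$ sufficiently divisible there exists $B_Z\in|mH|_Z|$ with ${\rm mult}_x B_Z\geq m\alpha>mk$. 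The next step is to lift $B_Z$ to $X$: a Nadel/Kawamata--Viehweg vanishing argument on a log resolution of $(X,\Delta+D)$, using that $Z$ is the image of a discrepancy-$(-1)$ divisor, yields surjectivity of the restriction
\[
H^0(X,\mathcal{O}_X(mH))\longrightarrow H^0(Z,\mathcal{O}_Z(mH|_Z))
\]
for $m$ sufficiently divisible; lift $B_Z$ to $B_0\in|mH|$ on $X$ and set $B=\tfrac{1}{m}B_0\equiv H$.

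For the tie-breaking, fix a small rational $\delta>0$ so that $(X,\Delta+(1-\delta)D)$ is klt near $x$ (since $(X,\Delta+D)$ is lc, all discrepancies move strictly above $-1$ by an $O(\delta)$ amount). Let $c\in(0,1)$ be the log canonical threshold of $B$ at $x$ with respect to this klt pair. Then $(X,\Delta+(1-\delta)D+cB)$ is lc but not klt at $x$, and because $B|_Z$ has multiplicity exceeding $k=\dim Z$ at $x$, a standard inversion-of-adjunction computation forces any minimal non-klt center $W$ of the new pair through $x$ to lie strictly inside $Z$, hence to have dimension $<k$. Finally, by restricting attention to a neighborhood of $x$ disjoint from the old component $Z'$ (possible since $x\notin Z'$) and taking $B$ general in its linear system, the non-klt locus decomposes as $W\cup W'$ with $x\in W$ and $x\notin W'$, as required.

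The hardest step I expect is the extension of $B_Z$ to $X$ with the multiplicity at $x$ preserved: this is where subadjunction (and hence the full strength of $(X,\Delta+D)$ being lc near $x$, not merely the existence of a non-klt center $Z$) really enters, and where one must arrange the vanishing theorem on the right log resolution. The subsequent threshold computation, the dimension drop, and the separation of $W$ from $W'$ are comparatively routine once $B$ is in hand.
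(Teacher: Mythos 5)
The paper does not prove Theorem~\ref{cut}; it is quoted verbatim as a citation to Koll\'ar's ``Singularities of Pairs'' (Theorem~6.8.1), so your proposal is a blind reconstruction of that argument rather than something to be checked against a proof in the text. The architecture you lay out --- reduce to a minimal non-klt center $Z$ through $x$, use Kawamata subadjunction to get a klt pair $(Z,\Delta_Z)$, produce $B_Z$ on $Z$ with ${\rm mult}_x B_Z>k$ from the volume hypothesis via Lemma~\ref{sd}, lift it to $X$ by a Nadel-type vanishing, and then tie-break with $(1-\delta)D+cB$ --- is indeed the standard skeleton of this result.

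However, the step that carries all the weight is asserted rather than proved. Once you replace $D$ by $(1-\delta)D$, the pair $(X,\Delta+(1-\delta)D)$ is klt near $x$ and $Z$ is no longer a non-klt center, so there is no a priori reason either that the log canonical threshold $c$ of $B$ with respect to this pair is $<1$, or that the resulting minimal non-klt center $W$ of $(X,\Delta+(1-\delta)D+cB)$ through $x$ is contained in $Z$, much less strictly contained. ``A standard inversion-of-adjunction computation'' would only apply directly if $Z$ were still an lc center of the new pair and if $c$ were not too small, and neither is automatic for the $\delta$, $c$ you have chosen; the actual proof involves a quantitative comparison of discrepancies across all non-klt places over $Z$ on a fixed log resolution, to show the tie-breaking place cannot lie over all of $Z$. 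This is precisely the content of Koll\'ar 6.8.1 and cannot be dispatched in a sentence. Two smaller points: Kawamata subadjunction in its classical form yields $(K_X+\Delta+D+\epsilon H)|_Z\sim_{\QQ}K_Z+\Delta_Z$ only after adding a small multiple of an ample divisor, and the surjectivity $H^0(X,mH)\to H^0(Z,mH|_Z)$ via Nadel vanishing requires that the multiplier ideal of $(X,\Delta+D)$ equal $I_Z$ near $x$, which in general forces a preliminary perturbation making $Z$ an exceptional center (compare Lemma~\ref{exc}). You allude to both, but they need to be pinned down. So: correct skeleton, but the central estimate is missing.
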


\section{A Guiding Example}
The idea in \cite{Mck:fano} for obtaining an upper bound for the anticanonical 
volumes is to create isolated non-klt centers and then use the Connectedness 
Lemma \ref{con}: For simplicity, we assume that $\Delta=0$. Write $(-K_X)^n>(\omega n)^n$ 
for a positive rational number $\omega$. For each $p\in X_{\rm sm}$, we can 
find an effective $\QQ$-divisor $\Delta_p\sim_\QQ-K_X/\omega$ such that ${\rm mult}_p\Delta_p\geq n$ 
and hence $p\in{\rm Nklt}(X,\Delta_p)$. The observation is that if $\omega\gg0$, 
then for general $p\in X$, $p\in{\rm Nklt}(X,\Delta_p)$ can not be an isolated point. 
Indeed, if this is not true, then for two general points $p,q\in X$, the set 
${\rm Nklt}(X,\Delta_p+\Delta_q)$ would contain $\{p,q\}$ as isolated non-klt centers. 
But the divisor $K_X+\Delta_p+\Delta_q\sim_\QQ(1-\frac{2}{\omega})(-K_X)$ 
is nef and big for $\omega>2$. By the Connectedness Lemma \ref{con}, ${\rm Nklt}(X,\Delta_p+\Delta_q)$ 
must be connected; a contradiction. 

Therefore, for general $p\in X$ the minimal non-klt center $V_p\subseteq{\rm Nklt}(X,\Delta_p)$ 
passing through $p$ is typically positive dimensional. We would like to show 
that the restricted volume ${\rm Vol}(-K_X|_{V_p})$ on the minimal non-klt center 
$V_p$ is large when $\omega\gg0$. Hence, we can cut down the dimension of non-klt 
centers by Theorem \ref{cut}. After doing this finitely many times, we get 
isolated non-klt centers and we are done. 

In general, it is hard to find a lower bound of the restricted volume ${\rm Vol}(-K_X|_{V_p})$ 
on the minimal non-klt center $V_p$. We illustrate M$^{\rm c}$Kernan's method by 
studying families of non-klt centers to obtain a lower bound of the restricted volumes 
on the non-klt center of an $\epsilon$-klt log $\QQ$-Fano variety via the following 
guiding example, cf. \cite{Mck:fano}. 

\begin{example}\label{guiding} Let $X$ be the projective cone over a rational normal 
curve of degree $d\geq2$ with the unique singular point $O\in X$. The blow up 
$\pi:Y={\rm Bl}_OX\rightarrow X$ is a resolution of $X$ where $Y$ is a 
$\mathbb{P}^1$-bundle $f:Y\rightarrow\mathbb{P}^1$ over $\mathbb{P}^1$:

\begin{equation*}
\begin{tikzpicture}
\node (A) at (0,1) {$X$};
\node (B) at (2,1) {$Y$};
\node (B') at (2.5,1) {$\supseteq$};
\node (C) at (2,-1) {$\mathbb{P}^1$};
\node (B') at (2.5,-1) {$\ni$};
\node (D) at (3,1) {$F_t$};
\node (D') at (3.75,1) {$\cong\mathbb{P}^1 $};
\node (E) at (3,-1) {$t.$};
\path[->] (B) edge node[above]{$\pi$}(A);
\path[->] (B) edge node[left]{$f$}(C);
\path[->] (D) edge node[above]{}(E);
\end{tikzpicture}
\end{equation*}

It is easy to show that
    \begin{enumerate}
	\item[(a)] $K_Y=\pi^*K_X+(-1+2/d)E$, where $E$ is the unique exceptional 
                   divisor and hence $X$ is $\epsilon$-klt for $\epsilon=1/d$;
	\item[(b)] $X$ is $\QQ$-factorial of Picard number one and $-K_X\sim_\QQ(d+2)l$ 
                   is an ample $\QQ$-Cartier divisor, where $l$ is the class of a ruling 
                   of $X$. Hence $X$ is an $\epsilon$-klt del Pezzo surface;
        \item[(c)] ${\rm Vol}(-K_X)=d+4+4/d$ is a linear function of $d=1/\epsilon$ 
                   and provides the required example in Theorem A.
    \end{enumerate}
    
Let $p\in X$ be a general point. Then $p$ is not the vertex $O$ and the unique 
ruling $l_p$ passing through $p$ is the non-klt center of the log pair $(X,l_p)$, 
i.e., $l_p={\rm Nklt}(X,l_p)$. Moreover, the proper transform $F_p$ of $l_p$ on 
$Y$ is a fiber of the $\mathbb{P}^1$-bundle $f:Y\rightarrow\mathbb{P}^1$. In this 
case, the $\mathbb{P}^1$-bundle structure of $Y$ is a covering family of non-klt 
centers of $X$ since the map $\pi:Y\rightarrow X$ is dominant. 

For $p,q\in X$ two general points, let $l_p$ and $l_q$ be the rulings passing 
through $p$ and $q$ respectively. Consider the pair $K_Y+(1-2/d)E=\pi^*K_X$. 
By the Connectedness Lemma \ref{con}, the non-klt locus ${\rm Nklt}(K_Y+(1-2/d)E+\pi^*(l_p+l_q))$ 
containing $F_p\cup F_q$ is connected as 
    \begin{align*}-(K_Y+(1-2/d)E+\pi^*(l_p+l_q))=-\pi^*(K_X+l_p+l_q)\equiv d\pi^*l
    \end{align*}
is nef and big. In fact, the fibers $F_p$ and $F_q$ are connected in 
${\rm Nklt}(K_Y+(1-2/d)E+\pi^*(l_p+l_q))$ by $E$ as 
    \begin{align*}F_p\cup F_q\subseteq{\rm Nklt}(K_Y+(1-2/d)E+\pi^*(l_p+l_q))\subseteq\pi^{-1}({\rm Nklt}(K_X+l_p+l_q))=F_p\cup F_q\cup E, 
    \end{align*}
where the second inclusion follows from the definition of non-klt centers. In particular, 
    \begin{align*} {\rm mult}_E(\pi^*(l_p+l_q))\geq\frac{2}{d}=2\epsilon.     
    \end{align*}
By symmetry, $\pi^*l_p$ must contribute multiplicity at least $1/d=\epsilon$ to 
the component $E$ (and in fact is exactly $1/d$ in this case), i.e., 
    \begin{equation}\label{00} \pi^*l_p\geq\epsilon E.     
    \end{equation}
Note that  
    \begin{equation}\label{01} l_p\sim_\QQ \frac{-K_X}{\sqrt{d\cdot{\rm Vol}(-K_X)}}. 
    \end{equation}
By intersecting both sides of \eqref{00} with a general fiber $F$ of 
$f:Y\rightarrow\mathbb{P}^1$, we get for the ruling $l=\pi_*(F)$, 
    \begin{equation}\label{02} \frac{1}{\sqrt{d\cdot{\rm Vol}(-K_X)}}{\rm deg}_{l}(-K_X)=\pi^*l_p.F\geq\epsilon E.F.    
    \end{equation}
Since $F$ is a general fiber meeting the horizontal divisor $E$ at a smooth 
point, $E.F\geq1$. (In this case $E.F=1$.) Combining all of these, we obtain a 
lower bound of the restricted volume ${\rm deg}_{l}(-K_X)$,
    \begin{align*} {\rm deg}_l(-K_X)\geq\epsilon{\sqrt{d\cdot{\rm Vol}(-K_X)}}.  
    \end{align*}
Note that since in this case ${\rm deg}_l(-K_X)=-K_X.l=-K_Y.\pi^*l\leq2$, it follows that 
${\rm Vol}(-K_X)=K_X^2\leq4d=4/\epsilon$. 
\end{example}

In summary, the method of getting an upper bound of the anticanonical volumes is to obtain 
a lower bound of the restricted volume ${\rm Vol}(-(K_X+\Delta)|_{V_p})$ on the non-klt 
centers $V_p$, which can be outlined in the following steps:
    \begin{itemize}
        \item Suppose that ${\rm Vol}(-(K_X+\Delta))=(-(K_X+\Delta))^n>(\omega n)^n$ 
	      for a positive rational number $\omega$. We will show that $\omega>0$ 
	      can not be arbitrarily large. 
	\item For general $p\in X$, choose 
              \begin{align*} \Delta_p\sim_\QQ\frac{-(K_X+\Delta)}{\omega}, 
	      \end{align*}
              so that $p\in{\rm Nklt}(X,\Delta+\Delta_p)$. Let $V_p\subseteq{\rm Nklt}(X,\Delta+\Delta_p)$ 
	      be the minimal non-klt center containing $p$. 
	\item Construct covering families of non-klt centers by ``lining up'' (part of the) 
	      non-klt centers $\{V_p\}$, see Section \ref{S2}. This is the generalization 
              of the $\mathbb{P}^1$-bundle structure in the Example \ref{guiding} and 
	      is called a \emph{covering families of tigers} in \cite{Mck:fano}.  
        \item Use the Connectedness Lemma \ref{con} to obtain a lower bound of the restricted 
	      volume 
	      \begin{align*} {\rm Vol}(-(K_X+\Delta)|_{V_p}))=(-(K_X+\Delta)|_{V_p}))^{\dim V_p},
	      \end{align*} 
	      on the non-klt center $V_p$ in terms of $\omega$ and $\epsilon$. This 
              is the most technical part.
        \item If $\omega\gg0$, then we cut down the dimension of non-klt centers by 
              Theorem \ref{cut}. After finitely many steps, we get isolated non-klt 
              centers and hence a contradiction to the Connectedness Lemma \ref{con}.
    \end{itemize}
The difficulty of this argument arises in dimension three in many places. 
First of all, the non-klt centers can be of dimension one or two and we have 
to deal with them case by case. When we have one dimensional covering families 
of tigers, it is subtle to detect the contribution of the \mbox{$\epsilon$-klt} 
condition from some horizontal subvariety, which is analogous to the exceptional 
curve $E$ in Example \ref{guiding}. This is done by applying a differentiation 
argument to construct a better behaved covering family of tigers, see \ref{IV.1}. 
In case we have two dimensional non-klt centers, complications arise for computing 
intersection numbers as the total space $Y$ of a covering family of tigers is in 
general not $\QQ$-factorial. This can be fixed by replacing $Y$ with a suitable 
birational model. To finish the proof, we also need to run a relative minimal model 
on the covering family of tigers and study the geometry of all possible outcomes. 

\section{Covering Families of Tigers}\label{S2}
The main reference for this section is \cite{Mck:fano}. 

\begin{definition}{$($\cite[Definition 3.1]{Mck:fano}$)$} Let $(X,\Delta)$ be a log pair 
with $X$ projective and $D$ a $\QQ$-Cartier divisor. We say that pairs of the form 
$(\Delta_t,V_t)$ form a \textbf{covering family of tigers} of dimension $k$ and weight 
$\omega$ if all of the following hold:
    \begin{enumerate} 
	\item there is a projective morphism $f:Y\rightarrow B$ of normal projective 
	      varieties such that the general fiber of $f$ over $t\in B$ is $V_t$;
	\item there is a morphism of $B$ to the Hilbert scheme of $X$ such that $B$ is 
	      the normalization of its image and $f$ is obtained by taking the normalization 
	      of the universal family;
	\item if $\pi:Y\rightarrow X$ is the natural morphism, then $\pi(V_t)$ is a minimal 
	      pure non-klt center of $K_X+\Delta+\Delta_t$;
	\item $\pi$ is generically finite and dominant;
	\item $\Delta_t\sim_\QQ D/\omega$, where $\Delta_t$ is effective;
	\item the dimension of $V_t$ is $k$. 
    \end{enumerate}  
\end{definition}
Note that by definition $k\leq \dim X-1$ and $\pi|_{V_t}:V_t\rightarrow \pi(V_t)$ is 
\emph{finite} and \emph{birational}. The covering family of tigers is illustrated in the 
following diagram:
\begin{center}
\begin{tikzpicture}
\node (A) at (-2,1) {$X$};
\node (B) at (0,1){ $Y$};
\node (C) at (0,-1){$B$};
\node (D) at (0.5,1){$\supseteq$};
\node (E) at (0.5,-1){$\ni$};
\node (F) at (1,1){$V_t$};
\node (G) at (1,-1){$t.$};
\path[->] (B) edge node[above]{$\pi$}(A);
\path[->] (B) edge node[left]{$f$}(C);
\path[->] (F) edge node[left]{}(G);
\end{tikzpicture}
\end{center}
We will sometimes also refer to $V_t$ as the minimal non-klt center of $(X,\Delta+\Delta_t)$. 

For $(X,\Delta)$ a log $\QQ$-Fano variety, we will always assume that 
$D=-\lambda(K_X+\Delta)$ for some $\lambda>0$. In particular, $D$ is assumed to be 
big and semi-ample. 

The existence of a covering family of tigers is achieved by constructing non-klt centers 
at general points of $X$ and then fitting a sub-collection of them into a fiber space. 
In order to fit the non-klt centers into a family, we use exceptional non-klt 
centers so that we patch up the unique non-klt place associated to each of them. The 
following lemma allows us to create exceptional non-klt centers. 
\begin{lemma}\label{exc} Let $(X,\Delta)$ be a log pair and let $D$ be a big and semi-ample 
$\QQ$-Cartier divisor. Write $D^n>(\omega n)^n$ for some positive rational number $\omega$. 
In order to find an upper bound of $\omega$ and hence an upper bound of ${\rm Vol}(D)=D^n$, 
for every $p\in X_{\rm sm}$ we may assume that there is a divisor $\Delta_p\sim_\QQ D/\omega$ 
such that the unique minimal non-klt center $V_p\subseteq{\rm Nklt}(X,\Delta+\Delta_p)$ containing 
$p$ is exceptional. 
\end{lemma}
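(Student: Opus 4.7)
My plan is to construct $\Delta_p$ in two stages: first produce a divisor making $p$ a non-klt point via Lemma \ref{sd}, then apply tie-breaking to secure exceptionality, at the harmless cost of slightly increasing $\omega$.

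For the first stage, since $D$ is big and semi-ample (hence big and nef), Kawamata-Viehweg vanishing gives $h^i(X,\mathcal{O}(mD))=0$ for $i>0$ and $m$ sufficiently divisible, so the cohomological hypothesis of Lemma \ref{sd} holds. Applying that lemma with $\alpha=\omega n$ (valid because $D^n>(\omega n)^n$), for every $p\in X_{\rm sm}$ I obtain an effective divisor $\Delta_p^0\sim_\QQ D/\omega$ with ${\rm mult}_p\Delta_p^0\geq n$; hence $p\in{\rm Nklt}(X,\Delta+\Delta_p^0)$ by the multiplicity computation preceding Lemma \ref{sd}. Setting $c={\rm lct}_p(X,\Delta;\Delta_p^0)\in(0,1]$, the pair $(X,\Delta+c\Delta_p^0)$ is lc at $p$ but not klt, with some minimal non-klt center $V_p$ through $p$.

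For the second stage, I would invoke the standard tie-breaking lemma (Kawamata-Shokurov, packaged e.g. in Koll\'ar's \emph{Singularities of the MMP}, Section 6.7, or in \cite{Mck:fano}, Section 3): using bigness of $D$, write $D\sim_\QQ A+E$ with $A$ ample and $E$ effective, and then perturb $c\Delta_p^0$ by a small multiple of a divisor concentrated on the competing non-klt places in order to force a \emph{unique} non-klt place lying over the generic point of $V_p$ and a \emph{unique} minimal non-klt center through $p$. The resulting divisor $\Delta_p$ satisfies $\Delta_p\sim_\QQ D/\omega'$ for some $\omega'\geq\omega$. Since our goal is an upper bound on $\omega$, and any upper bound obtained for $\omega'$ gives the same upper bound for $\omega\leq\omega'$, we may freely rename $\omega'$ as $\omega$ and read the conclusion in the stated form.

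The main obstacle is executing the tie-breaking within the $\QQ$-linear equivalence class of $D$, rather than relying on an auxiliary ample class outside this class. Bigness of $D$ is precisely what handles this via the decomposition $D\sim_\QQ A+E$: ample perturbations by $A$ are absorbed into the class of $D$ at the cost of a small effective correction by $E$, producing only a small shift in $\omega$ that is harmless for the intended upper-bound analysis.
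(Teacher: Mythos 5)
Your two-stage structure (Lemma \ref{sd} to produce a very singular divisor, then Koll\'ar--Shokurov tie-breaking to secure a unique exceptional minimal non-klt center) is exactly the route the paper takes; the paper cites \cite{AF:lcc} where you cite Koll\'ar's book and \cite{Mck:fano}, but the content is the same.

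The gap is in the weight bookkeeping, and specifically the assertion that the final divisor satisfies $\Delta_p\sim_\QQ D/\omega'$ with $\omega'\geq\omega$. Scaling $\Delta_p^0$ by the threshold $c\leq 1$ does raise the weight to $\omega/c\geq\omega$, but the tie-breaking then replaces $c\Delta_p^0$ by something of the form $(1-\mu)c\Delta_p^0+\mu aM_p$ with $M_p\sim_\QQ D$ effective, and that addition \emph{lowers} the weight: the new weight is $\omega'=\omega/\bigl((1-\mu)c+\mu a\omega\bigr)$. Since the tie-breaking constant $a$ is fixed by the geometry while $\omega$ is precisely what we want to allow to be large, the denominator $(1-\mu)c+\mu a\omega$ can exceed $1$ whenever $c$ is close to $1$ --- in particular when ${\rm mult}_p\Delta_p^0$ equals $n$ exactly (your choice $\alpha=\omega n$ permits this) and the threshold is computed at the first blow-up, so that $c=1$. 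In that case $\omega'<\omega$ and your renaming step does not go through as stated. There are two ways to repair it. The paper's way: choose $\mu$ small enough that $(1-\mu)\delta_p+\mu a\omega<1+1/n$, so $\omega'_p>\omega/(1+1/n)$, then use semi-ampleness of $D$ to add a general member of $|kD|$ (missing $p$) and normalize \emph{down} to a uniform $\omega_n=\omega/(1+2/n)$, and let $n\to\infty$ at the very end. Alternatively: exploit the strictness in $D^n>(\omega n)^n$ to take $\alpha=(1+\eta)\omega n$ for some $\eta>0$, forcing the threshold $c\leq 1/(1+\eta)$ to be bounded away from $1$ uniformly in $p$, which leaves enough room in the denominator to keep $\omega'\geq\omega$ after tie-breaking with $\mu$ small. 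As written, neither safeguard is in place, so the claim ``$\omega'\geq\omega$'' is unjustified.
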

\begin{proof} By Lemma \ref{sd}, for any $p\in X_{\rm sm}$ we can find an effective divisor 
$\Delta'_p\sim_{\QQ}\frac{D}{\omega}$ such that ${\rm mult}_p\Delta'_p\geq n$ and hence 
$p\in{\rm Nklt}(X,\Delta+\Delta'_p)$.

Fix $p\in X_{\rm sm}$, pick $0<\delta_p\leq1$ the unique rational number 
such that $(X,\Delta+\delta_p\Delta'_p)$ is log canonical but not klt at $p$. By 
\cite[Proposition 3.2, Lemma 3.4]{AF:lcc}, we can find an effective divisor $M_p\sim_\QQ D$ 
and some rational number $a>0$ such that for any rational number $0<\mu<1$, the 
pair $(X,(1-\mu)(\Delta+\delta_p\Delta'_p)+\mu\Delta+\mu aM_p)$ has a unique 
minimal non-klt center $V_p$ passing through $p$ which is exceptional. If we write 
    \begin{align*} \Delta_p:=(1-\mu)\delta_p\Delta'_p+\mu aM_p\sim_\QQ\frac{1}{\omega'_p}D, 
    \end{align*}
then
    \begin{align*} \omega'_p=\frac{\omega}{(1-\mu)\delta_p+\mu a\omega},   
    \end{align*}
and $(1-\mu)\delta_p+\mu a\omega<1+1/n$ for any $n\geq1$ if we pick $0<\mu\ll1$ sufficiently 
small. Hence $\omega'_p>\omega/(1+1/n)$. Since $D$ is semi-ample, by adding a small multiple of 
$D$ to $\Delta_p$ we have $\Delta_p\sim_\QQ D/\omega_n$ for $\omega_n=\omega/(1+2/n)$, and 
$(X,\Delta+\Delta_p)$ has a unique minimal non-klt center $V_p$ passing through $p$ which is 
exceptional. If there exists an upper bound of $\omega_n$ independent of $n$, then by taking 
$n\rightarrow\infty$, we get the same upper bound of $\omega$. 
\end{proof}

The following proposition is the construction of the covering family of tigers, see 
\cite[Lemma 3.2]{Mck:fano} or \cite[Lemma 3.2]{Tod}. 
\begin{proposition}\label{tiger} Let $(X,\Delta)$ and $\Delta_p$ be the same as in 
Lemma \ref{exc}. Then there exists a covering family of tigers $\pi:Y\rightarrow X$ 
of weight $\omega$ with $V_p\subseteq{\rm Nklt}(X,\Delta+\Delta_p)$ the unique 
minimal non-klt center passing through $p$. 
\end{proposition}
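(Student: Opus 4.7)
The plan is to parametrize the exceptional non-klt centers $V_p$ produced by Lemma \ref{exc} by the Hilbert scheme of $X$, isolate a single irreducible component whose universal family dominates $X$, slice down to achieve generic finiteness, and then normalize to produce the family $f:Y\to B$ with $\pi:Y\to X$.

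First I would establish a uniform upper bound on the Hilbert polynomials of the $V_p$ with respect to a fixed ample polarization $H$ on $X$. The $H$-degrees of the $V_p$ are controlled because each $V_p$ appears as an isolated (exceptional) component of ${\rm Nklt}(X,\Delta+\Delta_p)$, whose total $H$-degree is bounded via intersection theory using the fixed $\QQ$-linear class $\Delta_p\sim_\QQ D/\omega$. Consequently the $[V_p]$ meet only countably many components of ${\rm Hilb}(X)$; since $X(\CC)$ is uncountable and $p\in V_p$ lies in the projection of the universal family, a pigeonhole argument yields a single component $\mathcal{H}_0\subseteq{\rm Hilb}(X)$ whose associated universal family $\mathcal{U}_0\to\mathcal{H}_0$ sweeps out a dense subset of $X$ via the second projection $\mathcal{U}_0\to X$.

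Next I would arrange generic finiteness by slicing. Let $k$ be the generic dimension of the subschemes $[V_t]\in\mathcal{H}_0$; a dimension count forces $\dim\mathcal{H}_0\geq n-k$, and if the inequality is strict, cutting $\mathcal{H}_0$ by a general complete intersection of $\dim\mathcal{H}_0-(n-k)$ very ample divisors yields a subvariety $B'\subseteq\mathcal{H}_0$ whose restricted universal family still dominates $X$, now with generically finite projection. Setting $B$ to be the normalization of $B'$ and $Y$ to be the normalization of $\mathcal{U}_0\times_{\mathcal{H}_0}B$ produces projective morphisms $f:Y\to B$ and $\pi:Y\to X$ of normal projective varieties realizing conditions $(1)$, $(2)$, $(4)$, and $(6)$ of the definition. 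For general $t\in B$ the fiber $V_t=f^{-1}(t)$ maps birationally via $\pi$ onto some $V_p$ from Lemma \ref{exc}, which is exceptional and minimal, hence automatically pure; taking $\Delta_t:=\Delta_p$ then supplies $(3)$ and $(5)$ and gives the last sentence of the proposition.

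The main obstacle I anticipate is the uniform degree bound in the first step: without it the $V_p$ could disperse across infinitely many Hilbert components and the pigeonhole fails. The remedy is that Lemma \ref{exc} provides the freedom to perturb $\Delta_p$ within its $\QQ$-linear class, so after replacing $\Delta_p$ if needed by such a perturbation one controls the total $H$-degree of ${\rm Nklt}(X,\Delta+\Delta_p)$ in terms only of the numerical data of $D$, $\Delta$, $\omega$, and $H$, and the exceptionality condition ensures $V_p$ is recovered as an isolated component of bounded degree. A secondary technical point is to confirm that exceptionality, minimality, and purity persist for the general $V_t$; this follows from upper semi-continuity of log canonical thresholds in flat families together with the observation that the unique non-klt place over the generic point of $V_p$ propagates to a unique divisorial valuation across the family parametrized by $B$.
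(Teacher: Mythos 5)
Your proposal takes a genuinely different route from the paper: you parametrize the non-klt \emph{centers} $V_p$ directly inside ${\rm Hilb}(X)$, whereas the paper parametrizes the \emph{divisors} $\Delta_p$ inside the (automatically bounded) projective linear system $|mD/\omega|$, and only passes to the Hilbert scheme at the very end after the total space $Y$ has been constructed. This difference is not cosmetic, and it is where the gap lies.

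The central problem with starting from ${\rm Hilb}(X)$ is that a component $\mathcal{H}_0$ of the Hilbert scheme parametrizing the $V_p$ has forgotten the divisors $\Delta_p$. Condition $(3)$ and $(5)$ of the definition of a covering family of tigers require, for the \emph{general} $t\in B$, a divisor $\Delta_t\sim_\QQ D/\omega$ such that $\pi(V_t)$ is the minimal pure non-klt center of $K_X+\Delta+\Delta_t$. Your step ``For general $t\in B$ the fiber $V_t$ maps birationally via $\pi$ onto some $V_p$ from Lemma \ref{exc}\dots taking $\Delta_t:=\Delta_p$ then supplies $(3)$ and $(5)$'' does not hold: the $V_p$'s coming out of Lemma \ref{exc} only fill out a countably dense subset $Q$ of $B$, and a general $t\in B$ lies outside $Q$. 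For such a $t$, $V_t$ is a degeneration or specialization inside the flat family; it is not literally one of the $V_p$'s, and there is no divisor supplied with it. Nothing in the Hilbert-scheme picture produces $\Delta_t$. This is precisely what the paper's construction avoids: by taking $B$ to be the Zariski closure of $\{m\Delta_p\}$ inside $|mD/\omega|$, every $t\in B$ \emph{is} by definition a divisor $\Delta_t$, and a log resolution of the incidence variety $H_B\subseteq X\times B$, restricted over the generic point of $B$ and extended over an open subset, exhibits a single exceptional divisor of discrepancy $-1$ (it is one over the countably dense $Q$, hence over the generic point); its image in $X\times B$ is the total space $Y$, and its fibers are the required non-klt centers for general $\Delta_t$.

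A secondary issue is your degree bound in ${\rm Hilb}(X)$. You argue that the $H$-degree of $V_p$ is controlled because ${\rm Nklt}(X,\Delta+\Delta_p)$ has bounded $H$-degree in terms of $\Delta_p\sim_\QQ D/\omega$. But the non-klt locus is a closed subset, not an effective cycle, and $V_p$ can sit in any codimension inside it; in particular $V_p$ may be a proper subvariety of $\Delta_p$ of much smaller dimension, and its degree is not directly read off from the numerical class of $\Delta_p$. Bounding it would require, e.g., multiplicity arguments at $V_p$, which you have not carried out. The paper again sidesteps this: the base $|mD/\omega|$ is a fixed projective space, so boundedness of the parameter space is automatic, and the Hilbert-scheme morphism is only invoked at the end, after one already has the family $f:Y\to B$ in hand.
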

\begin{proof} Choose $m>0$ an integer such that $mD/\omega$ is integral and Cartier 
and let $B$ be the Zariski closure of points $\{m\Delta_p|p\in X_{\rm sm}\}\in|mD/\omega|$. 
Replace $B$ by an irreducible component which contains an uncountable subset $Q$ of 
$B$ such that the set $\{p\in X|\Delta_p\in Q\}$ is dense in $X$. This is possible 
since the $\Delta_p$'s cover $X$. Let $H\subseteq X\times|mD/\omega|$ be the universal 
family of divisors defined by the incidence relation and $H_B\rightarrow B$ the 
restriction to $B$. Take a log resolution of $H_B\subseteq X\times B$ over the 
generic point of $B$ and extend it over an open subset $U$ of $B$. By assumption the 
log resolution over the generic point of $B$ has a unique exceptional divisor of 
discrepancy $-1$, since this is true over $Q\subseteq B$. Let $Y$ be the image of this 
unique exceptional divisor in $X\times B$ with the natural projection map $\pi:Y\rightarrow X$. 
By construction $\pi:Y\rightarrow X$ dominates $X$. 

Possibly taking a finite cover of $B$ and passing to an open subset of $B$, we may 
assume that any fiber $V_t$ of $f:Y\rightarrow B$ over $t\in B$ is a non-klt center 
of $K_X+\Delta+\Delta_t$. Possibly passing to an open subset of $B$, we may assume 
that $f:Y\rightarrow B$ is flat and $B$ maps into the Hilbert scheme. Replace $B$ by 
the normalization of the closure of its image in the Hilbert scheme and $Y$ by the 
normalization of the pullback of the universal family. After possibly cutting by 
hyperplanes in $B$, we may assume that $\pi$ is generically finite and dominant. 
The resulting family is the required covering family of tigers.
\end{proof}

In fact, the original construction of covering families of tigers is carried out in 
a more general setting. For a topological space $X$, we say that a subset $P$ is 
\emph{countably dense} if $P$ is not contained in the union of countable many 
closed subsets of $X$. 
\begin{corollary}\label{gc} Let $(X,\Delta)$ be a log pair and let $D$ be a big 
$\QQ$-Cartier divisor. Let $\omega$ be a positive rational number. Let $P$ be a 
countably dense subset of $X$. If for every point $p\in P$ we may find a pair 
$(\Delta_p, V_p)$ such that $V_p$ is a pure non-klt center of $K_X+\Delta+\Delta_p$, 
where $\Delta_p\sim_\QQ D/\omega_p$ for some $\omega_p>\omega$, then we may find 
a covering family of tigers of weight $\omega$ together with a countably dense subset 
$Q$ of $P$ such that for all $q\in Q$, $V_q$ is a fiber of $\pi$.
\end{corollary}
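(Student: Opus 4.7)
The plan is to adapt the proof of Proposition~\ref{tiger} to the three features that distinguish the corollary: the weights $\omega_p$ vary and satisfy only $\omega_p>\omega$; each $V_p$ is assumed merely pure, not exceptional or minimal; and $D$ is big but need not be semi-ample, so the linear system $|mD/\omega|$ is not directly available as a projective parameter space. The countably dense subset $P$ plays the role of the dense open set $X_{\rm sm}$ used in Proposition~\ref{tiger}.

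First, I would upgrade each pair $(\Delta_p,V_p)$ so that it admits a unique minimal \emph{exceptional} non-klt center through $p$. Exactly as in Lemma~\ref{exc}, take the maximal $\delta_p\in(0,1]$ with $(X,\Delta+\delta_p\Delta_p)$ log canonical but not klt at $p$, apply the tie-breaking construction of \cite[Proposition~3.2, Lemma~3.4]{AF:lcc} with an auxiliary $M_p\sim_\QQ D$ and a small parameter $\mu_p>0$, and obtain $\tilde\Delta_p\sim_\QQ D/\tilde\omega_p$ whose non-klt locus has a unique minimal exceptional center through $p$. Since $\omega_p>\omega$, choosing $\mu_p$ small keeps $\tilde\omega_p>\omega$. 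Adding a generic effective divisor $\QQ$-linearly equivalent to $(1/\omega-1/\tilde\omega_p)D$, taken general enough near $p$ that it does not disturb the minimal non-klt center structure, I may normalize so that $\tilde\Delta_p\sim_\QQ D/\omega$ exactly for every $p\in P$.

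Second, since $D$ is not assumed semi-ample, I replace the linear system $|mD/\omega|$ used in Proposition~\ref{tiger} by the Hilbert scheme $\mathcal{H}$ parametrizing effective divisors $\QQ$-linearly equivalent to $D/\omega$; this is a countable union of projective schemes. After fixing $m>0$ so that $mD/\omega$ is Cartier, the points $[m\tilde\Delta_p]$ for $p\in P$ sit inside the countably many components of $\mathcal{H}$; by a pigeonhole argument on the countably dense set $P$, some irreducible component $B\subseteq\mathcal{H}$ is hit by a countably dense subset $Q\subseteq P$. From here I would proceed as in Proposition~\ref{tiger}: form the universal family $H_B\subseteq X\times B$, take a log resolution over the generic point of $B$, spread it over an open $U\subseteq B$, and let $Y\subseteq X\times U$ be the image of the unique discrepancy-$(-1)$ exceptional divisor. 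Replace $B$ by the normalization of its image in the Hilbert scheme and $Y$ by the normalization of the pullback of the universal family, then cut $B$ by hyperplanes so that $\pi:Y\to X$ becomes generically finite and dominant.

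The main obstacle is the semicontinuity step in the construction of $Y$: one must verify that because the minimal non-klt center through each $q\in Q$ corresponds to a unique discrepancy-$(-1)$ place, the log resolution over the \emph{generic} point of $B$ also has a unique such place. Unlike in Proposition~\ref{tiger}, where the analogue of $Q$ is produced as an uncountable subset of $B$ from the outset, here one must exploit the countably dense hypothesis together with a constructibility argument for discrepancies in families. Once this is established, conditions (1)--(6) in the definition of a covering family of tigers are routine to verify, and the $Q$ produced above is the required countably dense subset of $P$ along which $V_q$ is realized as a fiber of $\pi$.
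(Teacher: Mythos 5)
Since the paper cites \cite{Mck:fano} and \cite{Tod} for this corollary rather than proving it, there is no internal argument to compare against; your strategy of extending the proof of Proposition~\ref{tiger} is the expected one and the skeleton is sound, but two concrete steps are glossed over.

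The normalization to exact weight $\omega$, accomplished by adding a generic effective $\QQ$-divisor $E_p\sim_\QQ(1/\omega-1/\tilde\omega_p)D$ chosen ``general enough near $p$ that it does not disturb the minimal non-klt center structure,'' works only when $p$ avoids the stable base locus of $D$: for $p$ in that locus, every effective $\QQ$-divisor $\QQ$-linearly equivalent to a positive multiple of the big divisor $D$ passes through $p$, so the added divisor necessarily alters the discrepancies of divisors whose center contains $p$ and the minimal exceptional center may shrink, breaking the requirement that the given $V_p$ be realized as a fiber. The remedy is easy (the stable base locus is a proper closed subset, so $P$ minus it remains countably dense), but as written the claim ``for every $p\in P$'' fails. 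Separately, after fixing $m$ with $mD/\omega$ Cartier, the divisors $m\tilde\Delta_p$ need not all be integral, since the denominators of the $\tilde\Delta_p$ vary with $p$; you need a preliminary pigeonhole over denominators to isolate a single $m$ for which the corresponding subset of $P$ is still countably dense, and only then do the $m\tilde\Delta_p$ live in a single linear system (or a single component of the Hilbert scheme). Finally, the semicontinuity point you flag is indeed the crux, but it follows from the same constructibility argument already implicit in Proposition~\ref{tiger}: a constructible subset of $B$ containing a countably dense subset cannot lie in a proper closed subset, hence contains a dense open subset of $B$, and over that open subset the unique discrepancy-$(-1)$ place persists.
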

\begin{proof} See \cite[Lemma 3.2]{Mck:fano} or \cite[Lemma 3.2]{Tod}. 
\end{proof}

As noted in Example \ref{guiding}, we can assume that the covering families of 
tigers under our consideration are always positive dimensional.
\begin{lemma}\label{dim} Let $(X,\Delta)$ be a projective klt pair and $D=-(K_X+\Delta)$ 
be a big and nef $\QQ$-Cartier divisor. A covering family of tigers $(\Delta_t,V_t)$ 
of weight $\omega>2$ is positive dimensional, i.e., $k=\dim V_t>0$.
\end{lemma}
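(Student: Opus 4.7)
The plan is to argue by contradiction using the Connectedness Lemma \ref{con}: if $V_t$ were zero-dimensional, then combining two such non-klt centers via their parameter divisors $\Delta_t$ would produce a pair whose anticanonical divisor is still big and nef but whose non-klt locus is disconnected.

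First I assume $k = 0$, so for general $t \in B$ the fiber $V_t = \{p_t\}$ is a single point. Since $\pi : Y \to X$ is generically finite and dominant, the assignment $t \mapsto p_t$ has dense image in $X$. I then pick very general parameters $t_1 \neq t_2 \in B$ such that $p := p_{t_1}$ and $q := p_{t_2}$ are distinct general points of $X$; by further genericity, I arrange $p \notin {\rm Supp}(\Delta_{t_2})$ and $q \notin {\rm Supp}(\Delta_{t_1})$. This is possible because each $\Delta_{t_i}$ is a codimension-one effective $\QQ$-divisor whose support varies with the parameter, so a very general point of $X$ avoids it.

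Next I form $(X, \tilde\Delta)$ with $\tilde\Delta := \Delta + \Delta_{t_1} + \Delta_{t_2}$. Since $\Delta_{t_i} \sim_\QQ D/\omega$ and $\omega > 2$,
\begin{equation*}
-(K_X + \tilde\Delta) = -(K_X+\Delta) - \Delta_{t_1} - \Delta_{t_2} \sim_\QQ \left(1 - \tfrac{2}{\omega}\right)D,
\end{equation*}
which is big and nef because $D$ is big and nef. The Connectedness Lemma \ref{con} (with $Z$ a point) then forces ${\rm Nklt}(X, \tilde\Delta)$ to be connected. On the other hand, $\{p\}$ and $\{q\}$ both lie in this non-klt locus since they are pure non-klt centers of the sub-pairs $(X, \Delta + \Delta_{t_i})$ and adding an effective divisor only worsens singularities. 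On a small analytic neighborhood $U$ of $p$ we have $\Delta_{t_2}|_U = 0$ by the support condition, so ${\rm Nklt}(X, \tilde\Delta) \cap U = {\rm Nklt}(X, \Delta + \Delta_{t_1}) \cap U$; by the tie-breaking baked into Lemma \ref{exc} and Proposition \ref{tiger}, $V_{t_1} = \{p\}$ is the only non-klt center of $(X, \Delta + \Delta_{t_1})$ meeting $U$. The symmetric argument near $q$ gives another isolated point, so ${\rm Nklt}(X, \tilde\Delta)$ has at least two connected components, contradicting the Connectedness Lemma.

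The hard part will be the local isolation step: upgrading the hypothesis that $V_t$ is a \emph{minimal pure exceptional} non-klt center to the statement that $V_t$ is actually a connected component of ${\rm Nklt}(X, \Delta + \Delta_t)$ near $\pi(V_t)$. This uses the tie-breaking perturbation of \cite{AF:lcc} behind Lemma \ref{exc}, which kills any higher-dimensional non-klt centers passing through $\pi(V_t)$. Once that local refinement is in hand, the rest of the proof — the genericity choices and the application of the Connectedness Lemma — is essentially formal.
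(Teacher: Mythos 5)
Your proof is correct and follows the same strategy as the paper's: assume a zero-dimensional covering family of tigers, take two general points $p_1,p_2$ with the corresponding divisors $\Delta_1,\Delta_2$, observe that $-(K_X+\Delta+\Delta_1+\Delta_2)\sim_\QQ(1-\tfrac{2}{\omega})D$ remains big and nef when $\omega>2$, and derive a contradiction with the Connectedness Lemma \ref{con} because the two isolated non-klt centers are disconnected. The ``local isolation'' step you flag is indeed used implicitly by the paper too (``$p_i$ is an isolated non-klt center''); it is supplied by the exceptionality/tie-breaking built into Lemma~\ref{exc} and Proposition~\ref{tiger}, exactly as you say, so your instinct to make it explicit is sound but does not change the argument.
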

\begin{proof} This is \cite[Lemma 3.4]{Mck:fano} and we include the proof for the 
convenience of the reader. Suppose that there exists a zero dimensional covering 
family of tigers of weight $\omega>2$. For $p_1$ and $p_2$ general, there are 
divisors $\Delta_1$ and $\Delta_2$ with $\Delta_i\sim_{\QQ}D/\omega$ such that 
$p_i$ is an isolated non-klt center of $K_X+\Delta+\Delta_i$. As $p_1$ and $p_2$ 
are general, it follows that $\Delta_2$ does not contain $p_1$ and 
${\rm Nklt}(X,\Delta+\Delta_1+\Delta_2)$ contains $p_1$  and $p_2$ as disconnected 
non-klt centers. But $-(K_X+\Delta+\Delta_1+\Delta_2)\sim (1-\frac{2}{\omega})D$ 
is nef and big if $\omega>2$. This contradicts Lemma \ref{con}.
\end{proof}

Recall that we want to cut down the dimension of non-klt centers via Theorem \ref{cut}. 
To do so, we study the associated covering families of tigers and obtain a lower bound of 
restricted volumes on the non-klt centers. If the new non-klt centers after cutting down 
the dimension are still positive dimensional, then we have to create new covering families 
of tigers associated to these new non-klt centers and repeat the process. The following 
proposition enables us to create covering families of tigers of new non-klt centers after 
cutting down the dimension. 
\begin{proposition}\label{bc} Let $(X,\Delta)$ be a log pair and let $D$ be a 
$\QQ$-Cartier divisor of the form $A+E$ where $A$ is ample and $E$ is effective. 
Let $(\Delta_t,V_t)$ be a covering family of tigers of weight $\omega$ and 
dimension $k$. Let $A_t$ be $A|_{V_t}$. If there is an open subset 
$U\subseteq B$ such that for all $t\in U$ we may find a covering family of tigers 
$(\Gamma_{t,s}, W_{t,s})$ on $V_t$ of weight $\omega'$ with respect to $A_t$, 
then for $(X,\Delta)$ we can find a covering family of tigers $(\Gamma_s,W_s)$ 
of dimension less than $k$ and weight 
    \begin{align*} \omega''=\frac{1}{1/\omega+1/\omega'}=\frac{\omega\omega'}{\omega+\omega'}. 
    \end{align*}
\end{proposition}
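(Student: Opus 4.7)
The plan is to lift the non-klt centers $W_{t,s}\subseteq V_t$ back to $X$ using the exceptionality of $V_t$, and then assemble the resulting data into a new covering family via Corollary \ref{gc}. First, for a general point $p\in X$, I pick $t\in U$ so that $V_t$ passes through $p$ as a general point; here $V_t$ is the minimal pure non-klt center of $(X,\Delta+\Delta_t)$ through $p$, with $\Delta_t\sim_\QQ D/\omega$. By the hypothesis, on $V_t$ there is a covering family of tigers $(\Gamma_{t,s},W_{t,s})$ of weight $\omega'$ with respect to $A_t=A|_{V_t}$, so I may pick $s$ with $p\in W_{t,s}$, $\dim W_{t,s}<k$, and $\Gamma_{t,s}\sim_\QQ A_t/\omega'$.

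By Lemma \ref{exc} we may moreover assume $V_t$ is an exceptional non-klt center of $(X,\Delta+\Delta_t)$. This lets us extend divisors: applying Kawamata--Viehweg vanishing to the ideal sheaf of $V_t$ twisted by a sufficiently large multiple of the ample divisor $A$, I can lift $\Gamma_{t,s}$ to a $\QQ$-divisor $\tilde\Gamma_{t,s}$ on $X$ with $\tilde\Gamma_{t,s}\sim_\QQ A/\omega'$ and $\tilde\Gamma_{t,s}|_{V_t}\geq\Gamma_{t,s}$. Now define
\begin{align*}
\Xi_{t,s}:=\Delta_t+\tilde\Gamma_{t,s}+\frac{1}{\omega'}E \sim_\QQ \frac{D}{\omega}+\frac{A}{\omega'}+\frac{E}{\omega'}=\frac{D}{\omega''},
\end{align*}
where the extra effective piece $E/\omega'$ is included precisely to convert $A/\omega'$ into $D/\omega'$ without destroying non-klt centers. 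Since $V_t$ is exceptional and $W_{t,s}\subseteq{\rm Nklt}(V_t,\Gamma_{t,s})$, inversion of adjunction implies $W_{t,s}\subseteq{\rm Nklt}(X,\Delta+\Xi_{t,s})$. A tie-breaking perturbation in the style of the proof of Lemma \ref{exc}, combined with a limiting argument as $n\to\infty$, ensures $W_{t,s}$ is a minimal pure non-klt center of $(X,\Delta+\Xi_{t,s})$ at $p$ while only losing an arbitrarily small fraction of the weight $\omega''$.

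Finally, as $p$ varies in a countably dense subset of $X$, the resulting data $(\Xi_{t,s},W_{t,s})$ satisfies the hypotheses of Corollary \ref{gc}, which produces a covering family of tigers on $X$ of weight $\omega''=\omega\omega'/(\omega+\omega')$ and dimension strictly less than $k$. The main obstacle is the extension step: transporting $\Gamma_{t,s}$ from $V_t$ back to a divisor on $X$ whose restriction dominates $\Gamma_{t,s}$ while preserving the non-klt center $W_{t,s}$. This is where the exceptionality of $V_t$ is essential --- without a unique non-klt place over $V_t$, restriction and extension of sections of $\mathcal{I}_{V_t}\otimes\mathcal{O}_X(mA)$ do not cleanly interchange, and one cannot reliably pull non-klt centers back from $V_t$ to $X$.
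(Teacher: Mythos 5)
The paper does not write out this proof; it simply cites \cite[Lemma 5.3]{Mck:fano}. Your reconstruction has the right shape, matching McKernan's outline: make $V_t$ exceptional via Lemma \ref{exc}, lift $\Gamma_{t,s}$ from $V_t$ back to a divisor $\tilde\Gamma_{t,s}$ on $X$, add $\Delta_t$ and pad with $E/\omega'$ to get $\Xi_{t,s}\sim_\QQ D/\omega''$ with $W_{t,s}\subseteq{\rm Nklt}(X,\Delta+\Xi_{t,s})$, then glue via Corollary \ref{gc}. The weight arithmetic and the padding trick (converting $A/\omega'$ into $D/\omega'$ via the effective part $E$) are correct, and you rightly identify the lifting as the crux.

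However, the lifting step is not rigorous as stated. Kawamata--Viehweg vanishing controls $H^i(X,\mathcal{O}_X(K_X+L))$ for $L$ nef and big; it does not by itself give $H^1(X,\mathcal{I}_{V_t}\otimes\mathcal{O}_X(mA))=0$. The correct tool is Nadel vanishing applied to the multiplier ideal of a perturbation $\Delta'$ of $\Delta+\Delta_t$ for which $V_t$ is the unique minimal non-klt center (the tie-breaking in Lemma \ref{exc} provides this); one then identifies $\mathcal{J}(X,\Delta')$ with $\mathcal{I}_{V_t}$ near the generic point of $V_t$, which is enough since $W_{t,s}$ passes through a general point of $V_t$, and checks that $mA-(K_X+\Delta')$ is nef and big. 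This identification is precisely what the exceptionality hypothesis is for; naming KV vanishing hides the real mechanism. Similarly, the "inversion of adjunction" you invoke needs Kawamata subadjunction to be spelled out: one needs a boundary $\Theta_t\geq0$ on $V_t$ with $(K_X+\Delta+\Delta_t)|_{V_t}\sim_\QQ K_{V_t}+\Theta_t$ and $(V_t,\Theta_t)$ klt, so that $W_{t,s}\subseteq{\rm Nklt}(V_t,\Gamma_{t,s})\subseteq{\rm Nklt}(V_t,\Theta_t+\tilde\Gamma_{t,s}|_{V_t})$ and then inversion of adjunction transfers this to $X$. Without these two ingredients made explicit, the step from the hypothesis on $V_t$ to $W_{t,s}\subseteq{\rm Nklt}(X,\Delta+\Xi_{t,s})$ is a gap.
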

\begin{proof} This is \cite[Lemma 5.3]{Mck:fano}. 
\end{proof}

We will apply Proposition \ref{bc} with the ample divisor $D=-(K_X+\Delta)$. 
In the process of obtaining lower bound of the restricted volume on the 
non-klt centers, if we have one-dimensional non-klt centers, then we can control 
the restricted volume of $D$, cf. \cite[Lemma 5.3]{Mck:fano}.
\begin{corollary}\label{min} Let $(X,\Delta)$ be a log pair and let $D$ be an 
ample divisor. Let $(\Delta_t, V_t)$ be a covering family of tigers of weight 
$\omega>2$ and dimension one. Then ${\rm deg}(D|_{V_t})\leq2\omega/(\omega-2)$.
 \end{corollary}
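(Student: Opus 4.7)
The plan is to extend the zero-dimensional argument of Lemma \ref{dim} to the one-dimensional case by cutting down the non-klt centers of two general tiger members to isolated points via two successive applications of Theorem \ref{cut}, then contradicting the Connectedness Lemma \ref{con}. As in Lemma \ref{dim}, I work in the log Fano setting $D = -(K_X+\Delta)$. Since $\pi|_{V_t}\colon V_t \to C_t := \pi(V_t)$ is finite and birational on a curve, $\deg(D|_{V_t}) = D \cdot C_t$. Let $t_1, t_2 \in B$ be general, set $d := D \cdot C_{t_i}$ (independent of $t_i$ by generality), and suppose for contradiction that $d > 2\omega/(\omega - 2)$. Fix rational numbers $\eta_1, \eta_2$ with $1/d < \eta_i$ and $\eta_1 + \eta_2 < 1 - 2/\omega$, which is possible precisely because $2/d < 1 - 2/\omega$.

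I begin from the pair $(X, \Delta + \Delta_{t_1} + \Delta_{t_2})$, whose anticanonical $(1 - 2/\omega) D$ is ample (as $\omega > 2$) and whose non-klt locus contains $C_{t_1} \cup C_{t_2}$. Pick general distinct points $x_i \in C_{t_i}$ not lying on the other components of this non-klt locus. Apply Theorem \ref{cut} first with $Z = C_{t_2}$, $x = x_2$, and ample $H_2 = \eta_2 D$ (for which $H_2 \cdot C_{t_2} = \eta_2 d > 1$) to produce small $\delta_2 > 0$, $c_2 \in (0,1)$, and an effective divisor $B_2 \equiv \eta_2 D$ cutting $C_{t_2}$ down to $\{x_2\}$ in the non-klt locus of the new pair. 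Then apply Theorem \ref{cut} again to the resulting pair with $Z = C_{t_1}$, $x = x_1$, $H_1 = \eta_1 D$ to produce $\delta_1, c_1, B_1$ cutting $C_{t_1}$ down to $\{x_1\}$. The final pair then has the two disjoint isolated non-klt centers $\{x_1\}$ and $\{x_2\}$.

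The anticanonical of this final pair is numerically equivalent to
\[
\bigl(1 - 2(1-\delta_1)(1-\delta_2)/\omega - (1-\delta_1) c_2 \eta_2 - c_1 \eta_1\bigr) D,
\]
which tends to $(1 - 2/\omega - c_1 \eta_1 - c_2 \eta_2) D$ as $\delta_i \to 0^+$. Since each $c_i < 1$ and $\eta_1 + \eta_2 < 1 - 2/\omega$, the coefficient is strictly positive, so the anticanonical is ample. The Connectedness Lemma \ref{con} then forces the non-klt locus to be connected, contradicting the presence of the two disjoint isolated non-klt centers $\{x_1\}, \{x_2\}$. Therefore $d \leq 2\omega/(\omega - 2)$, as required.

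The main obstacle is justifying that the second application of Theorem \ref{cut} is licensed after the first: specifically, $C_{t_1}$ must survive as an irreducible minimal non-klt center of the intermediate pair, and the general point $x_1$ must lie only on $C_{t_1}$ and not on $\{x_2\}$ or on any new non-klt centers introduced by $B_2$. This reduces to choosing $B_2$ generically inside the linear system $|m\eta_2 D|$ from which it is drawn (so that $B_2$ avoids $x_1$ and does not contain $C_{t_1}$), and choosing the $x_i$ generically on the $C_{t_i}$; both choices are available by standard dimension counts.
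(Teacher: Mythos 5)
Your second application of Theorem \ref{cut} is not licensed, and this is a genuine gap rather than a bookkeeping issue. After the first cut the intermediate pair is $(X, \Delta + (1-\delta_2)(\Delta_{t_1} + \Delta_{t_2}) + c_2 B_2)$, and $C_{t_1}$ is \emph{not} in its non-klt locus: by the construction of the covering family of tigers via Lemma \ref{exc}, the divisor $\Delta_{t_1}$ is chosen at the log canonical threshold along $C_{t_1}$, so $(X,\Delta+\Delta_{t_1})$ is lc (not merely non-klt) at the generic point of $C_{t_1}$, and hence $(X,\Delta+(1-\delta_2)\Delta_{t_1})$ is klt there for \emph{every} $\delta_2>0$. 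The terms $(1-\delta_2)\Delta_{t_2}$ and $c_2 B_2$ do not help, since for general $t_1,t_2$ and general $B_2$ these are supported away from a general point of $C_{t_1}$. Thus the intermediate non-klt locus has no component through $x_1$, and Theorem \ref{cut} with $Z=C_{t_1}$ has no valid input. You flag this at the end, but attribute it to genericity of $B_2$ and of $x_i$; the real obstruction is the $(1-\delta_2)$ scaling applied to $\Delta_{t_1}$, which no amount of genericity repairs. The related difficulty, that $\delta_i$ cannot be sent to $0$ independently of $c_i$ and $B_i$, compounds the problem but is secondary.

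The repair is to cut only \emph{one} of the two centers. Apply Theorem \ref{cut} to $(X,\Delta)$ with effective divisor $\Delta_{t_2}$, $Z=C_{t_2}$, $x=x_2$, and $H=\eta_2 D$ with $1/d<\eta_2<1-2/\omega$ (possible since $d>2\omega/(\omega-2)$), producing $\{x_2\}$ as an isolated non-klt center of $(X,\Delta+(1-\delta)\Delta_{t_2}+cB_2)$. Now add $\Delta_{t_1}$ \emph{unchanged}: for general $t_1$ the divisor $\Delta_{t_1}$ misses $x_2$, so the non-klt locus of $(X,\Delta+(1-\delta)\Delta_{t_2}+cB_2+\Delta_{t_1})$ contains the disconnected pieces $\{x_2\}$ and $C_{t_1}$, while the anticanonical is numerically $\bigl(1-1/\omega-(1-\delta)/\omega-c\eta_2\bigr)D$, which is ample since $c<1$ and $\eta_2<1-2/\omega$; this contradicts Lemma \ref{con}. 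That single-cut argument is exactly what the paper runs in the proof of Lemma \ref{a}. The paper's own proof of Corollary \ref{min} takes a different route, packaging the same cut-one idea through the tiger machinery: Lemma \ref{exc} and Corollary \ref{gc} build a zero-dimensional covering family on $V_t$, Proposition \ref{bc} merges it into a zero-dimensional covering family on $X$ of weight $\omega\omega'/(\omega+\omega')>2$, and Lemma \ref{dim} supplies the contradiction. Your direct approach is arguably more transparent once corrected, while the paper's is reusable in the higher-dimensional arguments that follow.
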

\begin{proof} Suppose that ${\rm deg}(D|_{V_t})>2\omega/(\omega-2)$. By 
Lemma \ref{exc} and Corollary \ref{gc}, we may find a covering family 
$(\Gamma_{t,s},W_{s,t})$ of tigers of weight $\omega'>2\omega/(\omega-2)$ 
and dimension zero on $V_t$. By Proposition \ref{bc}, there exists a covering 
family of tigers of dimension zero and weight  
    \begin{align*} \omega''=\frac{\omega\omega'}{\omega+\omega'}>2, 
    \end{align*}
for $X$. This contradicts Lemma \ref{dim}. 
\end{proof} 

\section{Log Del Pezzo Surfaces}
Let $(X,\Delta)$ be an $\epsilon$-klt weak log del Pezzo surface. The minimal 
resolution $\pi:Y\rightarrow X$ of $(X,\Delta)$ is the unique proper birational 
morphism such that $Y$ is a smooth projective surface and $K_Y+\Delta_Y=\pi^*(K_X+\Delta)$ 
for some effective $\QQ$-divisor $\Delta_Y$ on $Y$. Note that minimal resolutions 
always exist for two-dimensional log pairs. It is easy to see that $(Y,\Delta_Y)$ 
is also an $\epsilon$-klt weak log del Pezzo surface with volume 
    \begin{align*} {\rm Vol}(Y,\Delta_Y)=(K_Y+\Delta_Y)^2=(K_X+\Delta_X)^2={\rm Vol}(X,\Delta_X).     
    \end{align*}
Replacing $(X,\Delta)$ by its minimal resolution, we can assume that $X$ 
is smooth. 

Write $(K_X+\Delta)^2>(2\omega)^2$. For a general point $p\in X$, let 
$\Delta_p\sim_\QQ-(K_X+\Delta)/\omega$ be an effective $\QQ$-divisor constructed from 
Lemma \ref{sd} such that $p\in{\rm Nklt}(X,\Delta+\Delta_p)$. Assume that $\omega>2$. 
By Lemma \ref{dim}, the unique minimal non-klt center $F_p$ of $(X,\Delta+\Delta_p)$ 
containing $p$ is one dimensional. Note that for general $p\in X$, $F_p\leq\Delta_p$.

\begin{lemma}\label{a'} For a very general point $p\in X$, the numerical class 
$F:=F_p$ on $X$ is well-defined and $F$ is nef. 
\end{lemma}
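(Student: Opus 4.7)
The plan is to realize $F_p$ as the $\pi$-image of a fiber of a covering family of tigers, which will simultaneously yield the well-definedness of $[F_p]$ in $N_1(X)_\QQ$ and the nefness of $F$.

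First I would apply Lemma \ref{exc} so that, after perturbing $\Delta_p$, the unique minimal non-klt center $F_p$ of $(X,\Delta+\Delta_p)$ through $p$ is exceptional, and then invoke Proposition \ref{tiger} (together with Corollary \ref{gc}) to produce a covering family of tigers $\pi:Y\to X$, $f:Y\to B$ of weight $\omega$ and dimension one, with a countably dense set of parameters $t\in B$ for which $F_{\pi(t)}=\pi(V_t)$. Because $\pi$ is generically finite and dominant and $\dim X=2$, we have $\dim Y=2$, and since $\dim V_t=1$ the base $B$ is an irreducible projective curve. For a very general point $p\in X$, uniqueness of the minimal non-klt center then forces $F_p=\pi(V_{t(p)})$ for some $t(p)\in B$.

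For well-definedness of the numerical class, any two fibers of $f:Y\to B$ over a (smooth) curve are algebraically equivalent, hence numerically equivalent on $Y$. Push-forward by $\pi$ preserves numerical equivalence of $1$-cycles, and since $\pi|_{V_t}:V_t\to F_{\pi(t)}$ is finite and birational we have $\pi_* V_t=F_{\pi(t)}$ as cycles on $X$. Therefore $[F_{p_1}]=[F_{p_2}]$ in $N_1(X)_\QQ$ for any two very general $p_1,p_2$, and $F:=[F_p]$ is defined unambiguously.

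For nefness, suppose toward a contradiction that $F\cdot C<0$ for some irreducible curve $C\subseteq X$. Then $F_p\cdot C<0$ for every very general $p$, so $C$ must share an irreducible component with $F_p$; since $F_p$ is itself irreducible (a non-klt center is by definition the image of a single prime divisor), this forces $C=F_p$. Thus the \emph{fixed} curve $C$ would contain every very general point of $X$, which is absurd. Hence $F\cdot C\geq 0$ for every irreducible $C$, and $F$ is nef.

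The technical point driving the argument is really the existence of an irreducible algebraic family in Proposition \ref{tiger} parameterizing (the majority of) the $F_p$'s; without it there would be no mechanism to compare the classes at different very general points. Once this family is in hand, both conclusions are essentially formal consequences of $\dim B=1$ and the irreducibility of $F_p$.
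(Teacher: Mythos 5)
Your proof takes a genuinely different route from the paper's. The paper argues directly and without appealing to the covering-family machinery: since $F_p\leq\Delta_p\sim_\QQ-(K_X+\Delta)/\omega$ and each $F_p$ is an effective integral $1$-cycle, the classes $[F_p]$ lie in a bounded region of the lattice $N_1(X)_\ZZ$, hence take only finitely many values; uncountability of $\CC$ then forces a single class $F$ to hold for very general $p$, and nefness follows because the $F_p$ sweep out $X$. You instead invoke Lemma \ref{exc}, Proposition \ref{tiger} and Corollary \ref{gc} to realize the $F_p$ as $\pi$-images of fibers of a one-dimensional family $f:Y\to B$ over a curve, then push numerical equivalence of fibers forward by $\pi_*$. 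This is closer in spirit to the threefold argument in Section 5, but it is heavier machinery than the surface case requires, and the paper deliberately postpones introducing covering families of tigers until after Theorem A.

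There is a genuine gap as written. Corollary \ref{gc} only supplies a \emph{countably dense} subset $Q$ of parameters for which $V_q$ is a fiber of $f$, and countably dense is strictly weaker than very general (the complement of countably many proper closed subsets). Your assertion that ``for a very general point $p\in X$, uniqueness of the minimal non-klt center then forces $F_p=\pi(V_{t(p)})$'' is not justified by the construction: for a very general $p$ not arising from $Q$, a preimage of $p$ in $Y$ lies on some fiber $V_t$, but $\pi(V_t)$ is the minimal non-klt center of $(X,\Delta+\Delta_t)$, which need not coincide with the minimal non-klt center $F_p$ of $(X,\Delta+\Delta_p)$. A secondary point is that Lemma \ref{exc} perturbs $\Delta_p$ in order to make the center exceptional, so one must also check that this perturbation does not change $F_p$ (or at least its numerical class). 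The paper's boundedness-plus-uncountability argument sidesteps both issues and yields the statement for all very general $p$ at once. Your nefness step, by contrast, is correct and is essentially the same as the paper's one-line ``since $\{F_p\}$ moves, the class $F$ is nef,'' just spelled out.
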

\begin{proof} The effective integral one cycles $F_p$ satisfy 
$F_p\leq\Delta_p\sim_\QQ -(K_X+\Delta)/\omega$ and hence form a bounded set in the 
Mori cone of curves.  As $\CC$ is uncountable, for $p\in X$ a very general point 
the numerical class $F:=F_p$ is well-defined. Since $\{F_p\}$ moves, the 
class $F$ is nef.
\end{proof}

The following lemma shows that if we assume the weight $\omega$ is large, then the 
non-klt centers $\{F_p\}$ on $X$ already possess a nearly fiber bundle structure 
analogous to a covering family of tigers.
\begin{lemma}\label{a} Assume that $\omega>3$, then $F^2=0$, i.e. $F_p\cap F_q=\emptyset$ 
for $p, q\in X$ two very general points. 
\end{lemma}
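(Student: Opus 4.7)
The plan is to argue by contradiction: suppose $F^2\geq 1$, and deduce $\omega\leq 3$ by combining the Hodge Index Theorem with Corollary \ref{min}. Note first that $F^2$ is a nonnegative integer, since for two very general distinct points $p,q$ the curves $F_p$ and $F_q$ must be distinct integral curves on the smooth projective surface $X$ (otherwise their common curve would contain every very general point, forcing it to be all of $X$), so $F^2=F_p\cdot F_q\in\ZZ_{\geq 0}$. Thus the negation of the lemma is exactly $F^2\geq 1$.

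First I would upgrade the ad hoc divisors $\Delta_p$ produced by Lemma \ref{sd} to an honest covering family of tigers so that Corollary \ref{min} applies. By Lemma \ref{exc} each $\Delta_p$ can be replaced by one making the minimal non-klt center $F_p$ exceptional, at the cost of weakening the weight to $\omega_n=\omega/(1+2/n)$, which tends to $\omega$ as $n\to\infty$. Proposition \ref{tiger} then organises these exceptional centers into a covering family of tigers of weight $\omega_n$; by Lemma \ref{dim} (applicable since $\omega_n>2$ for large $n$) this family is positive dimensional, hence one-dimensional. Corollary \ref{min} then yields
\[
\bigl(-(K_X+\Delta)\bigr)\cdot F \;\leq\; \frac{2\omega_n}{\omega_n-2}.
\]

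Second I would apply the Hodge Index Theorem on the smooth projective surface $X$ to the nef class $F$ (nefness coming from Lemma \ref{a'}). Since $F^2\geq 1>0$, one has $(L\cdot F)^2\geq L^2\cdot F^2$ for every divisor class $L$; taking $L=-(K_X+\Delta)$ and invoking the standing hypothesis $(K_X+\Delta)^2>(2\omega)^2$ gives
\[
\bigl((-(K_X+\Delta))\cdot F\bigr)^2 \;\geq\; (K_X+\Delta)^2\cdot F^2 \;>\; (2\omega)^2\cdot 1 \;=\; 4\omega^2,
\]
so in particular $(-(K_X+\Delta))\cdot F>2\omega$.

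Combining the two estimates forces $2\omega<2\omega_n/(\omega_n-2)$ for every sufficiently large $n$; letting $\omega_n\to\omega$ yields $2\omega\leq 2\omega/(\omega-2)$, i.e.\ $\omega-2\leq 1$, so $\omega\leq 3$, contradicting $\omega>3$. The main obstacle is really bookkeeping: one must verify that the passage from the naive divisors $\Delta_p$ to a bona fide covering family of tigers costs only an arbitrarily small amount of weight, and this is precisely what Lemma \ref{exc} together with Proposition \ref{tiger} guarantee. The strict inequality $\omega>3$ is exactly what makes $2\omega/(\omega-2)<2\omega$, so the contradiction closes for any sufficiently small weight loss $\omega-\omega_n$.
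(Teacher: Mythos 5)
Your proof is correct in its essentials but takes a genuinely different route from the paper's. The paper argues directly: from $F^2\geq 1$ and $F_p\leq\Delta_p$ it gets only the modest estimate $\deg(H|_{F_p})\geq F^2\geq 1$ (with $H=-(K_X+\Delta)/\omega$), which is exactly the threshold $k^k=1$ needed to invoke Theorem~\ref{cut}. It then cuts down the center to an isolated point and derives the contradiction from the Connectedness Lemma, using $\omega>3$ to keep the relevant divisor nef and big. You instead extract the much stronger estimate $(-(K_X+\Delta))\cdot F>2\omega$ via the Hodge Index Theorem and the hypothesis $(K_X+\Delta)^2>(2\omega)^2$, and then feed this into Corollary~\ref{min} as a black box; the numerics close for any $\omega>3$. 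Your route is shorter on the page but hides the cutting argument inside Corollary~\ref{min} (whose proof itself relies on Lemma~\ref{dim}, Proposition~\ref{bc}, and ultimately the same vanishing), whereas the paper's is more self-contained and explains exactly where the bound $\omega>3$ enters.

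Two technical points deserve care. First, you quote Lemma~\ref{exc} as ``making the minimal non-klt center $F_p$ exceptional,'' but Lemma~\ref{exc} in general produces a possibly different center $V_p$ after perturbing $\Delta_p$; Corollary~\ref{min} then bounds $\deg(L|_{V_t})$ for the fibers $V_t$ of \emph{that} family, which need not be numerically equivalent to the class $F$ you want to estimate. The cleaner path is to observe that $F_p\leq\Delta_p$ makes $F_p$ a \emph{pure} non-klt center of $(X,\Delta+\Delta_p)$ and to feed the pairs $(\Delta_p,F_p)$ directly into Corollary~\ref{gc}, which returns a covering family whose fibers over a countably dense set actually are the $F_q$'s; since a countably dense set meets the very general locus, the intersection number you bound really is $L\cdot F$. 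Second, Corollary~\ref{min} is stated for $D$ ample, while after passing to the minimal resolution $-(K_X+\Delta)$ is only nef and big; this is the same issue the paper side-steps via a footnote (add a small ample perturbation and shrink $\omega$ slightly), and it should be acknowledged explicitly in your version as well.
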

\begin{proof} Assume that $F_p\cap F_q\neq\emptyset$ for $p,q\in X$ two very general points. 
We can assume that $p\notin\Delta_q$ as $p\in X$ is very general. Since by Lemma \ref{a} 
the curve class $F=F_p$ is nef, for $H=-(K_X+\Delta)/\omega$ we have 
    \begin{align*} 1\leq F_p.F_q=F_p.F\leq \Delta_p.F={\rm deg}(H|_{F_p}),   
    \end{align*}
where the first inequality is true since $X$ is smooth. Since $H$ is big and nef, 
we can cut down the dimension of the non-klt centers by Theorem \ref{cut}\footnote{By 
adding a small multiple of $-(K_X+\Delta)$, we may assume that the inequality ${\rm deg}(H|_{F_q})\geq1$ 
is strict with a smaller modified $\omega$ and hence Theorem \ref{cut} applies.}. 

To be precise, pick $0<\delta_1\leq1$ such that the pair $(X,\Delta+\delta_1\Delta_p)$ 
is log canonical but not klt at $p$. If $(X,\Delta+\delta_1\Delta_p)=\{p\}$, then this 
contradicts the Connected Lemma \ref{con} as $p\notin\Delta_q$ and the non-klt locus ${\rm Nklt}(X,\Delta+\delta_1\Delta_p+\Delta_q)$ 
containing $p$ and $F_q$ is disconnected, while the divisor $-(K_X+\Delta+\delta_1\Delta_p+\Delta_q)$ 
is nef and big.  Hence we may assume that ${\rm Nklt}(X,\Delta+\delta_1\Delta_p)$ is one dimensional 
in a neighborhood of $p$. In particular, $F_p\subseteq{\rm Nklt}(X,\Delta+\delta_1\Delta_p)$ is 
the minimal non-klt center containing $p$. By Theorem \ref{cut}, there exists rational 
numbers $0<\delta\ll1$, $0<c<1$, and an effective $\QQ$-divisor $B_p\equiv H$ such that 
\mbox{${\rm Nklt}(X,\Delta+(1-\delta)\delta_1\Delta_p+cB_p)=\{p\}$} in a neighborhood of 
$p$. It follows that the set of non-klt centers ${\rm Nklt}(X,\Delta+(1-\delta)\delta_1\Delta_p+cB_p+\Delta_q)$ 
containing $p$ and $F_q$ is disconnected but the divisor $-(K_X+\Delta+(1-\delta)\delta_1\Delta_p+cB_p+\Delta_q)$ 
is nef and big as $\omega>3$. This again contradicts the Connected Lemma \ref{con}. 
\end{proof}

\begin{theorem}\label{I} Let $(X,\Delta)$ be an $\epsilon$-klt weak log del Pezzo 
surface. Then the anticanonical volume ${\rm Vol}((-K_X+\Delta))=(K_X+\Delta)^2$ satisfies
    \begin{align*} (K_X+\Delta)^2\leq\max\{64,\frac{8}{\epsilon}+4\}. 
    \end{align*}
\end{theorem}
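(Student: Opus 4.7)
The plan is to argue by contradiction: suppose $L^2 = (K_X+\Delta)^2 > \max\{64, 8/\epsilon+4\}$ where $L = -(K_X+\Delta)$, and take a rational $\omega$ with $4 < \omega < \sqrt{L^2}/2$. Since $\omega > 3$, Lemmas \ref{a'} and \ref{a} furnish a nef numerical class $F = [F_p]$ (well-defined for very general $p$) with $F^2 = 0$. The family $\{F_p\}$ covers $X$, so the curves $F_p$ are fibers of a morphism $\phi: X \to B$ onto a smooth curve. As weak log del Pezzo surfaces are rational, a general $F$ is isomorphic to $\mathbb{P}^1$; adjunction on the smooth $X$ gives $-K_X \cdot F = 2$, hence $L \cdot F = 2 - \Delta \cdot F \leq 2$.

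For $p, q$ very general on distinct fibers, $F_p \cap F_q = \emptyset$ by Lemma \ref{a}. Apply the Connectedness Lemma \ref{con} to $(X, \Delta+\Delta_p+\Delta_q)$: since $-(K_X+\Delta+\Delta_p+\Delta_q) = (1-2/\omega)L$ is nef and big for $\omega > 2$, the locus ${\rm Nklt}(X,\Delta+\Delta_p+\Delta_q)$ is connected. Because it contains the disjoint curves $F_p$ and $F_q$, it must also include a horizontal curve $C$ (with $\phi(C) = B$) whose coefficient in $\Delta + \Delta_p+\Delta_q$ is at least $1$. Setting $a := {\rm coeff}_C(\Delta) \leq 1-\epsilon$, we get ${\rm mult}_C \Delta_p + {\rm mult}_C \Delta_q \geq 1-a \geq \epsilon$, and by the symmetric very-general argument, ${\rm mult}_C \Delta_p \geq (1-a)/2 \geq \epsilon/2$. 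Intersecting $\Delta_p \geq (\epsilon/2)C$ with $F$ and using $C\cdot F \geq 1$ yields
\[
\frac{L\cdot F}{\omega} \;=\; \Delta_p \cdot F \;\geq\; \frac{\epsilon}{2}(C\cdot F) \;\geq\; \frac{\epsilon}{2},
\]
so $L\cdot F \geq \omega\epsilon/2$.

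Combining $\omega\epsilon/2 \leq L\cdot F \leq 2$ gives $\omega \leq 4/\epsilon$, which alone yields only the quadratic estimate $L^2 < 4\omega^2 \leq 64/\epsilon^2$. The announced linear bound $L^2 \leq 8/\epsilon+4$ demands a finer numerical argument, and this is the main obstacle. The anticipated refinement uses (i) the Alexeev-Mori style bound $-C^2 \leq 2/\epsilon$ for components $C$ of the boundary with coefficient close to $1-\epsilon$, provable by adjunction together with the $\epsilon$-klt condition, and (ii) the rank-two intersection identity $L^2 = 2(L\cdot F)(L\cdot C)/(F\cdot C) - (L\cdot F)^2 C^2/(F\cdot C)^2$, which holds whenever $L$ lies numerically in the sublattice spanned by $F$ and $C$ and, via Hodge-index projection, controls $L^2$ more generally since $L^2 > 0$. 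Together with $L\cdot F \leq 2$ and a separately established bound $L\cdot C \leq 1$, these ingredients will yield $L^2 \leq 4 + 8/\epsilon$. Carrying out this refinement carefully when $\rho(X) > 2$, and verifying that the bridge curve $C$ really does satisfy the required negativity of $C^2$, is the most delicate step.
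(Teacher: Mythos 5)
Your setup is correct and tracks the paper's through the construction of the nef numerical class $F$ with $F^2=0$ (Lemmas \ref{a'}, \ref{a}) and the existence of a bridge curve $E$ in the non-klt locus connecting $F_p$ and $F_q$. However, you yourself flag the gap: your argument as written only delivers $\omega\epsilon/2\leq L\cdot F\leq 2$, hence $\omega\leq 4/\epsilon$ and $L^2<64/\epsilon^2$, which is exactly the quadratic bound that the paper's Remark following Theorem \ref{I} dismisses as the trivial consequence of Corollary \ref{min}. The linear bound is the actual content of the theorem, and you have not established it.

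The sketch of the ``anticipated refinement'' has two concrete problems. First, the claimed bound $L\cdot C\leq 1$ is not correct and is not what the paper proves; the paper's estimate for the bridge curve, derived from the $\epsilon$-klt hypothesis via $\Delta=\Delta'+a_E E$ with $a_E\in[0,1-\epsilon)$, is
\begin{align*} -(K_X+\Delta).E\leq 2-\epsilon(-E^2),
\end{align*}
which can be close to $2$. Second, you do not carry out the Hodge-index argument. The paper's mechanism is to split into two cases: if $E^2\geq 0$ then $E$ is nef and a genus/adjunction computation on $E$ forces $\omega\leq 4$; if $E^2<0$ then $1\leq -E^2\leq 2/\epsilon$ and one applies the Hodge index inequality $(K_X+\Delta)^2\leq\frac{(-(K_X+\Delta)\cdot H_s)^2}{H_s^2}$ to the specific nef-and-big divisor $H_s=F+sE$ with $s=1/(-E^2)$. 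Plugging in $-(K_X+\Delta).F\leq 2$, the estimate above for $-(K_X+\Delta).E$, and $H_s^2=2sE.F+s^2E^2$, then simplifying, is what produces $8/\epsilon+4-2\epsilon$. Your rank-two ``intersection identity'' aims in this direction but is not an inequality valid for general $L$ (it would require $L$ to lie in the span of $F$ and $E$), and you never produce the correct degree bound on $E$. Both the choice of test divisor $H_s$ and the precise $\epsilon$-dependent bound on $L\cdot E$ are essential and missing; without them the proof does not close.

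Also note you do not need the $F_p$ to be literal fibers of a morphism to a curve, nor adjunction on $\mathbb{P}^1$; the paper obtains $-(K_X+\Delta).F\leq -(K_X+F).F\leq 2$ purely numerically from $\Delta.F\geq 0$, $F^2=0$, and the genus formula, which avoids the delicacies you raise about $\rho(X)>2$.
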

\begin{proof} Replacing $(X,\Delta)$ by its minimal resolution, we may assume 
that $X$ is smooth. Write $(K_X+\Delta)^2>(2\omega)^2$. For each general point $p\in X$, 
by Lemma \ref{sd}, there exists an effective \mbox{$\QQ$-divisor} $\Delta_p\sim_\QQ-(K_X+\Delta)/\omega$ 
such that $p\in{\rm Nklt}(X,\Delta+\Delta_p)$. From Lemma \ref{dim}, we may assume that 
$\omega>2$ and the unique minimal non-klt center $F_p\subseteq{\rm Nklt}(X,\Delta+\Delta_p)$ 
containing $p$ is one dimensional. Note that $F_p\leq\Delta_p$ for general $p\in X$. By Lemma 
\ref{a'} and \ref{a}, we may assume that $\omega>3$ and for very general $p\in X$ the numerical 
class $F$ of $F_p$ is well-defined and nef with $F^2=0$.

For two very general points $p,q\in X$, $\Delta_p.\Delta_q>0$ and hence 
$F_p={\rm Supp}(F_p)\subsetneqq{\rm Supp}(\Delta_p)$: Otherwise $\Delta_q\equiv\Delta_p\leq NF_p$ 
for some $N>0$ and $0<\Delta_p.\Delta_q\leq N^2F_p^2=N^2F^2=0$, a contradiction. 
By the Connectedness Lemma \ref{con}, ${\rm Nklt}(X,\Delta+\Delta_p+\Delta_q)\supseteq{F_p}\cup{F_q}$ 
is connected. Denote \mbox{$E_p={\rm Supp}(\Delta_p)-F_p\neq0$}. By Lemma \ref{a}, $F_p\cap F_q=\emptyset$ 
and hence $E_p$ must contain a connected curve $E\leq E_p$ such that $F_p.E\neq0$, $F_q.E\neq0$, 
and the set ${\rm Nklt}(X,\Delta+\Delta_p+\Delta_q)\supseteq{F_p}\cup{F_q}\cup{E}$. 
Furthermore, we can assume that $E$ is irreducible since $E.F_q\neq0$ as $F_q\equiv F_p$ 
for $q\in X$ a very general point. 

Suppose that $E^2\geq0$ and hence $E$ is nef. Since ${\rm Nklt}(X,\Delta+\Delta_p+\Delta_q)\supseteq{F_p}\cup{F_q}\cup{E}$, 
we have $\Delta+\Delta_p+\Delta_q\geq E$ and $(\Delta+\Delta_p+\Delta_q-E).E\geq0$. 
For $H=-(K_X+\Delta)/\omega$, we see that  
    \begin{align*} 2\geq 2-2g_a(E)\geq&-(K_X+E).E-(\Delta+\Delta_p+\Delta_q-E).E \\
				     =&-(K_X+\Delta+\Delta_p+\Delta_q).E \\
				     =&(\omega-2)H.E.
    \end{align*}
Write $\Delta_p=\Delta'_p+\alpha E$ where $\Delta_p'\wedge E=0$, $\Delta'_p\geq F_p$, and $\alpha >0$, 
we have
    \begin{align*} H.E=\Delta_p.E=(\Delta'_p+\alpha E).E\geq F_p.E\geq1.     
    \end{align*}
The last inequality follows from the fact that $X$ is smooth and $F_p.E>0$. Combine the 
two inequalities above, we obtain $\omega\leq4$.

Hence we may assume that $E^2<0$, and thus 
    \begin{align*} -2&\leq2g_a(E)-2=(K_X+E).E \\
                                    &=(K_X+\Delta).E+(1-\epsilon-a_E)E^2-\Delta'.E+\epsilon E^2 \leq\epsilon E^2,    
    \end{align*}
where $\Delta=\Delta'+a_E E$ with $\Delta'\wedge E=0$ and $a_E\in[0,1-\epsilon)$ 
by the $\epsilon$-klt condition. This implies that $1\leq-E^2\leq2/\epsilon$, where 
the first inequality follows from the fact that $E^2\in\ZZ$ as $X$ is smooth. 
Since $F^2=0$ for $F$ the numerical class of $F_p$ where $p\in X$ is very general, by 
Nakai's criterion the divisor $H_s=F+sE$ with $0<s\leq1/(-E^2)$ is nef and big. By 
the Hodge index theorem (see \cite[V 1.1.9(a)]{H:ag}), we get the inequality 
    \begin{equation}\label{3.0} (K_X+\Delta)^2\leq\frac{(-(K_X+\Delta).H_s)^2}{H_s^2}.
    \end{equation}
From $\Delta.F\geq0$ and $F^2=0$, we have that
    \begin{equation}\label{3.1} -(K_X+\Delta).F\leq-(K_X+F).F\leq2.      
    \end{equation}
Also for $\Delta=\Delta'+a_EE$ with $\Delta'\wedge E=0$ and $a_E\in[0,1-\epsilon)$, 
we have that 
    \begin{align*}\label{3.2} -(K_X+\Delta).E=&-K_X.E-\Delta'.E-a_E E^2 \\ 
                                         \leq &E^2+2-a_E E^2=(a_E-1)(-E^2)+2\leq2-\epsilon(-E^2).  \tag{4.3}
    \end{align*}
Put $s=1/(-E^2)$, all together we get 
    \begin{align*} (K_X+\Delta)^2&\leq\frac{(-(K_X+\Delta).(F+sE))^2}{H_s^2} \\
                                 &\leq\frac{(2+s(2-\epsilon(-E^2)))^2}{2sE.F+s^2E^2} \\
                                 &\leq(-E^2)(2-\epsilon+\frac{2}{-E^2})^2 \\
                                 &=(-E^2)(2-\epsilon)^2+4(2-\epsilon)+\frac{4}{-E^2} \\
                                 &\leq\frac{2}{\epsilon}(2-\epsilon)^2+4(2-\epsilon)+4 \\
                                 &=\frac{8}{\epsilon}+4-2\epsilon
    \end{align*}  
where the first inequality is \eqref{3.0}, the second inequality follows from 
\eqref{3.1}, \eqref{3.2}, and $F^2=0$, the third inequality is given by ignoring 
the term $sE.F\geq0$, and the last inequality uses \mbox{$1\leq-E^2\leq2/\epsilon$}. 
\end{proof}

\begin{remark} Note that by applying Corollary \ref{min} one can only obtain 
an upper bound of order $1/\epsilon^2$. Hence Theorem \ref{I} is a non-trivial 
result.   
\end{remark}

\section{Log Fano Threefolds of Picard Number One}
Let $(X,\Delta)$ be an $\epsilon$-klt $\QQ$-factorial log $\QQ$-Fano threefold 
of Picard number $\rho(X)=1$. Note that by hypothesis $X$ is $\epsilon$-klt and 
$-K_X$ is ample with $-K_X^3\geq{\rm Vol}(-(K_X+\Delta))=-(K_X+\Delta)^3$. Hence it 
is sufficient to assume that $X$ is an $\epsilon$-klt $\QQ$-factorial $\QQ$-Fano 
threefold of Picard number $\rho(X)=1$  and to find an upper bound of ${\rm Vol}(-K_X)=-K_X^3$. 
We will obtain an upper bound of the anticanonical volumes by studying covering 
families of tigers. The weight of any covering families of tigers in our study 
will always be the weight with respect to $-K_X$.

Let $X$ be an $\epsilon$-klt $\QQ$-factorial $\QQ$-Fano threefold of Picard 
number $\rho(X)=1$ and write the anticanonical volume ${\rm Vol}(-K_X)=-K_X^3>(3\omega)^3$ for some positive 
rational number $\omega$. Denote $D=-2K_X$, we have $D^3>(6\omega)^3$. By Lemma 
\ref{sd}, we can fix an affine open subset $U\subseteq X$ such that for each 
$p\in U$ there exists an effective divisor $\Delta_p\sim_\QQ D/\omega$ with 
${\rm mult}_p\Delta_p\geq6$. 
We pick divisors $\Delta_p$'s in the following systematic way so that we can 
control their multiplicities uniformly.

\subsection{Construction}\label{const}
Let $\Delta_U\subseteq U\times U$ be the diagonal and $\mathcal{I}_\mathcal{Z}$ 
be the ideal sheaf of the subvariety $\mathcal{Z}=\overline{\Delta_U}\subseteq X\times U$. 
For each $p\in U$, by the existence of $\QQ$-divisor $\Delta_p\sim_\QQ D/\omega$ 
with ${\rm mult}_p\Delta_p\geq6$, there exists $m_p>0$ such that $L_{m_p}=m_pD/\omega$ 
is Cartier and $H^0(X,L_{m_p}\otimes\mathcal{I}_p^{\otimes 6m_p})\neq0$. 
In particular, we can write $U=\cup U_m$ where $m>0$ runs through all sufficiently 
divisible integers such that $L_m=mD/\omega$ is Cartier and $U_m=\{p\in U|H^0(X,L_m\otimes\mathcal{I}_p^{\otimes 6m})\neq0\}$. 
Moreover, each $U_m$ is locally closed in $X$ by \cite[III, Theorem 12.8]{H:ag} and 
$X=\cup\overline{U_m}$. Since the base field $\CC$ is uncountable, $X$ can not be 
a countable union of locally closed subsets. Thus there exists some $m>0$ such 
that $U_m$ is dense in $X$. 

Fix an $m>0$ such that $L_m=mD/\omega$ is Cartier and $U_m=\{p\in U|H^0(X,L_m\otimes\mathcal{I}_p^{\otimes 6m})\neq0\}$ 
is dense in $X$. Denote ${\rm pr}_X:X\times U\rightarrow X$ and ${\rm pr}_U:X\times U\rightarrow U$ 
the projection maps. Since ${\rm pr}_U:X\times U\rightarrow U$ is flat, by 
\cite[III,Theorem 12.11]{H:ag}, after restricting to a smaller open affine subset 
of $U$, we can assume that the map 
\begin{align*} ({\rm pr}_U)_*({\rm pr}_X^*L_m\otimes\mathcal{I}_\mathcal{Z}^{\otimes 6m})\otimes\CC(p)\rightarrow 
		H^0(X,L_m\otimes\mathcal{I}_p^{\otimes 6m}),
\end{align*}
is an isomorphism for each $p\in U$ where $\mathcal{I}_p$ is the ideal sheaf of $p\in U$. 
Since $U_m$ is dense in $U$, the sheaf $({\rm pr}_U)_*({\rm pr}_X^*L_m\otimes\mathcal{I}_\mathcal{Z}^{\otimes 6m})\neq0$ 
on $U$ and hence $H^0(X\otimes U,{\rm pr}_X^*L\otimes\mathcal{I}_\mathcal{Z}^{\otimes 6m})\neq0$ 
as $U$ is affine. Let $s\in H^0(X\otimes U,{\rm pr}_X^*L\otimes\mathcal{I}_\mathcal{Z}^{\otimes 6m})$ 
be a nonzero section with $F={\rm div}(s)$ the corresponding divisor on $X\times U$. 
For each $p\in U$, denote $F_p=F\cap(X\times\{p\})$ the associated divisor on 
$X\cong X\times\{p\}$. Since ${\rm mult}_\mathcal{Z}(F)\geq6m$, by Lemma \ref{fiber} 
below, the $\QQ$-divisor $\Deltaʼ_p=F_p/m\sim_\QQ D/\omega$ on $X$ satisfies 
${\rm mult}_p\Delta_p\geq6$ for general $p\in U$. 

\begin{lemma}{$($\cite[Lemma 5.2.11]{Laz:p1}$)$}\label{fiber} Let $g:M\rightarrow T$ 
be a morphism of smooth varieties, and suppose that $\mathcal{Z}\subseteq M$ is an irreducible 
subvariety dominating $T$:
\begin{center}
\begin{tikzpicture}
\node (A) at (-2,0) {$\mathcal{Z}$};
\node (B) at (0,-2){ $T.$};
\node (C) at (2,0){$M$};
\path[->] (A) edge node[below]{}(B);
\path[->] (C) edge node[right]{$g$}(B);
\path[right hook->] (A) edge node[right]{}(C);
\end{tikzpicture}
\end{center}
Let $F\subseteq M$ be an effective divisor. For a general point 
$t\in T$ and an irreducible component $\mathcal{Z}_t'\subseteq \mathcal{Z}_t$,
${\rm mult}_{\mathcal{Z}'_t}(M_t,F_t)={\rm mult}_{\mathcal{Z}}(M,F)$, 
where ${\rm mult}_\mathcal{Z}(M,F)$ is the multiplicity of the divisor $F$ 
on $M$ along a general point of the irreducible subvariety $\mathcal{Z}\subseteq M$ 
and similarly for ${\rm mult}_{\mathcal{Z}'_t}(M_t,F_t)$.
\end{lemma}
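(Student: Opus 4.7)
The plan is to express both multiplicities as orders of vanishing along a Cartier divisor on a common blow-up, and then to use compatibility of blow-ups with flat base change in order to conclude.

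First I would reduce to a nice local picture. Since $g$ is a morphism of smooth varieties and $\mathcal{Z}\to T$ is dominant, generic flatness and generic smoothness (we are in characteristic zero) allow me to shrink $T$ and $M$ so that: (i) $\mathcal{Z}\to T$ is flat, (ii) $g$ is smooth along a dense open subset of $\mathcal{Z}$, and (iii) for $t\in T$ general, the fiber $M_t$ is smooth and each irreducible component of $\mathcal{Z}_t$ is a smooth subvariety of $M_t$, with distinct components disjoint. After these reductions, the question becomes one about a single component $\mathcal{Z}'_t$ sitting in a Zariski neighborhood of its generic point.

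Next I would use the standard blow-up description of multiplicity. Let $\pi:\widetilde{M}=\mathrm{Bl}_\mathcal{Z} M\to M$ be the blow-up along $\mathcal{Z}$, and let $E\subseteq\widetilde{M}$ be the exceptional Cartier divisor. Then
\begin{align*}
\mathrm{mult}_\mathcal{Z}(M,F)=\mathrm{ord}_E(\pi^*F),
\end{align*}
and, analogously, for $t$ general and a component $\mathcal{Z}'_t$ of $\mathcal{Z}_t$, if $\pi_t:\widetilde{M_t}\to M_t$ is the blow-up along $\mathcal{Z}'_t$ with exceptional divisor $E_t$, then $\mathrm{mult}_{\mathcal{Z}'_t}(M_t,F_t)=\mathrm{ord}_{E_t}(\pi_t^*F_t)$.

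The key step is to identify the fiber of $\widetilde{M}\to T$ over a general $t$ with the blow-up of $M_t$ along $\mathcal{Z}_t$. This is the standard fact that blowing up commutes with flat base change: since $\mathcal{Z}\to T$ is flat (after shrinking) and a general $t$ avoids the finite locus where the fiber jumps, the fiber $\widetilde{M}_t$ is $\mathrm{Bl}_{\mathcal{Z}_t}(M_t)$, which restricted to a neighborhood of any chosen component $\mathcal{Z}'_t$ is $\widetilde{M_t}$. Under this identification, $E|_{\widetilde{M}_t}=\sum E_t$ (summed over components) and $(\pi^*F)|_{\widetilde{M}_t}=\pi_t^*F_t$. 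The coefficient of $E$ in the Cartier divisor $\pi^*F$ is constant on the flat family, and equals the coefficient of each $E_t$ in $\pi_t^*F_t$, proving the equality of multiplicities.

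The main obstacle is the precise compatibility of the blow-up with restriction to a general fiber, especially when $\mathcal{Z}_t$ has several components; once flatness of $\mathcal{Z}\to T$ is arranged (by shrinking) and a general $t$ is chosen so that all components are smooth and disjoint, this reduces to the irreducible case and is essentially formal. A minor but worth-noting subtlety is that even after the reductions, one should check that $\pi^*F$ is a genuine Cartier divisor whose coefficient along $E$ does not drop on a general fiber; this is the usual upper-semicontinuity of vanishing order, which in a flat family with equidimensional behavior is in fact constant on a dense open set of $T$.
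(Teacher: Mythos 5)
The paper gives no proof of this lemma; it is cited verbatim from Lazarsfeld \cite[Lemma 5.2.11]{Laz:p1}, whose argument is a direct local coordinate computation. Near a general point of $\mathcal{Z}$ one chooses analytic coordinates $(s,y)$ on $M$ with $s$ pulled back from $T$ and $\mathcal{Z}$ cut out by a subset of the $y$-variables; a local equation of $F$ is then written as a power series in $y$ with coefficients depending on $s$, and its order in $y$ (which computes ${\rm mult}_\mathcal{Z}(M,F)$) is unchanged after specializing to a general $s=s_0$, giving ${\rm mult}_{\mathcal{Z}'_t}(M_t,F_t)$. Your route via the blow-up of $\mathcal{Z}$ is genuinely different and geometrically appealing, and is correct in outline. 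However, the step you describe as ``essentially formal'' deserves more care than it gets. The theorem ``blowing up commutes with flat base change'' does not apply as stated: the inclusion of a fiber $M_t\hookrightarrow M$ corresponds to $\{t\}\hookrightarrow T$, which is not a flat morphism. The statement you actually need --- that $({\rm Bl}_{\mathcal{Z}}M)$ restricted over a general $t$ (after shrinking $T$) is ${\rm Bl}_{\mathcal{Z}_t}(M_t)$ --- is true but requires more than flatness of $\mathcal{Z}\to T$: it follows from generic flatness of the Rees algebra $\bigoplus_n\mathcal{I}_{\mathcal{Z}}^n$ over $T$, or more directly from the observation that near a general point of $\mathcal{Z}$ the map $\mathcal{Z}\to T$ is smooth, so the pair $\mathcal{Z}\subseteq M$ is a local product over $T$ and blowing up manifestly commutes with restriction to the general fiber. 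Once this input is supplied, the rest of your argument --- identifying $E|_{\widetilde{M}_t}$ with $E_t$, noting that the strict transform of $F$ does not acquire an exceptional component upon restriction to a general fiber, and comparing coefficients of the exceptional divisor --- does prove the lemma. Lazarsfeld's coordinate argument is shorter and sidesteps the base-change issue entirely; your blow-up formulation makes the role of exceptional multiplicities and strict transforms explicit at the cost of requiring this extra justification.
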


For a given collection of $\QQ$-divisors $\{\Deltaʼ_p=F_p/m\sim_\QQ D/\omega|p\in U\ {\rm general}\}$ 
associated to a nonzero section in $H^0(X\otimes U,{\rm pr}_X^*L\otimes\mathcal{I}_\mathcal{Z}^{\otimes 6m})$ 
as above, by Lemma \ref{exc}, we can modify the $\Delta_p$'s so that the unique 
non-klt centers $V_p\subseteq{\rm Nklt}(X,\Delta_p)$ passing through $p$ are 
exceptional. By Lemma \ref{tiger} (or in general Corollary \ref{gc}), we can construct 
covering families of tigers from these divisors. 

In order to obtain an upper bound of $\omega$, which is sufficient for bounding the anticanonical volumes, 
we will pick up a ``well-behaved'' nonzero section $s\in H^0(X\otimes U,p^*L\otimes\mathcal{I}_\mathcal{Z}^{\otimes 6m})$ 
and study the corresponding covering families of tigers.

\subsection{Cases} By Section \ref{const}, there exists an open affine subset $U\subseteq X$ and 
an integer $m>0$ such that $H^0(X\otimes U,{\rm pr}_X^*L\otimes\mathcal{I}_\mathcal{Z}^{\otimes 6m})\neq0$. 
Let $s\in H^0(X\times U,{\rm pr}_X^*L\times\mathcal{I}_\mathcal{Z}^{\otimes 6m})$ be a 
nonzero section with divisor $F={\rm div}(s)$ on $X\times U$ and 
$\{\Deltaʼ_p=F_p/m\sim_\QQ D/\omega|p\in U\}$ be the associated collection 
of $\QQ$-divisors. We consider two cases: 
\begin{enumerate}
	\item (\textbf{Small multiplicity}) For each irreducible component $\mathcal{W}$ 
              of ${\rm Supp}(F)$ passing through $\mathcal{Z}$, ${\rm mult}_{\mathcal{W}}(F)\leq3m$, 
	      i.e., for general $p\in U$ we have ${\rm mult}_{W}(\Delta_p)\leq3$ for 
	      any irreducible component $W$ of ${\rm Supp}(\Delta_p)$ passing through $p$. 
	      After differentiating $F$, we will construct a ``well-behaved'' 
              covering family of tigers of dimension one. We will derive an upper bound 
	      of $\omega$ by studying this covering family of tigers. See Section \ref{IV.1}.
        \item (\textbf{Big multiplicity}) There exists an irreducible component $\mathcal{W}$ 
              of ${\rm Supp}(F)$ passing through $\mathcal{Z}$ with multiplicity ${\rm mult}_{\mathcal{W}}(F)>3m$, 
	      i.e., for general $p\in U$ we have ${\rm mult}_{W}(\Delta_p)>3$ for 
	      some irreducible component $W$ of ${\rm Supp}(\Delta_p)$ passing through $p$.
              We will construct a covering family of tigers of dimension two and derive 
              an upper bound of $\omega$ by studying the geometry of this covering family 
              of tigers. See Section \ref{IV.2}.
\end{enumerate}

To pick a ``well-behaved'' nonzero section in $H^0(X\otimes U,{\rm pr}_X^*L\otimes\mathcal{I}_\mathcal{Z}^{\otimes 6m})$, 
we will apply the following proposition.

\begin{proposition}{$($\cite[Proposition 5.2.13]{Laz:p1}$)$}\label{diff} Let 
$X$ and $U$ be smooth irreducible varieties, with $U$ affine, and suppose that 
$\mathcal{Z}\subseteq \mathcal{W}\subseteq X\times U$ are irreducible subvarieties 
such that $\mathcal{W}$ dominates $X$. Fix a line bundle $L$ on $X$, and suppose we are 
given a divisor $F\in|{\rm pr}_X^*(L)|$ on $X\times U$. Write $l={\rm mult}_\mathcal{Z}(F)$ and $k={\rm mult}_\mathcal{W}(F)$.    
After differentiating in the parameter directions, there exists a divisor 
$F'\in|{\rm pr}_X^*(L)|$ on $X\times U$ with the property that ${\rm mult}_\mathcal{Z}(F')\geq l-k, \ {\rm and}\ \mathcal{W}\nsubseteq{\rm Supp}(F')$.    
\end{proposition}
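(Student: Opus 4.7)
The plan is to construct $F'$ by differentiating $F$ in the parameter directions along $U$. The crucial observation is that $\mathrm{pr}_X^*L$ is pulled back from $X$ and hence trivial along the $U$-factor, so its transition functions do not involve $u$-coordinates. Consequently, any $\mathbb{C}$-linear differential operator acting on $\mathcal{O}(U)$ carries sections of $\mathrm{pr}_X^*L$ to sections of $\mathrm{pr}_X^*L$: since $U$ is affine one has
$$H^0(X\times U,\,\mathrm{pr}_X^*L)\ \cong\ H^0(X,L)\otimes_{\mathbb{C}}\mathcal{O}(U),$$
and any such operator $D$ acts on the second factor, sending a global section $s=\sum_i s_i\otimes g_i$ to $\sum_i s_i\otimes Dg_i$. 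Moreover, each application of a first-order derivative in a $u$-direction reduces the multiplicity along an irreducible subvariety by at most one, so an order-$j$ operator reduces it by at most $j$.

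To choose the right $D$ I would use the hypothesis that $\mathcal{W}$ dominates $X$. At a general smooth point $(x_0,u_0)\in\mathcal{W}$ the differential $d\,\mathrm{pr}_X\colon T_{(x_0,u_0)}\mathcal{W}\twoheadrightarrow T_{x_0}X$ is surjective, so the conormal space of $\mathcal{W}$ at $(x_0,u_0)$ injects into $\mathrm{pr}_U^*\Omega^1_U$. After an \'etale or analytic change of $u$-coordinates we may therefore assume $\mathcal{W}$ is cut out locally by $u_{d+1}=\cdots=u_m=0$, where $d=\dim\mathcal{W}-\dim X$. A local defining equation $f$ of $F$ then expands as $\sum_I c_I(x,u_1,\dots,u_d)\,u_{d+1}^{i_{d+1}}\cdots u_m^{i_m}$, and $\mathrm{mult}_{\mathcal{W}}(F)=k$ says the lowest-order monomials present have total degree exactly $k$ in $u_{d+1},\dots,u_m$. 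Pick any such monomial with nonzero coefficient $c=c(x,u_1,\dots,u_d)$ and form the order-$k$ operator $D=\partial_{u_{d+1}}^{a_{d+1}}\cdots\partial_{u_m}^{a_m}$: then $Df|_{\mathcal{W}}$ is a nonzero multiple of $c$, so $\mathcal{W}\nsubseteq\mathrm{Supp}(Df)$, while $\mathrm{mult}_{\mathcal{Z}}(Df)\ge l-k$ by the first paragraph.

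Finally I would globalize by realizing the local $\partial_{u_i}$'s as $\mathcal{O}(U)$-linear combinations of a finite generating set of derivations of the smooth affine scheme $U$; applying the resulting global order-$k$ operator $D$ to $s$ produces a section $Ds\in H^0(X\times U,\mathrm{pr}_X^*L)$, and its divisor $F':=\mathrm{div}(Ds)$ satisfies both required properties since $\mathrm{mult}_{\mathcal{Z}}(F')\ge l-k$ and $\mathcal{W}\nsubseteq\mathrm{Supp}(F')$ are tested only at generic points of these irreducible subvarieties, where the local Taylor-expansion calculation applies. The main obstacle I expect is precisely this local-to-global step: verifying that the \'etale coordinate change on $U$ and the replacement of local coordinate derivations by honest global differential operators on $\mathcal{O}(U)$ still yield an operator whose action on $s$ produces the required multiplicity drop and non-vanishing. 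Once this bookkeeping is sorted, the conclusion follows directly from the explicit monomial extraction above.
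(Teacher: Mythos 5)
The paper gives no proof of this statement, only a citation to \cite[Proposition 5.2.13]{Laz:p1}; your argument follows the same differentiation idea found there. Your main observations are right: $\mathrm{pr}_X^*L$ has transition data constant in $u$, so differential operators on the affine $U$ act on $H^0(X\times U,\mathrm{pr}_X^*L)\cong H^0(X,L)\otimes_{\mathbb C}\mathcal{O}(U)$; a first-order vertical derivative drops multiplicity along an irreducible subvariety by at most one; and dominance of $\mathcal{W}$ over $X$ forces the conormal space $N^*_{\mathcal{W}}$ at a general point to inject into $\mathrm{pr}_U^*\Omega^1_U$, which is exactly what lets a vertical operator of order $k=\mathrm{mult}_{\mathcal{W}}(F)$ cut the order of vanishing along $\mathcal{W}$ to zero.

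The one imprecise step is the claim that a change of \emph{$u$-coordinates alone} brings $\mathcal{W}$ to the form $\{u_{d+1}=\cdots=u_m=0\}$. This cannot work in general because $\mathcal{W}$ need not be a product locally (think of a graph $\{u=\varphi(x)\}$); any flattening coordinate change must mix the $x$'s and $u$'s. The conclusion is unaffected, but the Taylor bookkeeping should be run in coordinates on $X\times U$: after a linear change of $u$-coordinates so that $du_{d+1},\dots,du_m$ span the image of $N^*_{\mathcal{W}}$ at the base point $(x_0,u_0)$, the implicit function theorem gives local coordinates $(x,u_1,\dots,u_d,v_{d+1},\dots,v_m)$ on $X\times U$ with $\mathcal{W}=\{v=0\}$, and the derivations $\partial_{v_j}$ are $\mathcal{O}$-linear combinations of $\partial_{u_{d+1}},\dots,\partial_{u_m}$ (near-identity at the base point), so they are still vertical over $X$. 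Equivalently, the clean invariant statement is that $\mathrm{pr}_U^*T_U\to N_{\mathcal{W}}$ is surjective at a general point of $\mathcal{W}$, so the pairing of the degree-$k$ symbol of a differential operator on $U$ against the class $[f]\in\mathrm{Sym}^kN^*_{\mathcal{W}}$ is nondegenerate, and some choice of order-$k$ operator $D$ gives $Df$ nonvanishing along $\mathcal{W}$. The globalization step you flag is handled exactly by global generation of $\mathcal{D}^{\leq k}_U$ (equivalently $T_U$) on the smooth affine $U$, as you anticipate.
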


\subsection{Small multiplicity}\label{IV.1}
Let $X$ be an $\epsilon$-klt $\QQ$-Fano threefold of Picard number one 
and write ${\rm Vol}(-K_X)=-K_X^3>(3\omega)^3$ for some positive rational 
number $\omega$. Denote $D=-2K_X$, we have $D^3>(6\omega)^3$. By Section \ref{const}, 
there is an integer $m>0$ such that $L=mD/\omega$ is Cartier and an open affine 
subset $U\subseteq X$ such that $H^0(X\times U,{\rm pr}_X^*L\otimes\mathcal{I}_\mathcal{Z}^{\otimes 6m})\neq0$. 
We fix a nonzero section $s\in H^0(X\times U,{\rm pr}_X^*L\otimes\mathcal{I}_\mathcal{Z}^{\otimes 6m})$ 
with $F={\rm div}(s)$ on $X\times U$. 

\begin{proposition}\label{one} With the set up above. Assume that $\omega>4$. 
If we are in the case where all the irreducible components $\mathcal{W}$ 
of ${\rm Supp}(F)$ passing through $\mathcal{Z}$ satisfy ${\rm mult}_{\mathcal{W}}(F)\leq3m$, 
then $\omega<8/\epsilon+4$. In particular, there is an upper bound for the volume 
    \begin{align*} {\rm Vol}(-K_X)=-K_X^3\leq (\frac{24}{\epsilon}+12)^3.    
    \end{align*} 
\end{proposition}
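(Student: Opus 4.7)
The plan, following the paradigm of Example \ref{guiding} and Theorem \ref{I}, is to build a covering family of tigers of dimension one, then extract matching upper and lower bounds on $-K_X \cdot V_t$ whose comparison will pin $\omega$ into the stated range.

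First I would use the small-multiplicity hypothesis to force one-dimensional centers. Applying Proposition \ref{diff} to $F$ and any component $\mathcal{W}$ of ${\rm Supp}(F)$ through $\mathcal{Z}$ produces $F' \in |{\rm pr}_X^*L|$ with ${\rm mult}_{\mathcal{Z}}(F') \geq 6m - 3m = 3m$ while deleting $\mathcal{W}$ from the support. Iterating over the finitely many divisorial components through $\mathcal{Z}$, and absorbing a small loss into $\omega$ at each step (harmless since the target is an open inequality), for general $p \in U$ I obtain $\Delta_p \sim_\QQ D/\omega$ with ${\rm mult}_p \Delta_p \geq 3$ but with no divisorial non-klt contribution through $p$; hence the minimal non-klt center $V_p$ containing $p$ has dimension at most one. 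Lemma \ref{exc} then makes the $V_p$'s exceptional, and Proposition \ref{tiger} (or Corollary \ref{gc}) assembles them into a covering family of tigers $\pi : Y \to X$, $f : Y \to B$ of weight $\omega$ with respect to $D = -2K_X$ (equivalently, weight $\omega/2$ with respect to $-K_X$). Since $\omega > 4$, the weight with respect to $-K_X$ exceeds $2$, so Lemma \ref{dim} ensures positive dimensionality; by construction the fiber dimension is exactly one.

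Next I would squeeze $\omega$ from two sides. The upper bound is immediate from Corollary \ref{min} applied with $D = -2K_X$: $-2K_X \cdot V_t \leq 2\omega/(\omega-2)$, hence $-K_X \cdot V_t \leq \omega/(\omega-2)$. For the lower bound I mirror Example \ref{guiding}. Exceptionality of the family provides a unique horizontal non-klt place, represented by a divisor $E$ on a suitable model of $Y$. For two very general points $p, q$, the Connectedness Lemma \ref{con} applied to the pulled-back pair $\pi^*(K_X + \Delta_p + \Delta_q)$, which is antinef and big when $\omega > 4$, forces the fibers $V_p$ and $V_q$ to be connected through $E$ in the non-klt locus. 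Combining this connectedness with the $\epsilon$-klt bound on the discrepancy of $E$ over $X$, a symmetry argument in $p, q$ yields the pullback inequality $\pi^*\Delta_p \geq \epsilon E$ (up to a rational constant); intersecting with a general fiber $V_t$ then gives a lower bound on $-K_X \cdot V_t$ of the order $\epsilon\omega$.

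Comparing the two bounds produces a linear inequality in $\omega$ whose simplification is exactly $\omega < 8/\epsilon + 4$; together with ${\rm Vol}(-K_X) > (3\omega)^3$ this gives the volume estimate $-K_X^3 \leq (24/\epsilon + 12)^3$. The principal obstacle is the lower-bound step: in dimension three $Y$ need not be $\QQ$-factorial, so the divisor $E$ lives on a birational modification of $Y$, and both the multiplicity computation along $E$ and the intersection $E \cdot V_t$ must be carried out on a carefully chosen $\QQ$-factorial (or sufficiently smooth) model. Tracking how the $\epsilon$-klt condition on $X$ propagates through $\pi$ is also considerably subtler than in the surface setting of Example \ref{guiding}, since several horizontal divisors may compete for the role of $E$. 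A secondary difficulty is that a single application of Proposition \ref{diff} removes only one component, so the reduction in Step~1 to one-dimensional $V_p$ may require iterated differentiation or a supplementary use of Theorem \ref{cut} to cut down any residual two-dimensional centers.
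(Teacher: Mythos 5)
The surrounding setup (differentiation via Proposition \ref{diff}, reduction to small multiplicities, one-dimensionality of $V_p$, construction of a covering family of tigers, and the upper bound on $-K_X\cdot V_t$ from Corollary \ref{min}) matches the paper. The gap is in your lower-bound step, which is the heart of the proposition.

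You model the argument on Example \ref{guiding}, positing a single horizontal divisor $E$ on (a model of) $Y$ that connects $V_p$ and $V_q$ in the non-klt locus, and propose to read off $\pi^*\Delta_p\geq\epsilon E$ and then intersect with a general fiber. But in the threefold case with a one-dimensional covering family of tigers, the connecting object produced by the Connectedness Lemma is in general a one-dimensional cycle $C_{p,q}$ on $Y$, not a divisor: $Y$ has dimension three, the fibers are curves, and there is no reason for the locus joining two fibers inside $\pi^{-1}({\rm Nklt}(K_X+\Delta_p+\Delta_q))$ to be two-dimensional. So there need be no divisor $E$ at all, and the ``unique horizontal non-klt place'' you invoke is not what exceptionality of the tiger family gives you (that only refers to the uniqueness of the non-klt place over each individual fiber $V_p$, not to a horizontal non-klt divisor). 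The paper's proof instead picks an irreducible component $C$ of $C_{p,q}$ meeting $V_q$ and, by symmetry, derives ${\rm mult}_C(\pi^*\Delta_p)>\epsilon/2$; the quantity to be bounded below is then $\pi^*\Delta_p\cdot Y_{b'}$ for a general fiber $Y_{b'}$ over $f(C)$.

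This is also where the differentiation plays its essential and rather different role, which you misattribute to removing divisorial non-klt centers. The real obstruction is that $C$ may lie on a component $\Sigma\subseteq{\rm Supp}(\pi^*\Delta_p)$ that dominates $f(C)$, in which case the general fiber $Y_{b'}$ is wholly contained in $\Sigma$, and the intersection number $\pi^*\Delta_p\cdot Y_{b'}$ does not see the multiplicity along $C$ through that component. Differentiating (and taking $m\gg0$ by passing to $L^{\otimes k}$) forces the coefficient $\lambda$ of such a component to be $\leq 1/m$, so writing $\pi^*\Delta_p=\Delta'+\lambda\Sigma$ with $\Delta'\wedge\Sigma=0$ one still gets $\pi^*\Delta_p\cdot Y_{b'}\geq{\rm mult}_C\Delta'>\epsilon/2-1/m$. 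Without this explicit multiplicity-splitting along the curve $C$, your claimed estimate $\pi^*\Delta_p\geq\epsilon E$ has no object $E$ to stand on, and the intersection with $Y_{b'}$ cannot be carried out. A minor secondary point: Lemma \ref{dim} and Corollary \ref{min} are stated and proved with $D=-(K_X+\Delta)$, so one should apply Corollary \ref{min} with $D=-K_X$ and weight $\omega/2$ (giving $\tfrac{2}{\omega/2-2}\geq\tfrac{2}{\omega}(-K_X\cdot V_t)$), not with $D=-2K_X$ and weight $\omega$ as you wrote.
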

\begin{proof} Let $M$ be the maximum of ${\rm mult}_{\mathcal{W}}(F)$ among all 
the irreducible components $\mathcal{W}$ of ${\rm Supp}(F)$ passing through $\mathcal{Z}$. 
Then $M\leq3m$ by the hypothesis. For a fixed irreducible component $\mathcal{W}$ of ${\rm Supp}(F)$ 
passing through $\mathcal{Z}$, we can apply Proposition \ref{diff} to $F$. 
We obtain a divisor $F'\in|{\rm pr}_X^*(L)\otimes\mathcal{I}_\mathcal{Z}^{\otimes 6m-M}|$ 
with the property that     
    \begin{align*} {\rm mult}_\mathcal{Z}(F')\geq(6m-M)\geq3m,\ {\rm and}\ 
		    \mathcal{W}\nsubseteq{\rm Supp}(F').
    \end{align*}
Since there are only finitely many irreducible components of ${\rm Supp}(F)$ passing 
through $\mathcal{Z}$, by taking a generic differentiation, it follows that for 
a general divisor $F''\in|{\rm pr}_X^*(L)\otimes\mathcal{I}_\mathcal{Z}^{\otimes 6m-M}|$ 
we have $\mathcal{W}\nsubseteq{\rm Supp}(F'')$ for any irreducible component $\mathcal{W}$ 
of ${\rm Supp}(F)$ passing through $\mathcal{Z}$. In particular, the base locus 
${\rm Bs}(|{\rm pr}_X^*L\otimes\mathcal{I}_\mathcal{Z}^{\otimes 6m-M}|)$ 
contains no codimension one components in a neighborhood of $\mathcal{Z}$. 

Let $G$ be a general divisor in $|{\rm pr}_X^*L\otimes\mathcal{I}_\mathcal{Z}^{\otimes 6m-M}|$ 
and $\Delta_p=G_p/m$ for $p\in U$ general the corresponding $\QQ$-divisors on $X$. 
It follows that $p\in{\rm Nklt}(K_X+\Delta_p)$ as ${\rm mult}_p\Delta_p\geq3$. 
The minimal non-klt center $V_p\subseteq{\rm Nklt}(K_X+\Delta_p)$ 
passing through $p$ must be positive dimensional by Lemma \ref{dim} as the weight of 
$\Delta_p$ is $\omega/2>2$. Note that we may replace $|{\rm pr}_X^*L\otimes\mathcal{I}_\mathcal{Z}^{\otimes 6m-M}|$ 
by $|{\rm pr}_X^*L^{\otimes k}\otimes\mathcal{I}_\mathcal{Z}^{\otimes k(6m-M)}|$ for any $k\geq1$ 
and hence we may assume that $m\gg0$. In particular, we have $0\leq{\rm mult}_{W}\Delta_p\ll1$ for $W$ any irreducible component 
of ${\rm Supp}(\Delta_p)$, and $V_p$ can only be one-dimensional.

Let $\pi:Y\rightarrow X$ and $f:Y\rightarrow B$ be a one dimensional covering family 
of tigers of weight $\omega'\geq\omega/2$ constructed from the $\Delta_p$'s above 
by Lemma \ref{exc} and Lemma \ref{tiger}. By abuse of notation, we still denote $\Delta_p$'s 
the divisors associated to this covering family of tigers. 

Choose $p, q\in U\subseteq X$ general. By Lemma \ref{con}, the non-klt locus 
${\rm Nklt}(\pi^*(K_X+\Delta_p+\Delta_q))\supseteq V_p\cup V_q$ 
on $Y$ is connected and it contains a one dimensional cycle $C_{p,q}$ connecting 
$V_p$ and $V_q$. Since $Y$ is normal, an irreducible component $C$ of $C_{p,q}$ 
intersecting $V_q$ satisfies $C\cap Y_{\rm sm}\neq\emptyset$ for $p, q\in X$ general. 
Since $C$ is in ${\rm Nklt}(\pi^*(K_X+\Delta_p+\Delta_q))$, by symmetry we 
have ${\rm mult}_C(\pi^*(\Delta_p))>\epsilon/2$. 

Suppose that $\Sigma\subseteq{\rm Supp}(\pi^*(\Delta_p))$ is an irreducible component containing 
$C$. If $f(\Sigma)=f(C)$ is a curve, then $V_p\subseteq\Sigma=f^{-1}(f(C))$ as the general fiber of $f:Y\rightarrow B$ 
is irreducible. Moreover, we can assume that $\Sigma$ is not $\pi$-exceptional as there are only 
finitely many $\pi$-exceptional divisors and we choose $p\in X$, and hence $V_p$, general. 
Note that there can only be one such $\Sigma$ once we fix $p\in X$ and $C$. In particular, $\Sigma\subseteq{\rm Supp}(\pi^{-1}_*(\Delta_p))$ 
is an irreducible component containing $V_p$, and we can write $\pi^*(\Delta_p)=\Delta'+\lambda\Sigma$ with 
$\Delta'\wedge\Sigma=0$. Moreover, $\lambda\leq1/m$, where $m\gg0$ by our choice of $\Delta_p$ 
with $0\leq{\rm mult}_{W}\Delta_p\ll1$ for $W$ any irreducible component of ${\rm Supp}(\Delta_p)$.
Also, ${\rm mult}_C\Sigma=1$ since $\Sigma$ is smooth along $C$ as $f(C)$ passes through a general 
point of $B$ and $Y$ is smooth in codimension one. 

Choose a general point $b'\in f(C)$, we have that $Y_{b'}$ is a general fiber of $f:Y\rightarrow B$ and  
    \begin{align*} \frac{2}{\frac{\omega}{2}-2}\geq\frac{2}{\omega}(-K_X.V_t)=\pi^*(\Delta_p).Y_{b'}=(\Delta'+\lambda\Sigma).Y_{bʼ}>\frac{\epsilon}{2}-\frac{1}{m},
    \end{align*}   
where the first inequality follows from Corollary \ref{min}. The second inequality follows from 
$\Sigma.Y_{bʼ}\geq0$ and ${\rm mult}_C\Delta'={\rm mult}_C(\pi^*(\Delta_p))-\lambda{\rm mult}_C\Sigma$. 
Since $m\gg0$, we get $\omega\leq8/\epsilon+4$. 
\end{proof}

\begin{remark} In the proof of Proposition \ref{one}, the difficulty arises because 
in general the one cycle $C$ might be contained in ${\rm Supp}(\pi^{-1}_*(\Delta_p))$. 
In this case, one can not see the contribution of the $\epsilon$-klt condition from the 
intersection number $\pi^*\Delta_p.Y_b$ for $Y_b$ a general fiber over $f(C)\subseteq B$ 
as $Y_b\subseteq{\rm Supp}(\pi^{-1}_*(\Delta_p))$, cf., Example 
\ref{guiding}. The differentiation argument eliminates the contribution of irreducible 
components of ${\rm Supp}(\pi^{-1}_*(\Delta_p))$ along $Y_b$.  
\end{remark}

\subsection{Big multiplicity}\label{IV.2}
Again, let $X$ be an $\epsilon$-klt $\QQ$-factorial $\QQ$-Fano threefold of Picard 
number one. Write ${\rm Vol}(-K_X)=-K_X^3>(3\omega)^3$ for some positive rational 
number $\omega$ and denote \mbox{$D=-2K_X$}. As before, by Section \ref{const}, there is an 
integer $m>0$ such that $L=mD/\omega$ is Cartier and an open affine subset $U\subseteq X$ 
such that $H^0(X\times U,{\rm pr}_X^*L\otimes\mathcal{I}_\mathcal{Z}^{\otimes 6m})\neq0$. 
We fix a nonzero section $s\in H^0(X\times U,{\rm pr}_X^*L\otimes\mathcal{I}_\mathcal{Z}^{\otimes 6m})$ 
with $F={\rm div}(s)$ on $X\times U$. We now consider the case where there exists 
an irreducible component $\mathcal{W}$ of ${\rm Supp}(F)$ passing through $\mathcal{Z}$ 
with multiplicity ${\rm mult}_{\mathcal{W}}(F)>3m$.

\begin{lemma}\label{big1} If there exists an irreducible component 
$\mathcal{W}$ of ${\rm Supp}(F)$ passing through $\mathcal{Z}$ with multiplicity 
${\rm mult}_{\mathcal{W}}(F)>3m$, then there exists a covering family of tigers 
of dimension two and weight $\omega'\geq\omega/2$. 
\end{lemma}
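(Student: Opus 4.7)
My plan is to extract a covering family of tigers directly from the large-multiplicity component $\mathcal{W}\subseteq X\times U$, viewing its fibers over $U$ as a family of two-dimensional pure non-klt centers, then invoke Corollary~\ref{gc} to assemble this data into the required covering family.

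Since $\mathcal{W}$ is an irreducible component of $\operatorname{Supp}(F)$ containing $\mathcal{Z}=\overline{\Delta_U}$, both projections $\mathcal{W}\to X$ and $\mathcal{W}\to U$ are dominant; as $\dim\mathcal{W}=5$ inside the six-dimensional $X\times U$, for general $p\in U$ the fiber $W_p:=\mathcal{W}\cap(X\times\{p\})$ has dimension $2$. The canonical section $U\to\mathcal{W}$, $p\mapsto(p,p)$, singles out an irreducible component of this fiber which is a prime divisor on $X$ passing through $p$; we continue to denote it $W_p$. Lemma~\ref{fiber} now gives
\[
\operatorname{mult}_{W_p}(F_p)=\operatorname{mult}_{\mathcal{W}}(F)>3m,
\]
so the coefficient of $W_p$ in $\Delta'_p=F_p/m\sim_{\QQ}-K_X/(\omega/2)$ is the fixed constant $a:=\operatorname{mult}_{\mathcal{W}}(F)/m>3$.

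Rescale: set
\[
\Delta_p:=\tfrac{1}{a}\Delta'_p\sim_{\QQ}\frac{-K_X}{a\omega/2},
\]
so $\Delta_p$ has weight $\omega_p:=a\omega/2>3\omega/2$ with respect to $-K_X$, while the coefficient of $W_p$ in $\Delta_p$ is exactly $1$. By choosing $\mathcal{W}$ to have maximal multiplicity among the components of $F$ through $\mathcal{Z}$, and (if needed) applying the differentiation trick of Proposition~\ref{diff} to eliminate the contribution of any other component of the same multiplicity, we may arrange that $W_p$ is the unique prime divisor appearing in $\Delta_p$ with coefficient $\geq 1$. In particular, $W_p$ is a pure non-klt center of $(X,\Delta_p)$.

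The set $P\subseteq U$ of points for which the above construction applies is countably dense in $X$. Applying Corollary~\ref{gc} to the family $\{(\Delta_p,W_p)\}_{p\in P}$ with $D=-K_X$ and target weight $\omega'=\omega/2$, the hypotheses are satisfied since each $W_p$ is a pure non-klt center and $\omega_p>\omega'=\omega/2$. The corollary produces a covering family of tigers of weight $\omega/2$ together with a countably dense $Q\subseteq P$ such that $W_q$ is a fiber for each $q\in Q$; since $\dim W_q=2$, the resulting family has dimension two, yielding the claim. The main subtlety is promoting $W_p$ from \emph{pure} to \emph{minimal pure} non-klt center (condition $(3)$ of the definition): smaller non-klt subvarieties of $W_p$ could arise from intersections with the residual divisor $\Delta'_p-aW_p$ or from mildly singular points of $X$, but these are broken by the exceptional-non-klt-center tie-breaking of Lemma~\ref{exc} which is built into the proof of Corollary~\ref{gc}, at the cost of passing from $\omega_p$ to the strictly smaller target $\omega/2$, still satisfying $\omega'\geq\omega/2$.
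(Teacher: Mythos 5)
Your proposal is morally the same as the paper's, but routed differently. The paper extracts the covering family directly from $\mathcal{W}$: it cuts $U$ by hyperplanes so that $\mathcal{W}\to U$ factors through the Hilbert scheme, normalizes to obtain $\pi:Y\to X$, $f:Y\to B$, and then takes $\Delta_b=\pi_*(Y_b)$ (i.e., the \emph{reduced} prime divisor $W_p$ itself, with no residual) as the boundary. Since $\rho(X)=1$ and $X$ is $\QQ$-factorial, the integral prime divisor $\pi_*(Y_b)\leq F_b$ is automatically $\QQ$-linearly equivalent to a rational multiple of $-K_X$, and the high multiplicity gives $\omega'\geq\omega/2$; the non-klt center is $W_p$ itself, with nothing else present in the boundary that could compete for minimality. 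You, by contrast, keep the entire rescaled divisor $\Delta_p=\Delta'_p/a$ — residual components and all — and feed the pair $(\Delta_p,W_p)$ into Corollary~\ref{gc}, relying on the tie-breaking inside that corollary to promote $W_p$ from a pure to a minimal pure non-klt center. That does get you to the conclusion, but it is less self-contained (you are invoking Corollary~\ref{gc} as a black box where the paper constructs everything explicitly) and forces you to deal with the minimality question the paper simply avoids.

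One step of yours I would flag as shaky is the optional differentiation: you propose to apply Proposition~\ref{diff} against a \emph{second} component $\mathcal{W}'$ of maximal multiplicity to ``eliminate its contribution'' while keeping $\mathcal{W}$. Proposition~\ref{diff} only controls ${\rm mult}_{\mathcal{Z}}$ and removes the chosen component from the support; it gives you no control over the multiplicity of the remaining components such as $\mathcal{W}$, which may drop below $3m$ after differentiating. So this device does not reliably leave $W_p$ as the unique coefficient-$\geq 1$ component. Fortunately that step is inessential: since Corollary~\ref{gc} only needs $V_p=W_p$ to be a \emph{pure} non-klt center and $\omega_p>\omega/2$, you can drop the differentiation and still conclude, which is also why the paper never needs it — taking $\Delta_b=\pi_*(Y_b)$ makes the entire issue disappear.
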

\begin{proof} Fix $\mathcal{W}$ to be one of these irreducible components of ${\rm Supp}(F)$. We have 
the inclusions $\mathcal{Z}\subseteq\mathcal{W}\subseteq X\times U$ with the projection map $\mathcal{W}\rightarrow U$.
Cutting down by hyperplanes on $U$ and restricting to a smaller open subset of $U$, 
we may assume that $\mathcal{W}\rightarrow U$ factors through a Hilbert 
scheme of $X$ and $\mathcal{W}\rightarrow X$ is generically finite. Replace $U$ 
by the normalization of the closure of its image in the Hilbert scheme and 
$\mathcal{W}$ by the normalization of universal family. We obtain maps $\pi:Y\rightarrow X$ 
and $f:Y\rightarrow B$. Note that a general fiber $Y_b$ is two dimensional. 
We claim that the pairs $(\Delta_b=\pi_*(Y_b),V_b=Y_b)$ is a two dimensional 
covering of tigers of weight $\omega'\geq\omega/2$.

Since $X$ is $\QQ$-factorial and $\rho(X)=1$, the integral divisor $\Delta_b=\pi_*(Y_b)$ 
for any $p\in B$ on $X$ is $\QQ$-linear equivalent to a multiple of $-K_X$. 
Since $\mathcal{W}\leq F$, we have $\pi_*(Y_b)\leq F_b$ for general $b\in B$. 
In particular, $\pi_*(Y_b)\sim_\QQ-K_X/\omega'$ for some 
$\omega'\geq\omega/2$. Since any two general divisors $\pi_*(Y_{b_i})$, $i=1,2$, 
on $X$ are $\QQ$-linear equivalent as the base field is uncountable, and it is 
clear that $V_t=\pi(Y_b)$ is the minimal non-klt center of ${\rm Nklt}(X,\Delta_b)$, 
and the lemma follows.
\end{proof}

Let $\pi:Y\rightarrow X$ with $f:Y\rightarrow B$ be a covering family of tigers 
of dimension two and weight $\omega'\geq\omega/2$ given by Lemma \ref{big1}. We first 
deal with case where $\pi:Y\rightarrow X$ is not birational. 

\begin{proposition} Suppose that the two dimensional covering family of tigers 
$\pi:Y\rightarrow X$ with $f:Y\rightarrow B$ of weight $\omega'\geq\omega/2$ 
is not birational and assume that $\omega>12$, then $\omega\leq24/\epsilon+12$. 
In particular, there is an upper bound of volume
    \begin{align*} {\rm Vol}(-K_X)=-K_X^3\leq (\frac{72}{\epsilon}+36)^3.    
    \end{align*} 
\end{proposition}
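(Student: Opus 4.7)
The plan is to use the non-birationality of $\pi$ to reduce to a one-dimensional covering family of tigers, and then to imitate the proof of Proposition \ref{one}.

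First I would observe that since $\pi:Y\rightarrow X$ is generically finite of degree $d\geq 2$, through a very general point $p\in X$ pass at least two distinct fibers $Y_{b_1},Y_{b_2}$ of $f:Y\rightarrow B$; their images $V_{b_i}=\pi(Y_{b_i})$ are two distinct $2$-dimensional pure non-klt centers of $(X,\Delta_{b_1}+\Delta_{b_2})$, each appearing with coefficient one in $\Delta_{b_i}\sim_\QQ -K_X/\omega'$ with $\omega'\geq\omega/2$.

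Next I would construct a one-dimensional covering family of non-klt centers. At a very general smooth point $p\in V_{b_1}\cap V_{b_2}$, the two divisors meet transversally, and the standard snc computation shows that an irreducible component $C_p\subseteq V_{b_1}\cap V_{b_2}$ through $p$ is a pure non-klt center of the log canonical pair $(X,V_{b_1}+V_{b_2})$ (blowing up $C_p$ produces an exceptional divisor of discrepancy exactly $-1$). As $p$ ranges over a countably dense subset of $X$, these centers cover $X$. Applying Lemma \ref{exc} to promote $C_p$ to an exceptional non-klt center and then Corollary \ref{gc} assembles the data into a covering family of tigers $\pi':Y'\rightarrow X$, $f':Y'\rightarrow B'$ of dimension one and weight $\omega''\geq\omega/c$ for an absolute constant $c$, the boundary of a member being of the form $V_{b_1}+V_{b_2}\sim_\QQ -2K_X/\omega'$.

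Finally I would run the argument of Proposition \ref{one} on this new family. The hypothesis $\omega>12$ ensures $\omega''>2$, so Lemma \ref{dim}, Corollary \ref{min}, and the $\epsilon$-klt estimate all apply. For two very general points $p,q\in X$, the Connectedness Lemma \ref{con} forces a one-cycle on $Y'$ connecting the general fibers over $p$ and $q$, and an irreducible component $C$ of this cycle meeting $Y'_{\rm sm}$ satisfies ${\rm mult}_C((\pi')^*\Delta'_p)>\epsilon/2$ by symmetry combined with the $\epsilon$-klt condition on $X$. Applying Proposition \ref{diff} to differentiate away the horizontal components of $(\pi')^{-1}_*\Delta'_p$ and computing on a general fiber of $f'$ using Corollary \ref{min} yields an inequality of the form $c_1/(\omega''-c_2)>\epsilon/2$, which after tracking the constants gives the claimed bound $\omega\leq 24/\epsilon+12$.

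The main obstacle is twofold. First, verifying rigorously that $C_p$ is a pure non-klt center (not merely contained in the non-klt locus) and controlling how the weight $\omega''$ deteriorates when the surfaces $V_{b_i}$ happen to be singular along $C_p$, so that the snc computation must be done on a suitable log resolution. Second, as in Section \ref{IV.1}, the $\epsilon$-klt contribution to ${\rm mult}_C((\pi')^*\Delta'_p)$ can be obscured when $C$ lies inside an irreducible component of $(\pi')^{-1}_*\Delta'_p$; one must therefore adapt the differentiation construction of Section \ref{const} to the new family, producing a section whose associated divisor does not contain a general fiber of $f'$ through $C$ in its strict transform.
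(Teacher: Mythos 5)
Your approach diverges genuinely from the paper's, and the divergence creates a gap at precisely the place where the paper's argument has its key trick.

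The paper does not form the intersection curve $V_{b_1}\cap V_{b_2}$. Instead it exploits the degree $d\geq 2$ by \emph{averaging}: through a general $p$ there are $d$ distinct fiber-images $V_{t_1},\dots,V_{t_d}$, each an integral divisor $\sim_\QQ -K_X/\omega'$, and the paper sets
\[
\Delta'_p=\frac{6}{d}\sum_{i=1}^d V_{t_i}\sim_\QQ\frac{-K_X}{d\omega'/6}.
\]
This $\Delta'_p$ has ${\rm mult}_p\Delta'_p\geq 6$ but multiplicity only $6/d\leq 3$ along each component through $p$. That puts the situation back into the \textbf{small multiplicity} case of Section~\ref{IV.1}, where the differentiation machinery of Proposition~\ref{diff} can be run verbatim: removing a component of multiplicity $\leq 3$ still leaves multiplicity $\geq 3$ at $p$, so $p$ remains a non-klt center of the differentiated divisor. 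The resulting one-dimensional family has weight $\omega''\geq d\omega'/6\geq\omega/6$, and the $\epsilon$-klt estimate plus Corollary~\ref{min} give $\omega''\leq 4/\epsilon+2$, hence $\omega\leq 24/\epsilon+12$.

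Your construction, taking the boundary $V_{b_1}+V_{b_2}$ and the curve $C_p\subseteq V_{b_1}\cap V_{b_2}$, runs into a multiplicity shortfall that you cannot differentiate your way out of. The boundary has multiplicity $2$ at $p$ and multiplicity $1$ along each of $V_{b_1},V_{b_2}$, so the $6$-versus-$3$ slack that makes the paper's differentiation work is absent. If you differentiate away the horizontal component(s) containing $C_p$ via Proposition~\ref{diff}, the multiplicity along $C_p$ (and at $p$) drops from $2$ to at most $1$, which is not enough to keep $C_p$ (needs $\geq 2$) or $p$ (needs $\geq 3$ on a threefold) in the non-klt locus. So there is no ``well-behaved'' family to feed into the argument of Proposition~\ref{one}, and the inequality $c_1/(\omega''-c_2)>\epsilon/2$ that you posit is never produced. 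Separately, but less seriously: the claim that $C_p$ is a pure non-klt center of $(X,V_{b_1}+V_{b_2})$ requires the $V_{b_i}$ to be smooth and transversal along $C_p$ and $X$ to be smooth there; since $C_p$ lies in $V_{b_1}\cap V_{b_2}$, its points are not general in $X$, and this needs justification (you flag this yourself, but it compounds the problem). The fix is to replace your intersection-curve construction by the paper's averaged divisor $\Delta'_p$, after which the small multiplicity argument applies directly.
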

\begin{proof} Let $d\geq2$ be the degree of $\pi:Y\rightarrow X$. Fix an open 
subset $U\subseteq X$ such that for a general point $p\in U$ there are $d$ divisors 
$\Delta_p^{t_i}$, for some $t_1,...,t_d\in B$, with $\pi(Y_{t_i})\subseteq{\rm Nklt}(X,\Delta_p^{t_i})$ 
the unique minimal non-klt center passing through $p$. Consider the collection 
of $\QQ$-divisors \mbox{$\{\Delta_p'=\frac{6}{d}\sum_{i=1}^d\Delta_p^{t_i}|p\in U\}$}, 
then ${\rm mult}_p\Delta'_p\geq 6$, ${\rm mult}_{W'}\Delta'_p=\frac{6}{d}\leq3$ for 
$W'\subseteq{\rm Supp}(\Delta_p')$ any irreducible component, and $\Delta_p'\sim_\QQ\frac{-K_X}{d\omega'/6}$. 

By the same construction as in Section \ref{const}, possibly after shrinking $U$ to a 
smaller open affine subset, there exists an integer $m>0$ such that 
$H^0(X\times U,{\rm pr}_X^*L\otimes\mathcal{I}_\mathcal{Z}^{\otimes 6m})\neq0$ 
where $L=6m(-K_X)/d\omega'$ is Cartier. Let $t\in H^0(X\times U,{\rm pr}_X^*L\otimes\mathcal{I}_\mathcal{Z}^{\otimes 6m})$ 
be a general nonzero section and $G={\rm div}(t)$ be the associated divisor on $X\times U$. 
Note that ${\rm mult}_{\mathcal{Z}}(G)\geq 6m$ and ${\rm mult}_{\mathcal{W}}(G)\leq6m/d\leq3m$ 
for any irreducible component $\mathcal{W}$ of ${\rm Supp}(G)$ passing through $\mathcal{Z}$. 
Indeed, we know that for general $p\in U$ there is the divisor $\Delta_p'$ with ${\rm mult}_p\Delta'_p\geq 6$ 
and ${\rm mult}_{W'}\Delta'_p=\frac{6}{d}\leq3$ for any irreducible component $W'\subseteq{\rm Supp}(\Delta_p')$. 
Since $t$ is a general section, $t_p=t|_{X\times\{p\}}$ is also a general section for general $p\in U$. 
Using Lemma \ref{fiber} to compute the multiplicity, we obtain 
${\rm mult}_{\mathcal{W}}(G)={\rm mult}_{\mathcal{W}_p}(G_p)\leq m\cdot{\rm mult}_{W'}\Delta'_p\leq3m$, 
where $G_p={\rm div}(t_p)$ and ${\mathcal{W}_p}$ is any irreducible component of ${\rm Supp}(G_p)$. 

By a differentiation argument and the same construction as in Proposition \ref{one}, 
there is a covering family of tigers $(\Delta_t,V_t)$ of dimension one and weight 
$\omega''\geq d\omega'/6\geq d\omega/12$, which satisfies the property that the base 
locus ${\rm Bs}(|{\rm pr}_X^*L\otimes\mathcal{I}_\mathcal{Z}^{\otimes 6m-M}|)$ 
contains no codimension one components in a neighborhood of $\mathcal{Z}$, where 
$M$ is the maximum of ${\rm mult}_{\mathcal{W}}(G)$ amongst all the irreducible 
components $\mathcal{W}$ of ${\rm Supp}(G)$ passing through $\mathcal{Z}$. 
Hence by Corollary \ref{min}, we get 
    \begin{align*} \frac{2}{\omega''-2}\geq\frac{1}{\omega''}(-K_X.V_t)=\pi^*\Delta_p.Y_b\geq\frac{\epsilon}{2}.
    \end{align*}   
In particular, 
    \begin{align*} \frac{4}{\epsilon}+2\geq\omega''\geq\frac{d\omega}{12}\geq\frac{\omega}{6},
    \end{align*}
and $\omega\leq24/\epsilon+12$.
\end{proof}

\theoremstyle{ass1}
\newtheorem*{ass1}{Assumption}
\begin{ass1} From now on, we assume that $\pi:Y\rightarrow X$ with $f:Y\rightarrow B$ 
is a \textbf{birational} covering family of tigers of dimension two and weight $\omega'\geq\omega/2$. 
Write $K_Y+\Gamma-R=\pi^*K_X$ where $\Gamma$ and $R$ are effective divisors on $Y$ 
with no common components.
\end{ass1}

\begin{lemma}\label{Exc} There is a $\pi$-exceptional divisor $E$ on $Y$ 
dominating $B$. In particular, $\pi:Y\rightarrow X$ is not small.
\end{lemma}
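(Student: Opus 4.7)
The plan is to argue by contradiction: suppose no $\pi$-exceptional divisor on $Y$ dominates $B$. This subsumes the case that $\pi$ is small, so a contradiction also yields the ``in particular'' clause. Because $\pi:Y\to X$ is a proper birational morphism of normal threefolds, there are only finitely many $\pi$-exceptional prime divisors on $Y$; under our hypothesis each of them maps via $f$ to a proper closed subset of the irreducible curve $B$, hence to finitely many points. So the $\pi$-exceptional locus of $Y$ lies in $f^{-1}(T)$ for some finite $T\subsetneq B$, and its image $\Sigma:=\pi(\mathrm{Ex}(\pi))\subseteq X$ is a closed subset of dimension at most one; in particular the isomorphism locus of $\pi$ contains $X\setminus\Sigma$.

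Next I would exploit $\rho(X)=1$. Pick two general points $t_1\neq t_2$ in $B\setminus T$; the fibers $Y_{t_1},Y_{t_2}$ are disjoint, and $V_{t_i}:=\pi(Y_{t_i})$ are prime divisors on $X$. These are distinct: otherwise, for a general $x\in V_{t_1}\setminus\Sigma$ the finite birational maps $\pi|_{Y_{t_i}}:Y_{t_i}\to V_{t_i}$ each contribute a point to $\pi^{-1}(x)$, and these are distinct because $Y_{t_1}\cap Y_{t_2}=\emptyset$, contradicting $|\pi^{-1}(x)|=1$ on the isomorphism locus. Since $\rho(X)=1$ and each $V_{t_i}$ is an effective prime divisor proportional to the ample generator $-K_X$, both $V_{t_i}$ are ample, so $V_{t_1}\cdot V_{t_2}$ is a nonzero effective $1$-cycle of positive degree. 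The same two-points-upstairs argument shows every point of $V_{t_1}\cap V_{t_2}$ lies in $\Sigma$, so $V_{t_1}\cap V_{t_2}\subseteq\Sigma$ as a set.

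The crucial rigidity step is next. Fix $t'\in B$ general and let $t$ vary. The support of $V_t\cdot V_{t'}$ is a Zariski-closed, pure $1$-dimensional subset of the fixed set $\Sigma\cap V_{t'}$, which has only finitely many irreducible components. Since closed $1$-dimensional subsets of $\Sigma\cap V_{t'}$ form a finite discrete set and $B$ is irreducible, this combinatorial datum must be constant on a dense open $B^\circ\subseteq B$. Denote its common value by $C_0$; then $C_0$ is a fixed reduced curve in $X$ with $C_0\subseteq V_t$ for every $t\in B^\circ$.

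To close the argument, I pull $C_0$ back through the family. Let $W\subseteq Y$ be the Zariski closure of $\bigcup_{t\in B^\circ}(\pi|_{Y_t})^{-1}(C_0)$. For each $t\in B^\circ$ the map $\pi|_{Y_t}:Y_t\to V_t$ is finite birational and $C_0\subseteq V_t$ is a curve, so $(\pi|_{Y_t})^{-1}(C_0)$ is $1$-dimensional in $Y_t$; varying $t$ in the $1$-dimensional $B^\circ$ then forces $W$ to have dimension $2$ inside the threefold $Y$. By construction $f(W)=B$ while $\pi(W)\subseteq C_0$ has dimension one, so any irreducible component of $W$ dominating $B$ is a prime divisor on $Y$ that is $\pi$-exceptional and dominates $B$, contradicting the standing assumption. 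The main obstacle is the rigidity step, where one must upgrade the combinatorial finiteness of $1$-dimensional supports inside the fixed curve $\Sigma\cap V_{t'}$ into genuine constancy of $C_0$ across the $1$-parameter family; the remaining steps are formal consequences of $\rho(X)=1$, the disjointness of the fibers of $f$, and the birationality of $\pi$.
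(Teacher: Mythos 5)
Your proof is correct in substance but takes a more geometric and considerably longer route than the paper's. The paper's argument is a short numerical contradiction: pull back an ample $A_B$ from $B$ to $A_Y=f^*A_B$, push forward to $X$ where $\rho(X)=1$ forces $A_X:=\pi_*A_Y$ to be ample, and write $\pi^*A_X=A_Y+G$ with $G\geq0$ and $\pi$-exceptional by the negativity lemma; if no $\pi$-exceptional divisor dominates $B$, then $G$ lies over finitely many points of the curve $B$, so $\pi^*A_X\leq f^*H$ for some divisor $H$ on $B$, contradicting that $\pi^*A_X$ is big. You instead use $\rho(X)=1$ to make each $V_t=\pi(Y_t)$ an ample prime divisor, observe that $V_t\cap V_{t'}$ is a nonempty one-dimensional set trapped inside $\Sigma=\pi(\mathrm{Ex}(\pi))$, extract a common curve $C_0\subseteq V_t$ by a rigidity argument, and pull $C_0$ back to produce the exceptional divisor. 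Your route reveals an interesting geometric fact (under the contradiction hypothesis all the non-klt centers $V_t$ would share a common curve, which a posteriori is $\pi(E)$), but two steps should be tightened: the rigidity step is best phrased by noting that for each curve component $C_i$ of $\Sigma$ the set $B_i=\{t\in B:C_i\subseteq V_t\}$ is closed, that the $B_i$ jointly cover $B$ because $V_t\cap V_{t'}$ always contains some $C_i$, and that irreducibility of $B$ forces some $B_i=B$; and the dimension count for your $W$ is cleanest for $W':=\pi^{-1}(C_0)$, a proper closed subset of $Y$ whose general fiber over $B$ is one-dimensional, so some component of $W'$ dominating $B$ is a two-dimensional prime divisor with image inside $C_0$, hence $\pi$-exceptional. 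Both approaches hinge on $\rho(X)=1$ and on $B$ being a curve; the paper's bigness argument is much more economical, while yours is valid once these details are filled in.
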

\begin{proof} Suppose that there is no $\pi$-exceptional divisors dominating $B$. 
Let $A_B$ be a sufficiently ample divisor on $B$ and $A_Y=f^*A_B$ the pull-back. 
Since $\rho(X)=1$, the divisor $A_X=\pi_*A_Y$ on $X$ is ample and $\pi^*A_X=A_Y+G$ 
for some effective $\pi$-exceptional divisor $G$. By assumption $f(G)\subseteq B$ 
has codimension one and hence $A_Y+G\leq f^*H$ for some divisor $H$ on $B$. This 
is a contradiction since then $A_Y+G$ is not big but $\pi^*A_X$ is.
\end{proof}

The following lemma is crucial for computing the restricted volume. The key point is 
that it allows us to control the negative part of the subadjunction $-K_X|_{V_t}$. 
Note that the proof fails in higher dimensions, cf. \cite[Lemma 6.2]{Mck:fano}.
\begin{lemma}\label{two} Let $E$ be a $\pi$-exceptional divisor dominating $B$. 
For general points $p,q\in X$ we have that $E\subseteq{\rm Nklt}(K_Y+\Gamma-R+\pi^*(\Delta_p+\Delta_q)).$ 
In particular, denote $H=\pi^*(-K_X)$. For any $\pi$-exceptional divisor $E$ 
dominating $B$ we have  
    \begin{align*} \frac{2}{\omega'}H\sim_\QQ\pi^*(\Delta_p+\Delta_q)\geq\epsilon E.    
    \end{align*}
\end{lemma}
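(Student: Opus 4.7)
My plan is to apply the Connectedness Lemma \ref{con} to the morphism $\pi: Y \to X$ with boundary $B_Y := \Gamma - R + \pi^*(\Delta_p + \Delta_q)$. Since $K_Y + B_Y = \pi^*(K_X + \Delta_p + \Delta_q)$ and $\Delta_t \sim_\QQ -K_X/\omega'$ by the tiger construction, $-(K_Y + B_Y) \sim_\QQ (1 - 2/\omega')H$ is nef and big provided $\omega' > 2$, which we may assume since we are bounding $\omega$ from above and $\omega' \geq \omega/2$. The negative components of $B_Y$ are pieces of $R$, hence $\pi$-exceptional, so their images in $X$ have codimension $\geq 2$; Lemma \ref{con} then gives that ${\rm Nklt}(Y, B_Y) \cap \pi^{-1}(x)$ is connected for every $x \in X$.

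First I would check $V_p \cup V_q \subseteq {\rm Nklt}(Y, B_Y)$: because $\pi|_{V_t}$ is birational, $V_t$ is the strict transform of $\Delta_t = \pi(V_t)$, so $\pi^*\Delta_t = V_t + (\text{effective exceptional})$ and ${\rm mult}_{V_p}(B_Y), {\rm mult}_{V_q}(B_Y) \geq 1$. For general $p, q$, the fibers $V_p, V_q$ of $f$ are disjoint in $Y$, while $\pi(V_p), \pi(V_q) \subset X$ are divisors meeting along curves; every $x \in \pi(V_p) \cap \pi(V_q)$ must lie in $\pi({\rm Exc}(\pi))$, else $\pi^{-1}(x)$ would be a singleton in the disjoint union $V_p \sqcup V_q$. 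Now pick an irreducible $\pi$-exceptional divisor $E$ dominating $B$: dominance makes $E \cap V_p$ a curve whose image is one-dimensional inside the irreducible curve $\pi(E)$, so $\pi(E) = \pi(E \cap V_p) \subseteq \pi(V_p)$, and likewise $\pi(E) \subseteq \pi(V_q)$. Hence $\pi(E)$ lies in the base locus $\bigcap_t \pi(V_t)$ of the family.

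For the crucial step, fix a general $x \in \pi(E)$. Since $x \in \bigcap_t \pi(V_t)$, the description $Y \subseteq X \times B$ from Lemma \ref{tiger} identifies $\pi^{-1}(x)$, up to normalization, with $\{x\} \times B$: a connected one-dimensional scheme, which contains the two nonempty disjoint closed subsets $V_p \cap \pi^{-1}(x)$ and $V_q \cap \pi^{-1}(x)$ inside the non-klt locus. Since a finite Zariski-connected set is a single point, the connected non-klt set in $\pi^{-1}(x)$ must sweep out a chain of components joining $V_p$ to $V_q$. Because $V_p \cap E$ and $V_q \cap E$ both surject onto $\pi(E)$, the (essentially unique) preimages of $x$ in $V_p$ and in $V_q$ already lie in $E$, placing $E \cap \pi^{-1}(x)$ squarely on this non-klt chain; so $E \cap \pi^{-1}(x) \subseteq {\rm Nklt}(Y, B_Y)$. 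Varying $x$ over a dense open subset of $\pi(E)$ and using closedness of the non-klt locus then yields $E \subseteq {\rm Nklt}(Y, B_Y)$.

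Finally, $E \subseteq {\rm Nklt}(Y, B_Y)$ is equivalent to ${\rm mult}_E(B_Y) \geq 1$, and the $\epsilon$-klt hypothesis on $X$ bounds ${\rm mult}_E(\Gamma - R) = -a(E, X, 0) < 1 - \epsilon$; combining, ${\rm mult}_E(\pi^*(\Delta_p + \Delta_q)) \geq \epsilon$, i.e.\ $\pi^*(\Delta_p + \Delta_q) \geq \epsilon E$, and $\pi^*(\Delta_p + \Delta_q) \sim_\QQ 2H/\omega'$ is immediate from $\Delta_t \sim_\QQ -K_X/\omega'$. I expect the main obstacle to be the third paragraph: rigorously justifying that the non-klt chain in $\pi^{-1}(x)$ contains $E \cap \pi^{-1}(x)$ rather than merely some parallel component, which requires leaning on the explicit product structure of $Y \to B$ and the fact that $E$ dominates $B$.
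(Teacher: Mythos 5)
Your overall strategy matches the paper's: apply the Connectedness Lemma to the boundary $\Gamma-R+\pi^*(\Delta_p+\Delta_q)$ over $\pi:Y\to X$, use the finiteness of $\pi|_{V_t}$ to produce two disjoint finite sets $V_{t_1}\cap\pi^{-1}(x)$ and $V_{t_2}\cap\pi^{-1}(x)$ inside a connected non-klt locus, and deduce a positive-dimensional non-klt piece inside the fiber $\pi^{-1}(x)$. Your preliminary observations ($\pi(E)$ is an irreducible curve, $\pi(E)\subseteq\pi(V_p)\cap\pi(V_q)$, $E\cap V_p\to\pi(E)$ is dominant) and your closing $\epsilon$-klt discrepancy computation ${\rm mult}_E(\Gamma-R)=-a(E,X,0)<1-\epsilon$, hence ${\rm mult}_E(\pi^*(\Delta_p+\Delta_q))\geq\epsilon$, are both correct and in fact slightly cleaner than the paper's two-case split.

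However, you correctly anticipate the gap. The sentence claiming that the endpoints $x_1,x_2\in E$ lying on the non-klt chain ``places $E\cap\pi^{-1}(x)$ squarely on this non-klt chain'' is a non sequitur: a connected one-dimensional subset of $\pi^{-1}(x)$ containing two points of $E\cap\pi^{-1}(x)$ need not contain $E\cap\pi^{-1}(x)$, nor even meet $E$ outside those two points; it could run through a different $\pi$-exceptional divisor lying over $\pi(E)$. The paper's route at this juncture is more careful: it isolates the \emph{component} of the chain through $x_1$, argues that this component cannot lie in $V_{t_1}$ (since $\pi|_{V_{t_1}}$ is finite) and is therefore $\pi$-exceptional and contracted to $x$, and then lets $x$ sweep over the curve $\pi(E)$ so that these contracted curves trace out a two-dimensional $\pi$-exceptional subset of the non-klt locus, which is then identified with $E$ using irreducibility. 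To close the gap you should replace the ``endpoints are in $E$, hence the whole of $E\cap\pi^{-1}(x)$ is in the chain'' step with this component-tracking-plus-sweep argument, choosing $x_1\in E\cap V_{t_1}\cap\pi^{-1}(x)$ (possible since $E\cap V_{t_1}\to\pi(E)$ is dominant) so that the swept-out surface passes through $E\cap V_{t_1}$ and can be matched to $E$.
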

\begin{proof} Since the construction of covering families of tigers is done via 
the Hilbert scheme, $\pi$ is finite on the general fibers $V_t$ of 
$f:Y\rightarrow B$. Recall that $\pi(V_t)\subseteq X$ is the minimal non-klt center 
of $(X,\Delta_{p(t)})$ for some $\Delta_{p(t)}$ passing through a general 
point $p(t)\in X$. We denote $\Delta_{p(t)}$ by $\Delta_t$ for simplicity.

Let $E$ be a $\pi$-exceptional divisor dominating $B$. Since $E\cap V_b$ is one dimensional 
for general $b\in B$ and $\pi|_{V_b}$ is finite, $\dim\pi(E)>0$ as $\pi(E)\supseteq\pi(E\cap V_b)$. 
Since $E$ is irreducible and $\pi$-exceptional, $\pi(E)$ is an irreducible curve. 
Fix $t_1,t_2\in B$ two general points. Pick a general point $x\in \pi(E)$ and 
consider its preimage on $V_{t_i}$. Since $\pi$ is finite on the general fiber 
$V_t$, $\pi^{-1}(x)\cap V_{t_i}$ can only be a discrete finite set. Choose 
$x_i\in \pi^{-1}(x)\cap V_{t_i}$ over $x$ for $i=1,2$. 
Apply the Connectedness Lemma \ref{con} to the pair $(Y,\Gamma-R+\pi^*(\Delta_{t_1}+\Delta_{t_2}))$ 
over $X$. There is a (possibly reducible) curve contained in 
$\pi^{-1}(x)\cap{\rm Nklt}(Y,\Gamma-R+\pi^*(\Delta_{t_1}+\Delta_{t_2}))$ 
connecting $x_1$ and $x_2$. The component of this curve containing $x_1$ 
can not lie on $V_{t_1}$ as the map $\pi$ is finite on $V_{t_1}$. As $x\in\pi(E)$ 
is general, this curve deforms into a dimension two subset of $E$ by moving 
$x\in \pi(E)$. Since $E$ is irreducible, the closure of this two dimensional 
subset coincides with $E$ and hence $E\subseteq{\rm Nklt}(K_Y+\Gamma-R+\pi^*(\Delta_{t_1}+\Delta_{t_2}))$. 
In particular, ${\rm mult}_E(K_Y+\Gamma-R+\pi^*(\Delta_{t_1}+\Delta_{t_2}))\geq1$. 
If $E\nsubseteq{\rm Supp}(\Gamma)$, then $\pi^*(\Delta_p+\Delta_q)\geq E$. 
If $E\subseteq{\rm Supp}(\Gamma)$, then $\pi^*(\Delta_p+\Delta_q)\geq\epsilon E$ 
since $\Gamma\in[0,1-\epsilon)$ as $X$ is $\epsilon$-klt.  
\end{proof}

To study the geometry of the covering family $f:Y\rightarrow B$, we would like to 
run a relative minimal model program of $(Y,\Gamma)$ over $B$. However, 
$Y$ is normal but possibly not $\QQ$-factorial. To get a $\QQ$-factorial model 
of $(Y,\Gamma)$, we adopt Hacon's dlt models, cf. \cite[Theorem 3.1]{KK:DB}. In fact, 
since the volume bound will be obtained by doing a computation on a general fiber 
$Y_b$, it suffices to modify $Y$ over an open subset $U\subseteq B$. 

\begin{lemma}\label{Tech} After restricting to an open subset $U\subseteq B$ and 
replacing $Y$ by a suitable birational model, we can assume that $Y$ is $\QQ$-factorial 
and $(Y,\Gamma)$ is $\epsilon/2$-klt. Moreover, we can assume for $E$ any 
$\pi$-exceptional divisor dominating $U$ and $p,q\in X$ general, we have that 
    \begin{equation}\label{40} \frac{2}{\omega'}H\sim_\QQ\pi^*(\Delta_p+\Delta_q)\geq\frac{\epsilon}{2}E.    
    \end{equation}
\end{lemma}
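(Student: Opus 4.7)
The plan is to produce a $\QQ$-factorial birational model via Hacon's $\QQ$-factorial dlt modification (the paper's cited \cite[Theorem 3.1]{KK:DB}) applied to $(Y,\Gamma)$, and then replay the argument of Lemma \ref{two} on this new model. The weakening of the constant from $\epsilon$ to $\epsilon/2$ is precisely what provides enough slack to absorb the additional exceptional divisors introduced by the modification.

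Concretely, I would invoke the dlt modification to obtain $g:Y'\to Y$ projective birational with $Y'$ $\QQ$-factorial. Writing $K_{Y'}+\Gamma'-R'=(\pi\circ g)^*K_X$ with $\Gamma',R'\geq 0$ having no common components, the $\epsilon$-klt hypothesis on $(X,0)$ forces the coefficient of each component of $\Gamma'$ to be strictly less than $1-\epsilon$, hence less than $1-\epsilon/2$. For any valuation $F$ over $Y'$, a common-resolution computation gives $a(F,Y',\Gamma')=a(F,X,0)-\mathrm{mult}_F(h^*R')$, so $(Y',\Gamma')$ fails to be $\epsilon/2$-klt precisely on those $F$ for which $\mathrm{mult}_F(h^*R')$ is too large; one designs the dlt modification to extract exactly these finitely many valuations (finiteness following from the boundedness of klt discrepancies), thereby making $(Y',\Gamma')$ globally $\epsilon/2$-klt. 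After replacing $Y$ by $Y'$ (and renaming $\pi\circ g$ as $\pi$, $f\circ g$ as $f$), we have the desired $\QQ$-factorial $\epsilon/2$-klt model.

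Next, I would pass to an open subset $U\subseteq B$ by generic flatness applied to the new family map $f:Y\to B$. Over $U$ the fibers of $Y$ are birational transforms of the fibers of the original family, and $\pi:Y\to X$ remains generically finite and dominant; so the covering family of tigers structure persists, with possibly some new $\pi$-exceptional divisors now dominating $U$.

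Finally, I would rerun the argument of Lemma \ref{two} verbatim on the new model. For general $p,q\in X$ and any $\pi$-exceptional divisor $E$ dominating $U$, the Connectedness Lemma applied to $(Y,\Gamma-R+\pi^*(\Delta_p+\Delta_q))$ over $X$ forces $E\subseteq\mathrm{Nklt}$, and hence $\mathrm{coeff}_E(\Gamma)+\mathrm{mult}_E(\pi^*(\Delta_p+\Delta_q))\geq 1$. Since $\mathrm{coeff}_E(\Gamma)<1-\epsilon/2$ by the $\epsilon/2$-klt property, we obtain $\pi^*(\Delta_p+\Delta_q)\geq(\epsilon/2)E$, which is \eqref{40}. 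The main obstacle is the first step: producing a $\QQ$-factorial model that is genuinely $\epsilon/2$-klt (not merely dlt or klt) requires careful invocation of MMP machinery to extract exactly the problematic valuations, and one must verify that this extraction is compatible with, and preserves, the family structure over a suitable open $U\subseteq B$.
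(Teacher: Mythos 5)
Your plan has the right strategic shape---produce a $\QQ$-factorial model with discrepancies controlled at level $\epsilon/2$ and then re-establish the divisorial inequality---and that is indeed what the paper does, but there are two concrete gaps in the execution.

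The first is the finiteness claim. You write that one ``designs the dlt modification to extract exactly these finitely many valuations (finiteness following from the boundedness of klt discrepancies).'' This is circular and in fact false as stated: the relevant pairs, namely $(Y,\Gamma)$ and, more importantly, $(Y,\Gamma-R_{\rm d}+\pi^*(\Delta_p+\Delta_q))$, need not be klt. The second pair is definitely not klt because $V_p\subseteq\mathrm{Nklt}(X,\Delta_p)$. Consequently the set of exceptional valuations with log discrepancy in $(-\infty,-1+\epsilon/2]$ is infinite, and there is no way to ``extract all of them.'' The paper circumvents this by first fixing a log resolution $\phi:W\to Y$ of the pair $(Y,\Gamma_{p,q})$ where $\Gamma_{p,q}=\Gamma-R_{\rm d}+\pi^*(\Delta_p+\Delta_q)$ --- so one only ever works with the finitely many $\phi$-exceptional divisors --- and then builds a boundary $\Xi$ on $W$ with all coefficients in $[0,1-\epsilon/2)$ and runs a relative MMP over $Y$ with scaling. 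The MMP, combined with the negativity lemma, contracts exactly those $\phi$-exceptional divisors with log discrepancy $>\epsilon/2$ with respect to $(Y,\Gamma_{p,q})$. Note also that the pair being resolved is $(Y,\Gamma_{p,q})$, not $(Y,\Gamma)$: the divisor $\pi^*(\Delta_p+\Delta_q)$ must be part of the input, since those are the non-klt places that need to be controlled for \eqref{40}.

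The second gap is the final step. You propose to ``rerun the argument of Lemma \ref{two} verbatim'' for every $\pi$-exceptional divisor dominating $U$ on the new model. That argument relies on $\pi$ being finite on the general fibers $V_t$ of $f$; on the modified model $W'$ the composed map $\pi'=\pi\circ\phi'$ restricted to a general fiber $V'_t$ is only generically finite (the MMP morphism $\phi':W'\to Y$ may contract curves inside a fiber), so the crucial step ``the connecting curve cannot lie in $V'_{t_1}$'' is not automatic. The paper avoids this entirely by splitting into cases: if $\phi'_*E'\neq 0$ it pulls back the non-klt inclusion from Lemma \ref{two} on $Y$, and if $\phi'_*E'=0$ it reads off $\mathrm{mult}_{E'}(\Gamma_{W'}-R_{W'}+\pi'^*(\Delta_p+\Delta_q))\geq 1-\epsilon/2$ directly from the MMP construction (this is where the $\epsilon/2$ rather than $\epsilon$ enters), then argues by whether $E'\subseteq\mathrm{Supp}(R_{W'})$ or $E'\subseteq\mathrm{Supp}(\Gamma_{W'})$. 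Your discrepancy formula $a(F,Y',\Gamma')=a(F,X,0)-\mathrm{mult}_F(h^*R')$ is correct and is the right intuition, but by itself it does not produce the finite, MMP-selected model the statement requires.
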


\begin{proof} Fix $p,q\in X$ general and consider the pair 
    \begin{equation} K_Y+\Gamma-R_{\rm d}+\pi^*(\Delta_p+\Delta_q)-R_{\rm e}\sim_\QQ \pi^*(K_X+\Delta_p+\Delta_q) \tag{$\sharp$}
    \end{equation}
where $R=R_{\rm d}+R_{\rm e}$ with $(-)_{\rm d}$ the sum of components dominating $B$ 
and $(-)_{\rm e}$ the sum of components mapping to points in $B$. Restricting $Y$ to 
$Y_U=f^{-1}(U)$ for a suitable nonempty open set $U\subseteq B$, we may assume that 
$R_{\rm e}=0$ and $(\sharp)$ becomes
    \begin{equation*} K_Y+\Gamma-R_{\rm d}+\pi^*(\Delta_p+\Delta_q)\sim_\QQ \pi^*(K_X+\Delta_p+\Delta_q). 
    \end{equation*}
We abuse the notation: $Y$ is understood to be $Y_U$ if not specified. 

Denote $\Gamma_{p,q}=\Gamma-R_{\rm d}+\pi^*(\Delta_p+\Delta_q)$. Note 
that $\Gamma_{p,q}\geq0$ by Lemma \ref{two}. Let $\phi:W\rightarrow Y$ 
be a log resolution of $(Y,\Gamma_{p,q})$ and write 
    \begin{align*}K_W+{\phi}^{-1}_*\Gamma_{p,q}+Q\sim_\QQ \phi^*(K_Y+\Gamma_{p,q})+P, 
    \end{align*}
where $Q,P\geq0$ are $\phi$-exceptional divisors with $Q\wedge P=0$. We 
aim to modify $W$ by running a relative minimal model program over $Y$ 
with scaling of an ample divisor so that it contracts $Q^{<1-\epsilon/2}+P$, 
where $(\sum_ia_iQ_i)^{<\alpha}:=\sum_{a_i<\alpha}a_iQ_i$. Note that we define 
$(-)^{\alpha\leq\cdot<\beta}$ and $(-)^{\geq\alpha}$ in the same way.  

Consider $F=\sum_i  F_i$, where the sum runs over all the $\phi$-exceptional 
divisors with log discrepancy in $(\epsilon/2,1]$ with respect to $(Y,\Gamma_{p,q})$, 
then 
    \begin{align*}(F+P)\wedge Q^{\geq1-\epsilon/2}=0,\ {\rm and}\ {\rm Supp}(F)\supseteq{\rm Supp}(Q^{<1-\epsilon/2}).
    \end{align*}
Since $(Y,\Gamma-R)$ is $\epsilon$-klt, the divisor $\Gamma$ on $Y$ as 
well as $\phi^{-1}_*\Gamma$ on $W$ has coefficients in $[0,1-\epsilon)$. 
For rational numbers $0<\epsilon<\epsilon'<1$ and $0<\delta,\delta'\ll1$, 
we have the following $\epsilon/2$-klt pair
    \begin{align*} &K_W+\phi^{-1}_*\Gamma+Q^{<1-\epsilon/2}+\delta'Q^{1-\epsilon/2\leq\cdot<1}+(1-\epsilon')(Q^{\geq1})_{\rm red}+\delta F \\
	\sim_\QQ\ &\phi^*(K_Y+\Gamma_{p,q})-(\phi^{-1}_*\Gamma_{p,q}-\phi^{-1}_*\Gamma)-(1-\delta')Q^{1-\epsilon/2\leq\cdot<1}-(Q^{\geq1}-(1-\epsilon')(Q^{\geq1})_{\rm red})\\
                 &+P+\delta F   
    \end{align*}
where $(\sum_jb_jG_j)_{\rm red}:=\sum_{b_j\neq0}G_j$. We denote the above pair by $(W,\Xi)$ 
where 
    \begin{align*}\Xi=\phi^{-1}_*\Gamma+Q^{<1-\epsilon/2}+\delta'Q^{1-\epsilon/2\leq\cdot<1}+(1-\epsilon')(Q^{\geq1})_{\rm red}+\delta F.
    \end{align*}

By \cite{BCHMI}, a relative minimal model program with scaling of an ample divisor 
of the pair $(W,\Xi)$ over $Y$ terminates with a birational model $\psi:W\dashrightarrow W'$ 
over $Y$ with $\phi':W'\rightarrow Y$ the induced map. We obtain the following diagram, 
\begin{center}
\begin{tikzpicture}
\node (A) at (-2,0) {$X$};
\node (B) at (0,0){ $Y$};
\node (C) at (2,0){$Y_U$};
\node (D) at (0,-1){$B$};
\node (E) at (2,-1){$U$};
\node (G) at (2,1){$W'$};
\node (H) at (4,1){$W$};
\path[->] (B) edge node[below]{$\pi$}(A);
\path[->] (B) edge node[right]{$f$}(D);
\path[left hook->] (C) edge node[left]{}(B);
\path[->] (C) edge node[left]{}(E);
\path[left hook->] (E) edge node[left]{}(D);
\path[->] (H) edge node[right]{$\phi$}(C);
\path[dashed][->] (H) edge node[above]{$\psi$}(G);
\path[->] (G) edge node[right]{$\phi'$}(C);
\path[->] (G) edge node[above]{$\pi'$}(A);
\end{tikzpicture}
\end{center}
where $\pi':W'\rightarrow X$ is the induced map.

Write $K_{W'}+\Gamma_{W'}-R_{W'}\sim_\QQ\pi'^*K_X$ where $\pi'=\phi'\circ\pi$. 
Note that $\Gamma_{W'}\in[0,1-\epsilon)$ by the $\epsilon$-klt condition and 
$\Gamma_{W'}-(\phi')^{-1}_*\Gamma\geq0$ is $\phi'$-exceptional. It follows by the construction 
that $\Gamma_{W'}\leq\psi_*\Xi$. 
In particular, $(W',\Gamma_{W'})$ is $\epsilon/2$-klt as the pair $(W,\Xi)$ is $\epsilon/2$-klt 
and the minimal model program does not make singularities worse.  

On $W'$, the divisor 
    \begin{align*} G=\psi_*(-(\phi^{-1}_*\Gamma_{p,q}-\phi^{-1}_*\Gamma)-(1-\delta')Q^{1-\epsilon/2\leq\cdot<1}-(Q^{\geq1}-(1-\epsilon')(Q^{\geq1})_{\rm red})
                 +P+\delta F)          
    \end{align*}
is $\phi'$-nef with $\phi'_*G\leq0$ since $\Gamma_{p,q}\geq\Gamma$. 
By \cite[Negativity Lemma 3.39]{KM:bgav}, we have that $G\leq0$. Since $F+P$ 
is \mbox{$\phi$-exceptional} and $(F+P)\wedge Q^{\geq1-\epsilon/2}=0$, 
it follows that $\psi_*(P+\delta F)=0$. In particular, all the $\phi'$-exceptional divisors 
on $W'$ have log discrepancies less than or equal to $\epsilon/2$ with respect 
to $(Y,\Gamma_{p,q})$. 

We now show that for any $\pi'$-exceptional divisor $E'$ on $W'$ dominating $U$, $E'$ satisfies 
the inequality 
    \begin{align*} \frac{2}{\omega'}H'\sim_\QQ\pi'^*(\Delta_p+\Delta_q)\geq\frac{\epsilon}{2}E',    
    \end{align*}
where $H'=\pi'^*(-K_X)$. This easy to see. If $E=\phi'_*(E')\neq0$ on $Y_U$, then 
by Lemma \ref{two}, $E\subseteq{\rm Nklt}(K_Y+\Gamma-R+\pi^*(\Delta_p+\Delta_q))$ and hence 
$E'\subseteq{\rm Nklt}(K_{W'}+\Gamma_{W'}-R_{W'}+\pi'^*(\Delta_p+\Delta_q))$. The inequality 
then follows from the same argument as in Lemma \ref{two}. If $\phi'_*E'=0$, then 
by construction ${\rm mult}_{E'}(K_{W'}+\Gamma_{W'}-R_{W'}+\pi'^*(\Delta_p+\Delta_q))\geq1-\epsilon/2$. 
Suppose that $E'\subseteq{\rm Supp}(R_{W'})$, then 
    \begin{align*} \frac{2}{\omega'}H'\sim_\QQ\pi'^*(\Delta_p+\Delta_q)\geq E'\geq\frac{\epsilon}{2}E'.    
    \end{align*}
If $E'\subseteq{\rm Supp}(\Gamma_{W'})$, then as $\Gamma_{W'}\in[0,1-\epsilon)$ we get 
    \begin{align*} \frac{2}{\omega'}H'\sim_\QQ\pi'^*(\Delta_p+\Delta_q)\geq((1-\frac{\epsilon}{2})-(1-\epsilon))E'=\frac{\epsilon}{2}E'.    
    \end{align*}

It follows that $W'$ satisfies the required properties. 
\end{proof}

\begin{remark} Write $\Gamma=\pi^{-1}_*\Delta+\Gamma_{\rm d}+\Gamma_{\rm e}$ and 
$R=R_{\rm d}+R_{\rm e}$, where $(-)_{\rm d}$ is the sum of components dominating $B$ 
and $(-)_{\rm e}$ is the sum of components mapping to points in $B$. From the proof of 
Lemma \ref{Tech}, we deduce the following two inequalities : 
    \begin{equation*}\label{41}  \frac{2}{\omega'}H\sim_\QQ \pi^*(\Delta_p+\Delta_q)\geq R_{\rm d}\ {\rm and}\
				\frac{2}{\omega'}H\sim_\QQ \pi^*(\Delta_p+\Delta_q)\geq\frac{\epsilon}{2}\Gamma_{\rm d}. \tag{5.2}
    \end{equation*}
\end{remark}

Now let $\pi:Y\rightarrow X$ with $f:Y\rightarrow U$ be the modified birational 
covering family of tigers of dimension two and weight $\omega'\geq\omega/2$ 
given by Lemma \ref{Tech}, where $Y$ is now $\QQ$-factorial. Write $K_Y+\Gamma-R\sim_\QQ \pi^*K_X$, 
where $\Gamma, R\geq0$ are $\pi$-exceptional and $\Gamma\wedge R=0$. 
The pair $(Y,\Gamma)$ is $\epsilon/2$-klt with $\Gamma\in[0,1-\epsilon/2)$ and 
note that $H=\pi^*(-K_X)$ is semi-ample and big on $Y$. 

Recall that for a projective morphism $\phi:Z\rightarrow U$, a divisor $D$ on 
$Z$ is pseudo-effective (PSEF) over $U$ if the restriction of $D$ to the generic 
fiber is pseudo-effective. 
\begin{lemma} Assume that $\omega'>2$ and consider the pseudo-effective 
threshold of $K_Y+\Gamma$ over $U$ with respect to $H$ 
    \begin{align*} \tau:=\inf\{t>0|K_Y+\Gamma+tH\ {\rm is\ PSEF\ over}\ B\}. 
    \end{align*}
Then $1\geq\tau\geq 1-\frac{2}{\omega'}>0.$
\end{lemma}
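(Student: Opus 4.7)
The plan is to work directly from the identity
\begin{align*}
K_Y+\Gamma+tH\sim_\QQ (t-1)H+R,
\end{align*}
obtained by rearranging $K_Y+\Gamma-R\sim_\QQ\pi^*K_X=-H$, and to test pseudo-effectivity of the right hand side by restricting to a general fiber $Y_b$ of $f:Y\to U$.

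For the upper bound, setting $t=1$ gives $K_Y+\Gamma+H\sim_\QQ R\geq 0$, which is effective and in particular PSEF over $U$, so $\tau\leq 1$.

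For the lower bound, I will show that for every $s>2/\omega'$ the divisor $K_Y+\Gamma+(1-s)H\sim_\QQ R-sH$ fails to be PSEF over $U$. The key observation is that on a general fiber $Y_b$ the restriction $H|_{Y_b}$ is ample: by the definition of a covering family of tigers, $\pi|_{Y_b}:Y_b\to\pi(Y_b)\subseteq X$ is finite, and $-K_X$ is ample on $X$, so $H|_{Y_b}=(\pi|_{Y_b})^*(-K_X)$ is ample on the surface $Y_b$, and in particular $(H|_{Y_b})^2>0$. By the remark following Lemma \ref{Tech}, $\pi^*(\Delta_p+\Delta_q)-R\geq 0$ is effective (recall that after restricting to $U$ the divisor $R$ is purely horizontal), and $\pi^*(\Delta_p+\Delta_q)\sim_\QQ \tfrac{2}{\omega'}H$. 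Restricting to $Y_b$ for $b$ general (so $Y_b$ is not contained in the relevant support), this yields $R|_{Y_b}\cdot H|_{Y_b}\leq \tfrac{2}{\omega'}(H|_{Y_b})^2$, and therefore
\begin{align*}
(R-sH)|_{Y_b}\cdot H|_{Y_b}\leq\Bigl(\tfrac{2}{\omega'}-s\Bigr)(H|_{Y_b})^2<0.
\end{align*}
A divisor with strictly negative intersection against an ample class on a projective surface cannot be PSEF, so $(R-sH)|_{Y_b}$ is not PSEF, and consequently $K_Y+\Gamma+(1-s)H$ is not PSEF over $U$. This gives $\tau\geq 1-2/\omega'$, and the hypothesis $\omega'>2$ makes this strictly positive.

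I do not expect a serious obstacle at this step: the real content has already been absorbed into Lemma \ref{Tech} and its remark, which ensure that on the modified $\QQ$-factorial birational model over $U$ the divisor $R$ has only horizontal components and is pointwise dominated by $\pi^*(\Delta_p+\Delta_q)\sim_\QQ\tfrac{2}{\omega'}H$. The present lemma is essentially the numerical shadow of those facts, with the fiberwise ampleness of $H|_{Y_b}$ allowing us to detect non-pseudo-effectivity by an intersection number.
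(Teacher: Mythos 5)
Your proof is correct and follows essentially the same route as the paper: both start from $K_Y+\Gamma+tH\sim_\QQ R-(1-t)H$, invoke the inequality $\frac{2}{\omega'}H\sim_\QQ\pi^*(\Delta_p+\Delta_q)\geq R_{\rm d}$ from the remark after Lemma~\ref{Tech}, and detect failure of pseudo-effectivity by an intersection computation on a general fiber $Y_b$. One small point: after the replacement performed in Lemma~\ref{Tech} the morphism $\pi|_{Y_b}$ may be only generically finite onto its image, so $H|_{Y_b}$ is nef and big rather than ample, but your argument only uses that a PSEF class pairs non-negatively with a nef class and that $(H|_{Y_b})^2>0$, so the conclusion stands.
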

\begin{proof} Since $K_Y+\Gamma+H\sim_\QQ R\geq0$, the first inequality is clear. 
Restricting to a general fiber $Y_u$ of $Y$ over $U$, we have
       \begin{align*} (K_Y+\Gamma+\tau H)|_{Y_u}=&(R-(1-\tau)H)|_{Y_u} \\
                                                =&(R_d-\frac{2}{\omega'}H)|_{Y_u}-(1-\tau-\frac{2}{\omega'})H|_{Y_u} 
       \end{align*}
which can not be PSEF if $\omega'>2$ and $\tau<1-\frac{2}{\omega'}$ 
since the first term is non-positive by \eqref{41} and the second term is negative. 
\end{proof}

Now we run a relative minimal model program with scaling for the covering 
family of tigers $f:Y\rightarrow U$. Since $(Y,\Gamma)$ is $\epsilon/2$-klt 
and $H$ is semiample and big, we may assume that $(Y,\Gamma+\tau' H)$ 
remains $\epsilon/2$-klt for any rational number $0<\tau'<\tau$. By \cite{BCHMI}, a 
relative minimal model program of $(K_Y+\Gamma+\tau'H)$ with scaling of $H$ 
over $U$ terminates with a relative Mori fiber space $Y'\rightarrow T$ over $U$ 
with $\dim Y'>\dim T\geq\dim U$. Denote the induced 
maps by $g:Y\dashrightarrow Y'$, $\psi:Y'\rightarrow T$, and $\phi:Y'\rightarrow U$. 
We obtain the following diagram, 
\begin{center}
\begin{tikzpicture}
\node (A) at (-2,1) {$X$};
\node (B) at (0,1) {$Y$};
\node (D) at (2,1) {$Y'$};
\node (C) at (0,-1){$U$};
\node (E) at (2,-1){$T.$};
\node (F) at (-0.60,-1){$B\supseteq$};
\path[->] (B) edge node[above]{$\pi$}(A);
\path[->] (B) edge node[left]{$f$}(C);
\path[dashed][->] (B) edge node[above]{$g$}(D);
\path[->] (D) edge node[left]{$\phi$}(C);
\path[->] (D) edge node[right]{$\psi$}(E);
\path[->] (E) edge node[left]{}(C);
\end{tikzpicture}
\end{center}
For a general fiber $Y'_t$ of $\psi:Y'\rightarrow T$, by construction, the Picard number 
$\rho(Y'_t)=1$ and the divisor $-(K_{Y'}+\Gamma'_d)|_{Y'_t}\sim_\QQ (H'-R_d)|_{Y'_t}$ on $Y'_t$
is ample. 

\begin{lemma}\label{Exc2} There exists a divisor $E'$ on $Y'$ which is 
exceptional over $X$ and dominates $T$. 
\end{lemma}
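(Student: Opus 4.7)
The plan is to adapt the proof of Lemma \ref{Exc} to the new setting, replacing the pair $(Y,B)$ by $(Y',T)$ and exploiting the $\QQ$-factoriality of $X$, the condition $\rho(X)=1$, and the negativity lemma. I will argue by contradiction: suppose that no $\pi'$-exceptional divisor on $Y'$ dominates $T$. Let $E'_1,\dots,E'_k$ be the $\pi'$-exceptional prime divisors on $Y'$ and set $T_j:=\psi(E'_j)$; by hypothesis each $T_j$ is a proper subvariety of $T$.

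First I would choose a sufficiently ample Cartier divisor $A_T$ on $T$ whose support contains none of the finitely many $T_j$; such a general ample divisor exists. Setting $A_{Y'}:=\psi^*A_T$, no component of $A_{Y'}$ is $\pi'$-exceptional, so $A_X:=\pi'_*A_{Y'}$ is a nonzero effective Weil divisor on $X$. Since $X$ is $\QQ$-factorial and $\rho(X)=1$, $A_X$ is ample and $\QQ$-Cartier, so $\pi'^*A_X$ is well defined and big. Next I would write $\pi'^*A_X = A_{Y'}+G$ with $G$ a $\pi'$-exceptional divisor. Note that $A_{Y'}=\psi^*A_T$ is $\pi'$-nef: for any curve $C$ contracted by $\pi'$, either $\psi_*C=0$ (giving $A_{Y'}\cdot C=0$) or $\psi_*C$ is a curve and $A_{Y'}\cdot C=A_T\cdot\psi_*C\geq 0$ by ampleness of $A_T$. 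Since $-G=A_{Y'}-\pi'^*A_X$ is then $\pi'$-nef and $\pi'_*G=0$, the negativity lemma \cite[Lemma 3.39]{KM:bgav} forces $G\geq 0$.

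By the contradiction hypothesis, every component of $G$ is mapped by $\psi$ to a proper subvariety of $T$, hence is contained with some multiplicity in a pullback divisor from $T$. Collecting these, we may find an effective divisor $H$ on $T$ with $G\leq\psi^*H$, and therefore
\begin{equation*}
\pi'^*A_X \;\leq\; A_{Y'}+\psi^*H \;=\; \psi^*(A_T+H).
\end{equation*}
Taking volumes on the threefold $Y'$, monotonicity of volume under effective inequality gives
\begin{equation*}
0 \;<\; A_X^3 \;=\; (\pi'^*A_X)^3 \;\leq\; {\rm Vol}(\psi^*(A_T+H)) \;=\; 0,
\end{equation*}
the last equality because $\psi^*(A_T+H)$ is pulled back from $T$, whose dimension is strictly less than $3$. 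This contradicts the ampleness of $A_X$, proving the lemma.

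The main obstacle is the initial selection of $A_T$: I must guarantee that $A_X=\pi'_*\psi^*A_T$ is nonzero, equivalently that $\psi^*A_T$ has at least one non-$\pi'$-exceptional component. This uses exactly the contradiction hypothesis that the finitely many subvarieties $T_j=\psi(E'_j)$ are proper in $T$, allowing a general ample $A_T$ to avoid all of them. Everything else parallels Lemma \ref{Exc} and the standard computation with the negativity lemma.
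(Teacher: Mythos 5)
Your strategy mirrors Lemma \ref{Exc}, which is the right instinct, but the argument has a gap at the very outset: you treat the induced map from $Y'$ to $X$ as a morphism $\pi'$, writing $\pi'^*A_X$, speaking of curves ``contracted by $\pi'$'', and invoking the negativity lemma for $\pi'$. However, $Y'$ is obtained from $Y$ by running a relative minimal model program \emph{over $U$}, not over $X$, and these MMP steps are not, in general, steps over $X$. Indeed, in the MMP with scaling of $H=\pi^*(-K_X)$, every contracted extremal ray $R$ has $H\cdot R>0$, i.e.\ $\pi$ does not contract $R$; so after the first divisorial contraction or flip the induced map $Y'\dashrightarrow X$ is only a birational \emph{rational} map. (This is precisely the difference from Lemma \ref{Exc}, where $\pi\colon Y\to X$ is an honest morphism, which is what makes that argument work.) Once $\pi'$ fails to be a morphism, $\pi'^*A_X$ is undefined, ``$\pi'$-nef'' is meaningless, and the negativity lemma cannot be applied on $Y'$.

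The paper's fix is to compactify $\psi\colon Y'\to T$ over $B$ to $\overline{\psi}\colon\overline{Y'}\to\overline{T}$ and then pass to a common resolution $p\colon W\to X$, $q\colon W\to\overline{Y'}$, where both $p$ and $q$ are genuine morphisms. Setting $A_W=q^*\overline{\psi}^*A_{\overline{T}}$ and $A_X=p_*A_W$, one runs essentially your negativity/volume computation, but on $W$: since $\rho(X)=1$, $A_X$ is ample; $p^*A_X=A_W+E$ with $E\geq 0$ exceptional over $X$ by the negativity lemma applied to $p$; and $q_*(E)\neq 0$ with a component dominating $\overline{T}$, since otherwise $A_X^3=\mathrm{Vol}(p^*A_X)=\mathrm{Vol}(q^*\overline{\psi}^*A_{\overline{T}})=0$. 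So the ingredients you identified — a general ample divisor on the base, $\rho(X)=1$, negativity, and a volume comparison against a pullback from a lower-dimensional variety — are all right, but the computation must be carried out on the common resolution rather than directly on $Y'$.
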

\begin{proof} Recall that there is a natural map $T\rightarrow U\rightarrow B$. 
We can extend $\psi:Y'\rightarrow T$ to $\overline{\psi}:\overline{Y'}\rightarrow\overline{T}$ 
over $B$ where $\overline{(-)}$ stands for a projective compactification of $(-)$. 
Take a common resolution $p:W\rightarrow X$ and $q:W\rightarrow\overline{Y'}$ 
and let $A_{\overline{T}}$ be a sufficiently ample divisor on $\overline{T}$. Let 
$A_{\overline{Y'}}=\overline{\psi}^*A_{\overline{T}}$, $A_W=q^*A_{\overline{Y'}}$, 
and $A_X=p_*A_W$. Then $p^*A_X=A_W+E=q^*A_{\overline{Y'}}+E=q^*\overline{\psi}^*A_{\overline{T}}+E$ 
for an effective divisor $E$ on $W$ which is exceptional over $X$. Since $\rho(X)=1$, 
it follows by the same argument as in Lemma \ref{exc} that one of the 
irreducible components of $E$ maps to a divisor $E'$ on $\overline{Y'}$. 
By the same argument as in Lemma \ref{exc} again, one of the irreducible 
components of the nonzero divisor $q_*(E)$ dominates $\overline{T}$. 
\end{proof}

\begin{proposition}\label{21} If $\dim T=2$, then $\omega'\leq 8/\epsilon+2$.
\end{proposition}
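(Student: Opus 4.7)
The plan is to bracket the intersection number $H' \cdot Y'_t$ (with $H' = \pi'^{*}(-K_X)$ and $Y'_t$ a general fiber of $\psi : Y' \to T$) between two explicit functions of $\omega'$ and $\epsilon$. Since $\dim Y' = 3$, $\dim T = 2$, and $\rho(Y'/T) = 1$, the general fiber $Y'_t$ is a normal projective curve with ample anticanonical, hence $Y'_t \cong \mathbb{P}^1$; moreover, as $Y'$ is normal the singular locus $\operatorname{Sing}(Y')$ has dimension at most one, so its image in the surface $T$ is a proper closed subset and a general $Y'_t$ lies in the smooth locus of $Y'$. We may assume $\omega' > 2$, since otherwise the conclusion is immediate.

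For the upper bound, the Mori fibration structure gives
\[
-(K_{Y'}+\Gamma'_d)|_{Y'_t} \sim_{\QQ} (H'-R'_d)|_{Y'_t}
\]
(the vertical components $\Gamma'_e$, $R'_e$ vanish on $Y'_t$), and this is ample on $\mathbb{P}^1$. Combined with the effectivity of $\Gamma'_d|_{Y'_t}$ this yields $(H'-R'_d)\cdot Y'_t \leq \deg(-K_{Y'_t}) = 2$. Pushing forward the first inequality of (5.2) via the birational map $g: Y \dashrightarrow Y'$ gives $R'_d \cdot Y'_t \leq \tfrac{2}{\omega'} H' \cdot Y'_t$, so combining and rearranging produces $H' \cdot Y'_t \leq \tfrac{2\omega'}{\omega'-2}$. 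For the lower bound, pick by Lemma \ref{Exc2} a $\pi'$-exceptional divisor $E'$ on $Y'$ dominating $T$. The Connectedness Lemma argument of Lemma \ref{Tech} applies on the $\epsilon/2$-klt $\QQ$-factorial model $Y'$ and yields $\tfrac{2}{\omega'} H' \geq \tfrac{\epsilon}{2} E'$. Since $E' \to T$ is generically finite of positive integer degree and $Y'_t$ lies in the smooth locus of $Y'$, we have $E' \cdot Y'_t \geq 1$, so intersecting gives $H' \cdot Y'_t \geq \tfrac{\omega' \epsilon}{4}$. Combining the two bounds gives $\tfrac{\omega'\epsilon}{4} \leq \tfrac{2\omega'}{\omega'-2}$, i.e., $\omega' \leq 8/\epsilon + 2$.

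The main obstacle is verifying that the inequality $\tfrac{2}{\omega'}H \geq \tfrac{\epsilon}{2} E$ of Lemma \ref{Tech} and the relation $\pi^*(\Delta_p+\Delta_q) \geq R_d$ of (5.2) transfer from $Y$ to the MMP output $Y'$ under pushforward by $g$. This requires checking that $g$ sends $H$ to $H'$ (both represent $\pi^{(\prime)*}(-K_X)$) and preserves effective-divisor inequalities, and that the Connectedness Lemma argument of Lemma \ref{two} runs again on the $\epsilon/2$-klt $\QQ$-factorial model $Y'$ for any $\pi'$-exceptional divisor dominating $T$. A secondary, minor verification is the bound $E' \cdot Y'_t \geq 1$, which follows once a general $Y'_t$ is placed in the smooth locus of $Y'$ and one observes that $E' \to T$ has positive integer degree.
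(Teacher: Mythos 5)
Your proof is correct and takes essentially the same approach as the paper: the ingredients are identical (adjunction on the general fiber $Y'_t\cong\mathbb{P}^1$, the bound $R'_d\cdot Y'_t\leq \tfrac{2}{\omega'}H'\cdot Y'_t$ from (5.2), the bound $\tfrac{2}{\omega'}H'\geq\tfrac{\epsilon}{2}E'$ from (5.1), and $E'\cdot Y'_t\geq1$), and you simply reorganize the algebra by isolating $H'\cdot Y'_t$ between two explicit bounds before combining, whereas the paper chains the inequalities directly; the resulting bound $\omega'\leq 8/\epsilon+2$ is the same. The transfer concerns you raise at the end are legitimate but routine — one pushes forward the effective-divisor inequalities of Lemma \ref{Tech} and (5.2) along the MMP $g:Y\dashrightarrow Y'$ rather than rerunning the Connectedness Lemma on $Y'$ — and the paper handles them in the same implicit way.
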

\begin{proof} By Lemma \ref{Exc2}, there exists a divisor $E'$ on $Y'$ which 
is exceptional over $X$ and dominates $T$. Note that $Y'$ is normal and hence 
$\psi({\rm Sing}(Y'))$ is a proper subset of $T$. In particular, a general 
fiber $Y'_t$ of $\psi:Y'\rightarrow T$ is a smooth projective curve and hence 
$E'.Y'_t\geq1$. Since the divisor $-(K_{Y'}+\Gamma'_d)|_{Y'_t}\sim_\QQ (H'-R_d)|_{Y'_t}$ 
is ample, a general fiber $Y'_t$ is a smooth rational curve $\mathbb{P}^1$. 
From \eqref{40}, we know that  
    \begin{align*} \frac{2}{\omega'}H'-\frac{\epsilon}{2}E'\sim_\QQ {\rm effective}. 
    \end{align*}
Also from \eqref{41},
    \begin{align*} -(K_{Y'}+\Gamma').Y'_t=(H'-R').Y'_t=&(1-\frac{2}{\omega'})H'.Y'_t+(\frac{2}{\omega'}H-R').Y'_t \\
                                                      \geq&(1-\frac{2}{\omega'})H'.Y'_t.  
    \end{align*} 
It follows that  
    \begin{align*} 
      \frac{2}{\omega'}\geq\frac{1}{\omega'}(-(K_{Y'}+\Gamma').Y'_t)&\geq\frac{1}{\omega'}(1-\frac{2}{\omega'})H'.Y'_t \\
                                                                   &\geq(1-\frac{2}{\omega'})\frac{\epsilon}{4}E'.Y'_t \\
                                                                   &\geq(1-\frac{2}{\omega'})\frac{\epsilon}{4}
    \end{align*}
where the first inequality follows by the adjunction formula on $\mathbb{P}^1$. 
Hence $\omega'\leq \frac{8}{\epsilon}+2.$
\end{proof}

\begin{proposition}\label{22} If $\dim T=1$, then 
    \begin{align*} \omega'\leq\frac{4M(2,\epsilon)R(2,\epsilon)}{\epsilon}+2 
    \end{align*} 
where $R(2,\epsilon)$ is an upper bound of the Cartier index of $K_S$ for $S$ any $\epsilon/2$-klt log del Pezzo surface of $\rho(S)=1$ 
and $M(2,\epsilon)$ is an upper bound of the volume ${\rm Vol}(-K_S)=K_S^2$ for $S$ any $\epsilon/2$-klt log del Pezzo surface of $\rho(S)=1$. 
\end{proposition}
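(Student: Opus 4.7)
The plan is to reduce the bound on $\omega'$ to a two-dimensional computation carried out on a general fibre $S := Y'_t$ of $\psi:Y'\to T$, on which the invariants $M(2,\epsilon)$ and $R(2,\epsilon)$ apply directly. First I would verify that $S$ is itself an $\epsilon/2$-klt log del Pezzo surface of Picard number one. By construction of the relative Mori fibre space, $\rho(S)=1$ and $-(K_{Y'}+\Gamma'_d)|_S$ is ample; since the normal bundle of a general fibre is trivial, adjunction gives $K_S=K_{Y'}|_S$ and hence $-(K_S+\Gamma'_d|_S)$ is ample. The $\epsilon/2$-klt property of $(Y',\Gamma')$ (Lemma \ref{Tech}) restricts to $(S,\Gamma'_d|_S)$. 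With $\rho(S)=1$, the decomposition $-K_S=-(K_S+\Gamma'_d|_S)+\Gamma'_d|_S$ exhibits $-K_S$ as ample plus effective, so $-K_S$ is big and hence ample. Thus $(-K_S)^2\leq M(2,\epsilon)$ and the Cartier index of $K_S$ is at most $R(2,\epsilon)$.

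The second step is to convert the $\pi'$-exceptional divisor $E'$ provided by Lemma \ref{Exc2} into a lower bound on $H'|_S\cdot(-K_S)$. Since $E'$ dominates $T$, its restriction $E'|_S$ is a nonzero effective Weil divisor on $S$. The Cartier index bound ensures $-R(2,\epsilon)K_S$ is Cartier and ample, so the positive integer $-R(2,\epsilon)K_S\cdot E'|_S$ is at least $1$, giving $(-K_S)\cdot E'|_S\geq 1/R(2,\epsilon)$. Restricting \eqref{40} to $S$ yields $\tfrac{2}{\omega'}H'|_S\geq\tfrac{\epsilon}{2}E'|_S$, and intersecting this inequality of effective classes with the ample divisor $-K_S$ produces
\begin{align*}
H'|_S\cdot(-K_S)\geq\frac{\omega'\epsilon}{4\,R(2,\epsilon)}.
\end{align*}

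For the matching upper bound I would invoke the analogue of \eqref{41} on $Y'$, namely $\tfrac{2}{\omega'}H'\geq R'_d$. Combined with the identity $-(K_S+\Gamma'_d|_S)=(H'-R'_d)|_S$ coming from adjunction, this forces $-(K_S+\Gamma'_d|_S)\geq\bigl(1-\tfrac{2}{\omega'}\bigr)H'|_S$ on $S$. Intersecting with the nef divisor $-K_S$ and discarding the non-negative contribution $(-K_S)\cdot\Gamma'_d|_S$ yields
\begin{align*}
M(2,\epsilon)\geq(-K_S)^2\geq(-K_S)\cdot\bigl(-(K_S+\Gamma'_d|_S)\bigr)\geq\bigl(1-\tfrac{2}{\omega'}\bigr)(-K_S)\cdot H'|_S.
\end{align*}
Combining with the lower bound from the previous paragraph rearranges to $\omega'\leq 4M(2,\epsilon)R(2,\epsilon)/\epsilon+2$, as required. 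The main technical hurdle I anticipate is confirming that the inequality \eqref{41}, originally derived on $Y$ in Lemma \ref{Tech}, continues to hold on the MMP output $Y'$; I expect this to follow by re-running the Connectedness Lemma argument of Lemma \ref{two} directly for the birational model $\pi':Y'\to X$, using that $(Y',\Gamma')$ remains $\epsilon/2$-klt and $-\pi'^*(K_X+\Delta_p+\Delta_q)=\bigl(1-\tfrac{2}{\omega'}\bigr)H'$ is still big and nef.
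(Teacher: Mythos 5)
Your proof is correct and follows essentially the same route as the paper: verify that a general fibre $S=Y'_t$ is an $\epsilon/2$-klt del Pezzo surface of Picard number one, restrict the inequalities \eqref{40} and \eqref{41} to $S$, use the Cartier index bound to deduce $E'|_S\cdot(-K_S)\geq 1/R(2,\epsilon)$, and chain with $(-K_S)^2\leq M(2,\epsilon)$. The only organizational difference is that you isolate the intermediate bound on $H'|_S\cdot(-K_S)$, whereas the paper writes the chain of inequalities in one pass; and you flag explicitly the point that \eqref{40}, \eqref{41} must persist on the MMP output $Y'$ — something the paper takes for granted (it follows immediately from pushing forward the effective inequalities under the birational contraction $g:Y\dashrightarrow Y'$, or, as you note, by re-running the connectedness argument directly on $Y'$).
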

\begin{proof} Since $f:Y\rightarrow U$ has connected fibers, $T\cong U$. 
Since $-(K_{Y'}+\Gamma'_d)|_{Y'_u}\sim_\QQ (H'-R_d)|_{Y'_u}$ is ample 
and $\rho(Y'_u)=1$ for a general point $u\in U$, we see that
    \begin{align*} -K_{Y'_u}\sim_\QQ (H'+\Gamma'_d-R_d)|_{Y'_u}    
    \end{align*}
is ample. By Lemma \ref{Exc2}, let $E'$ be a divisor on $Y'$ exceptional 
over $X$ which dominates $U$, then  
    \begin{align*} -K_{Y'_u}\equiv (H'+\Gamma'_d-R_d)|_{Y'_u}\geq(1-\frac{2}{\omega'})H|_{Y'_u}
                   \geq(1-\frac{2}{\omega'})\cdot\frac{\omega'\epsilon}{4}E'_u
    \end{align*}
where the second inequality follows by dropping $\Gamma'_d$ and applying 
\eqref{41} while the last one from \eqref{40}. By intersecting with the ample 
divisor $-K_{Y'_u}$, this implies that 
    \begin{align*} (-K_{Y'_u})^2\geq(\omega'-2)\frac{\epsilon}{4}E'_u.(-K_{Y'_u}). 
    \end{align*}
Now $(Y'_u, \Gamma'_u)$ is an $\epsilon/2$-klt log del-Pezzo surfaces of Picard 
number one. Hence $Y'_u$ is an $\epsilon/2$-klt del-Pezzo surface 
of Picard number $\rho(Y'_u)=1$. By Theorem \ref{I}, $(-K_{Y'_u})^2$ 
is bounded above by a positive number $M(2,\epsilon)$ satisfying 
    \begin{align*} M(2,\epsilon)\leq\max\{64,\frac{16}{\epsilon}+4\}.     
    \end{align*}
Also, by $(\Diamond)$ the Cartier index of $K_{Y'_u}$ has an upper bounded 
    \begin{align*} R(2,\epsilon)\leq r(2,\frac{\epsilon}{2})\leq 2(4/\epsilon)^{128\cdot2^5/\epsilon^5}.
    \end{align*} 
It follows that 
    \begin{align*} M(2,\epsilon)\geq(-K_{Y'_u})^2\geq\frac{1}{R(2,\epsilon)}(\omega'-2)\frac{\epsilon}{4}E'_u.({\rm Ample\ Cartier})
		  \geq\frac{1}{R(2,\epsilon)}(\omega'-2)\frac{\epsilon}{4}
    \end{align*}
and hence we get an upper bound
    \begin{align*} \omega'\leq\frac{4M(2,\epsilon)R(2,\epsilon)}{\epsilon}+2.
    \end{align*} 
\end{proof}

\begin{remark} It has been shown in \cite{Bel} that a klt log del Pezzo 
surface has at most four isolated singularities. Also surface klt singularities 
are classified by Alexeev in \cite{fa3}. Hence we expect that it 
is possible to obtain a better upper bound for $R(2,\epsilon)$ and $M(2,\epsilon)$ 
in Proposition \ref{22}. 
\end{remark}

\begin{theorem}\label{II} Let $(X,\Delta)$ be an $\epsilon$-klt log 
$\QQ$-Fano threefold of $\rho(X)=1$. Then the degree $-K_X^3$ satisfies
    \begin{align*} -K_X^3\leq(\frac{24M(2,\epsilon)R(2,\epsilon)}{\epsilon}+12)^3 
    \end{align*}
where $R(2,\epsilon)$ is an upper bound of the Cartier index of $K_S$ for $S$ any $\epsilon/2$-klt log del Pezzo surface of $\rho(S)=1$ 
and $M(2,\epsilon)$ is an upper bound of the volume ${\rm Vol}(S)=K_S^2$ for $S$ any $\epsilon/2$-klt log del Pezzo surface of $\rho(S)=1$. 
Note that we have $M(2,\epsilon)\leq\max\{64,16/\epsilon+4\}$ from Theorem \ref{I} 
and $R(2,\epsilon)\leq2(4/\epsilon)^{128\cdot2^5/\epsilon^5}$ from $(\Diamond)$. 
\end{theorem}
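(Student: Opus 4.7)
The plan is to deduce Theorem \ref{II} by assembling the case analysis built up in Section 5 and then converting a bound on the weight $\omega$ into a bound on the anticanonical volume. First I would observe that since $\rho(X)=1$ and $-K_X$ is ample, the volume satisfies $-(K_X+\Delta)^3 \leq -K_X^3$ and $X$ itself is $\epsilon$-klt, so it suffices to bound $-K_X^3$ when $\Delta=0$. Write $-K_X^3 > (3\omega)^3$ for some positive rational $\omega$; an upper bound on $\omega$ independent of $X$ will immediately give the claimed cubic bound on $-K_X^3$.

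Next I would apply the construction of Section \ref{const} to produce an integer $m>0$, an affine open $U\subseteq X$, and a nonzero section $s \in H^0(X\times U, \mathrm{pr}_X^*L \otimes \mathcal{I}_{\mathcal{Z}}^{\otimes 6m})$ with associated divisor $F = \mathrm{div}(s)$ such that ${\rm mult}_p F_p/m \geq 6$ for general $p \in U$. The dichotomy in Section 4.2 splits the analysis according to whether every irreducible component $\mathcal{W}$ of $\mathrm{Supp}(F)$ through $\mathcal{Z}$ satisfies ${\rm mult}_\mathcal{W}(F)\leq 3m$ (small multiplicity) or not (big multiplicity). In the small multiplicity case, Proposition \ref{one} directly gives $\omega \leq 8/\epsilon + 4$.

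In the big multiplicity case, Lemma \ref{big1} produces a two-dimensional covering family of tigers $\pi:Y \to X$ with $f:Y\to B$ of weight $\omega' \geq \omega/2$. If $\pi$ is not birational, the already-proven proposition of Section \ref{IV.2} handles this to give $\omega \leq 24/\epsilon + 12$. Otherwise $\pi$ is birational; I would replace $Y$ by the $\QQ$-factorial $\epsilon/2$-klt model from Lemma \ref{Tech}, then run a relative MMP of $(K_Y + \Gamma + \tau' H)$ with scaling of $H$ over the open $U \subseteq B$, ending in a relative Mori fiber space $\psi:Y' \to T$ over $U$. Since $\dim T < \dim Y' = 3$ and $\dim T \geq \dim U = 1$, either $\dim T = 2$ or $\dim T = 1$. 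Proposition \ref{21} handles the case $\dim T = 2$ with $\omega' \leq 8/\epsilon + 2$, and Proposition \ref{22} handles $\dim T = 1$ with $\omega' \leq 4M(2,\epsilon)R(2,\epsilon)/\epsilon + 2$.

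Combining $\omega \leq 2\omega'$ in the birational subcases with the bound from Proposition \ref{22} (which dominates the others since $M(2,\epsilon) \geq 1$ and $R(2,\epsilon)\geq 1$), we obtain in every case
\begin{equation*}
\omega \leq \frac{8M(2,\epsilon)R(2,\epsilon)}{\epsilon}+4.
\end{equation*}
Hence the contrapositive of our opening supposition gives
\begin{equation*}
-K_X^3 \leq (3\omega)^3 \leq \left(\frac{24M(2,\epsilon)R(2,\epsilon)}{\epsilon}+12\right)^3,
\end{equation*}
as required. The main obstacles have been addressed in earlier sections: the trickiest step is the birational $\dim T = 1$ case (Proposition \ref{22}), where one must pass to the general fiber $Y'_u$ which is an $\epsilon/2$-klt log del Pezzo surface of Picard number one, apply Theorem A for $M(2,\epsilon)$, and use the Cartier index bound $(\Diamond)$ for $R(2,\epsilon)$; all other cases contribute strictly smaller bounds and do not affect the final inequality. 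Thus the proof is essentially a careful assembly of the propositions already established in this section.
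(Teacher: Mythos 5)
Your proposal is correct and follows essentially the same route as the paper: reduce to $\Delta=0$ and $-K_X^3>(3\omega)^3$, apply the Section \ref{const} construction, split into the small/big multiplicity dichotomy of Section 4.2, handle big multiplicity via Lemma \ref{big1} and (for the birational case) Lemma \ref{Tech} plus the relative Mori fiber space dichotomy on $\dim T$, and then assemble Propositions \ref{one}, \ref{21}, \ref{22} together with the non-birational proposition. The paper's own proof is just the terse line ``Recall that $\omega'\geq\omega/2$. The theorem then follows from Propositions \ref{one}, \ref{21} and \ref{22},'' so you have simply made the implicit assembly explicit. One small point worth tightening: your justification that Proposition \ref{22} dominates the other bounds merely from $M(2,\epsilon)\geq 1$ and $R(2,\epsilon)\geq 1$ does not quite suffice to beat the $24/\epsilon+12$ bound from the non-birational case; one should instead note $M(2,\epsilon)\geq 9$ (since $\mathbb{P}^2$ is a Picard-rank-one del Pezzo surface with $K^2=9$), after which the dominance is immediate.
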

\begin{proof} Recall that $\omega'\geq\omega/2$. The theorem then follows 
from Propositions \ref{one}, \ref{21} and \ref{22}.
\end{proof}

The following example shows that the cone construction analogous to Example 
\ref{guiding} only provides $\epsilon$-klt Fano threefolds with volumes of order 
$1/\epsilon^2$. 
\begin{example}\label{cone'}(Projective cone of projective spaces) For 
$n\geq1$ and $d\geq2$, let $\mathbb{P}^n\hookrightarrow\mathbb{P}^N$ be 
the embedding by $|\mathcal{O}(d)|$ and $X$ be the associated projective 
cone. The projective variety $X$ is normal $\QQ$-factorial of Picard number 
one with unique singularity at the vertex $O$. Also, $X$ admits a resolution 
$\pi:Y=Bl_OX\rightarrow X$ with exceptional divisor $E\cong\mathbb{P}^n$ of 
normal bundle $\mathcal{O}_E(E)\cong\mathcal{O}_{\mathbb{P}^n}(-d)$. The 
variety $Y$ is the projective bundle 
$\mu:Y\cong\mathbb{P}_{\mathbb{P}^n}(\mathcal{O}_{\mathbb{P}^n}\oplus\mathcal{O}_{\mathbb{P}^n}(-d))\rightarrow\mathbb{P}^n$ 
with tautological bundle $\mathcal{O}_Y(1)\cong\mathcal{O}_Y(E)$. We have:
    \begin{itemize}
      \item $\mathcal{O}_E(E)\cong\mathcal{O}_{\mathbb{P}^n}(-d)$ and 
	    hence $E^{n+1}=(-d)^n$;
      \item $K_Y=\pi^*K_X+(-1+\frac{n+1}{d})E$ and hence $X$ is always klt. 
	    Also, $X$ is terminal (resp. canonical) if and only if 
            $n+1>d\geq2$ (resp. $n+1\geq d\geq2$);
      \item $K_Y=\mu^*(K_{\mathbb{P}^n}+\det(\mathcal{E}))\otimes\mathcal{O}_Y(-{\rm rk}(\mathcal{E}))\equiv-(n+1+d)F-2E$ 
	    where the vector bundle $\mathcal{E}=\mathcal{O}_{\mathbb{P}^n}\oplus\mathcal{O}_{\mathbb{P}^n}(-d)$ 
	    and $F=\mu^*\mathcal{O}_\mathbb{P}^n(1)$;
      \item $F^{n+1}=0$ and $F^{n+1-k}.E^{k}=(-d)^{k-1}$ for $1\leq k\leq n+1$;
      \item $K_Y^{n+1}=K_X^{n+1}+(-1+\frac{n+1}{d})^{n+1}E^{n+1}$ and
            \begin{align*} K_Y^{n+1}=&\frac{-1}{d}\sum_{k=1}^{n+1}\left(\begin{array}{c} n+1-k \\ k \end{array}\right)(-1+\frac{n+1}{d})^{n+1-k}(2d)^k\\
                                    =&\frac{-1}{d}((d-n-1)^{n+1}-(-(d+n+1)^{n+1}));
            \end{align*} 
      \item In summary, $-K_X$ is ample with 
            \begin{align*} (-K_X)^{n+1}=\frac{(d+n+1)^{n+1}}{d}.             
            \end{align*}
    \end{itemize}
If $n=2$, then we have an $\epsilon$-klt Fano threefold of Picard number 
one with $\epsilon=1/d$. The volume ${\rm Vol}(X)=(-K_X)^3$ is of order 
$1/\epsilon^2$.
\end{example}

In view of Theorem \ref{II}, it is then interesting to see whether $\epsilon$-klt 
Fano threefolds with big volumes exist.
\begin{qn} Can one find $\epsilon$-klt $\QQ$-factorial $\QQ$-Fano threefolds $X$ 
of $\rho(X)=1$ with volume ${\rm Vol}(X)=(-K_X)^3=O(\frac{1}{\epsilon^c})$ for $c\geq3$?  
\end{qn}
\bibliographystyle{siam}
\bibliography{BAB}

\begin{thebibliography}{10}

\bibitem{fa3}
{\em Flips and abundance for algebraic threefolds}, Soci\'et\'e Math\'ematique
  de France, Paris, 1992.
\newblock Papers from the Second Summer Seminar on Algebraic Geometry held at
  the University of Utah, Salt Lake City, Utah, August 1991, Ast{\'e}risque No.
  211 (1992).

\bibitem{Ale:bs}
{\sc V.~Alexeev}, {\em Boundedness and {$K^2$} for log surfaces}, Internat. J.
  Math., 5 (1994), pp.~779--810.

\bibitem{AM:bs}
{\sc V.~Alexeev and S.~Mori}, {\em Bounding singular surfaces of general type},
  in Algebra, arithmetic and geometry with applications ({W}est {L}afayette,
  {IN}, 2000), Springer, Berlin, 2004, pp.~143--174.

\bibitem{AF:lcc}
{\sc F.~Ambro}, {\em The locus of log canonical singularities}, 1998.
\newblock arXiv.org:math/9806067.

\bibitem{AF:tor}
{\sc F.~Ambro}, {\em On the classification of toric singularities}, in
  Combinatorial aspects of commutative algebra, vol.~502 of Contemp. Math.,
  Amer. Math. Soc., Providence, RI, 2009, pp.~1--3.

\bibitem{Bel}
{\sc G.~N. Belousov}, {\em Del {P}ezzo surfaces with log terminal
  singularities}, Mat. Zametki, 83 (2008), pp.~170--180.

\bibitem{BCHMI}
{\sc C.~Birkar, P.~Cascini, C.~D. Hacon, and J.~McKernan}, {\em Existence of
  minimal models for varieties of log general type}, J. Amer. Math. Soc., 23
  (2010), pp.~405--468.

\bibitem{BSh:acc}
{\sc C.~Birkar and V.~V. Shokurov}, {\em Mld's vs thresholds and flips}, J.
  Reine Angew. Math., 638 (2010), pp.~209--234.

\bibitem{B93:tor}
{\sc A.~A. Borisov and L.~A. Borisov}, {\em Singular toric {F}ano three-folds},
  Mat. Sb., 183 (1992), pp.~134--141.

\bibitem{corti:flip34}
{\sc A.~Corti}, ed., {\em Flips for 3-folds and 4-folds}, vol.~35 of Oxford
  Lecture Series in Mathematics and its Applications, Oxford University Press,
  Oxford, 2007.

\bibitem{H:ag}
{\sc R.~Hartshorne}, {\em Algebraic geometry}, Springer-Verlag, New York, 1977.
\newblock Graduate Texts in Mathematics, No. 52.

\bibitem{Kaw:fano}
{\sc Y.~Kawamata}, {\em Boundedness of {$\bold Q$}-{F}ano threefolds}, in
  Proceedings of the {I}nternational {C}onference on {A}lgebra, {P}art 3
  ({N}ovosibirsk, 1989), vol.~131 of Contemp. Math., Providence, RI, 1992,
  Amer. Math. Soc., pp.~439--445.

\bibitem{Kol:ebf}
{\sc J.~Koll{\'a}r}, {\em Effective base point freeness}, Math. Ann., 296
  (1993), pp.~595--605.

\bibitem{Kol:sop}
\leavevmode\vrule height 2pt depth -1.6pt width 23pt, {\em Singularities of
  pairs}, in Algebraic geometry---{S}anta {C}ruz 1995, vol.~62 of Proc. Sympos.
  Pure Math., Amer. Math. Soc., Providence, RI, 1997, pp.~221--287.

\bibitem{KK:DB}
{\sc J.~Koll{\'a}r and S.~J. Kov{\'a}cs}, {\em Log canonical singularities are
  {D}u {B}ois}, J. Amer. Math. Soc., 23 (2010), pp.~791--813.

\bibitem{KMa:RR}
{\sc J.~Koll{\'a}r and T.~Matsusaka}, {\em Riemann-{R}och type inequalities},
  Amer. J. Math., 105 (1983), pp.~229--252.

\bibitem{KMMc}
{\sc J.~Koll{\'a}r, Y.~Miyaoka, and S.~Mori}, {\em Rational connectedness and
  boundedness of {F}ano manifolds}, J. Differential Geom., 36 (1992),
  pp.~765--779.

\bibitem{KMMT}
{\sc J.~Koll{\'a}r, Y.~Miyaoka, S.~Mori, and H.~Takagi}, {\em Boundedness of
  canonical {$\bold Q$}-{F}ano 3-folds}, Proc. Japan Acad. Ser. A Math. Sci.,
  76 (2000), pp.~73--77.

\bibitem{KM:bgav}
{\sc J.~Koll{\'a}r and S.~Mori}, {\em Birational geometry of algebraic
  varieties}, vol.~134 of Cambridge Tracts in Mathematics, Cambridge University
  Press, Cambridge, 1998.
\newblock With the collaboration of C. H. Clemens and A. Corti, Translated from
  the 1998 Japanese original.

\bibitem{Laz:p1}
{\sc R.~Lazarsfeld}, {\em Positivity in algebraic geometry. {I}}, vol.~48 of
  Ergebnisse der Mathematik und ihrer Grenzgebiete. 3. Folge. A Series of
  Modern Surveys in Mathematics [Results in Mathematics and Related Areas. 3rd
  Series. A Series of Modern Surveys in Mathematics], Springer-Verlag, Berlin,
  2004.
\newblock Classical setting: line bundles and linear series.

\bibitem{Lin}
{\sc J.~Lin}, {\em Birational unboundedness of {$\Bbb Q$}-{F}ano threefolds},
  Int. Math. Res. Not.,  (2003), pp.~301--312.

\bibitem{Mck:fano}
{\sc J.~McKernan}, {\em Boundedness of log terminal fano pairs of bounded
  index}, 2002.
\newblock arXiv.org:math/0205214.

\bibitem{Oka:fano}
{\sc T.~Okada}, {\em On the birational unboundedness of higher dimensional
  {$\Bbb Q$}-{F}ano varieties}, Math. Ann., 345 (2009), pp.~195--212.

\bibitem{Tod}
{\sc G.~T. Todorov}, {\em Pluricanonical maps for threefolds of general type},
  Ann. Inst. Fourier (Grenoble), 57 (2007), pp.~1315--1330.

\end{thebibliography}
\end{document}